\documentclass[11pt,letterpaper]{article}

\usepackage[margin=1in]{geometry}
\usepackage[colorlinks=true,linkcolor=blue,citecolor=blue]{hyperref}
\usepackage{graphicx,subfigure,amsmath,amssymb,amsfonts,bm,epsfig,epsf,url,dsfont,color}
\usepackage{amsthm}
\usepackage{natbib}

\usepackage[utf8]{inputenc} 
\usepackage[T1]{fontenc}    
\usepackage{hyperref}       
\usepackage{url}           
\usepackage{booktabs} 
\usepackage{amsfonts} 
\usepackage{nicefrac}   
\usepackage{microtype}

\usepackage[lined,boxed,ruled,norelsize,algo2e,linesnumbered]{algorithm2e}
\usepackage{amsmath}
\usepackage{amssymb}

\renewcommand{\phi}{\varphi}

\newcommand{\R}{\mathbb{R}}

\newcommand{\cW}{\mathcal{W}}

\newcommand{\cN}{\mathcal{N}}

\def\ds1{\mathds{1}}
\renewcommand{\epsilon}{\varepsilon}

\newcommand{\argmax}{\mathop{\mathrm{arg\,max}}}
\newcommand{\argmin}{\mathop{\mathrm{arg\,min}}}

\renewcommand{\tilde}{\widetilde}

\newlength{\minipagewidth}
\setlength{\minipagewidth}{\textwidth}
\setlength{\fboxsep}{3mm}
\addtolength{\minipagewidth}{-\fboxrule}
\addtolength{\minipagewidth}{-\fboxrule}
\addtolength{\minipagewidth}{-\fboxsep}
\addtolength{\minipagewidth}{-\fboxsep}

\newcommand{\beq}{\begin{equation}}
\newcommand{\eeq}{\end{equation}}

\newcommand{\beqa}{\begin{eqnarray}}
\newcommand{\eeqa}{\end{eqnarray}}

\newcommand{\beqan}{\begin{eqnarray*}}
\newcommand{\eeqan}{\end{eqnarray*}}

\def\ba#1\ea{\begin{align*}#1\end{align*}} 
\def\banum#1\eanum{\begin{align}#1\end{align}}

\newcommand{\norm}[1]{\|#1\|}

\newcommand{\oracle}{\mathcal{T}}
\newcommand{\oracleprox}{\oracle_{\mathrm{prox}}}
\newcommand{\oraclenoiseygrad}{\oracle_{\mathrm{grad}}}
\newcommand{\grad}{\nabla}
\newcommand{\defeq}{\stackrel{\mathrm{{\scriptscriptstyle def}}}{=}}
\newcommand{\normFull}[1]{\left\|#1\right\|}
\newcommand{\E}{\mathbb{E}}
\newcommand{\Prob}{\mathbb{P}}
\newcommand{\op}{\mathrm{op}}	

\usepackage{thm-restate}

\newtheorem{theo}{Theorem}
\newtheorem{theorem}[theo]{Theorem}
\newtheorem{lemma}[theo]{Lemma}
\newtheorem{lem}[theo]{Lemma}
\newtheorem{definition}[theo]{Definition}
\newtheorem{rem}[theo]{Remark}

\usepackage{thmtools}

\usepackage{color}

\title{Complexity of Highly Parallel \\
Non-Smooth Convex Optimization}

\def\And{\and}
\author{
S\'ebastien Bubeck\\
	Microsoft Research\\
	\texttt{sebubeck@microsoft.com}
	\And
	Qijia Jiang \\
	Stanford University \\
	\texttt{qjiang2@stanford.edu}	
	\And
	Yin Tat Lee \thanks{Research supported in part by NSF Awards CCF-1740551, CCF-1749609, and DMS-1839116.}\\
	University of Washington \\ \& Microsoft Research\\
	\texttt{yintat@uw.edu}
	\And
	Yuanzhi Li  \\
	Carnegie Mellon University\\
	\texttt{yuanzhil@andrew.cmu.edu}
	\And
	Aaron Sidford \thanks{Research supported in part by NSF CAREER Award CCF-1844855.}\\
	Stanford University \\
	\texttt{sidford@stanford.edu}
}

\begin{document}

\begin{titlepage}
	\clearpage\maketitle
	\thispagestyle{empty}

\begin{abstract}
A landmark result of non-smooth convex optimization is that gradient descent
 is an optimal algorithm whenever the number of computed gradients is smaller than the dimension $d$. In this paper we study the extension of this result to the parallel optimization setting. Namely we consider optimization algorithms interacting with a highly 
parallel gradient oracle, that is one that can answer $\mathrm{poly}(d)$ gradient queries in parallel. We show that in this case gradient descent is optimal only up to $\tilde{O}(\sqrt{d})$ rounds of interactions with the oracle. The lower bound improves upon a decades old construction by Nemirovski which proves optimality only up to $d^{1/3}$ rounds (as recently observed by Balkanski and Singer), and the suboptimality of gradient descent after $\sqrt{d}$ rounds was already observed by Duchi, Bartlett and Wainwright. In the latter regime we propose a new method with improved complexity, which we conjecture to be optimal. The analysis of this new method is based upon a generalized version of the recent results on optimal acceleration for highly smooth convex optimization.
\end{abstract}

\end{titlepage}

\newpage
\section{Introduction} \label{sec:intro}
Much of the research in convex optimization has focused on the {\em oracle model}, where an algorithm optimizing some objective function $f : \R^d \rightarrow \R$ does so by sequential interaction with, e.g., a gradient oracle
(given a query $x \in \R^d$, the oracle returns $\nabla f(x)$), \citep{NY83, Nes04, Bub15}.\footnote{Throughout we assume that $f$ is differentiable, though our results carry over to the case where $f$ is non-differentiable and given by a sub-gradient oracle. This generalization is immediate as our analysis and algorithms are stable under finite-precision arithmetic and convex functions are almost everywhere differentiable.
} In the early 1990s, 
Arkadi Nemirovski introduced the parallel version of this problem \citep{Nem94}: instead of submitting queries one by one sequentially, the algorithm can submit in parallel up to $Q \geq 1$ queries. We refer to the {\em depth} of such a parallel algorithm as the number of rounds of interaction with the oracle, and the {\em work} as the total number of queries (in particular work $\leq Q \times$ depth). In this paper we study the optimal depth achievable for highly parallel algorithms, namely we consider the regime $Q = \mathrm{poly}(d)$. We focus on non-smooth convex optimization, that is we want to optimize a Lipschitz, convex function $f$ on the unit Euclidean ball.

Our key result is a new form a quadratic acceleration: while for purely sequential methods the critical depth at which one can improve upon local search is $\tilde{O}(d)$, we show that in the highly parallel regime the critical depth is $\tilde{O}(\sqrt{d})$.  

\subsection{Classical optimality results}
Classically, when $Q=1$, it is known that gradient descent's query complexity is {\em order optimal} for any target accuracy $\epsilon$ in the range $\left[ d^{-1/2} , 1 \right]$.
More precisely, it is known that the query complexity of gradient descent is $O(1/\epsilon^2)$ and that for any $\epsilon$ in the range $\left[ d^{-1/2} , 1 \right]$, and for any algorithm, there exists a Lipschitz and convex function $f$ on which the number of oracle queries the algorithm makes to achieve additive $\epsilon$ accuracy is $\Omega(1/\epsilon^2)$.
Furthermore,  whenever $\epsilon$ is smaller than $d^{-1/2}$ there exists a better algorithm (i.e., with smaller depth), namely the center of gravity whose depth is $O(d \log(1/\epsilon))$. Consequently, an alternative formulation of these results is that, for $Q=1$, gradient descent is order optimal if and only if the depth is smaller than $\tilde{O}(d)$. (See previously cited references for the exact statements.)

\subsection{Optimality for highly parallel algorithms}
The main result of this paper
is to show that in the highly parallel regime ($Q= \mathrm{poly}(d)$), gradient descent is order optimal if and only if the depth is smaller than $\tilde{O}(\sqrt{d})$. Thus one has a ``quadratic" improvement over the purely sequential setting in terms of the critical depth at which naive local search becomes suboptimal.

The {\em only if} part of the above statement follows from \cite{DBW12}, where {\em randomized smoothing} with accelerated gradient descent was proposed (henceforth referred to as distributed randomized smoothing \citep{SBBML18}), and shown to achieve depth $d^{1/4} / \epsilon$, which is order better than $1/\epsilon^2$ exactly when the latter is equal to $\sqrt{d}$. 
A first key contribution of our work
is a matching lower bound showing that, when the depth is smaller than $\tilde{O}(\sqrt{d})$, no significant improvement over gradient descent is possible, i.e., $Q=1$ and $Q=\mathrm{poly}(d)$ have essentially the same power. Importantly we note that our lower bound applies to randomized algorithms. The previous state of the art lower bound was that gradient descent is optimal up to depth $\tilde{O}(d^{1/3})$ \citep{BS18}. In fact the construction in the latter paper is exactly the same as the original construction of Nemirovski in \citep{Nem94} (however the final statements are different, as Nemirovski was concerned with an $\ell_{\infty}$ setting instead of $\ell_2$, see also \cite{DG18} for more results about non-Euclidean setups).

A second key contribution of this work is to improve the state of the art complexity of parallel algorithms with depth between $\sqrt{d}$ and $d$. Improving the depth $d^{1/4} / \epsilon$ of \cite{DBW12} was explicitly mentioned as an open problem by \citet{SBBML18}. Leveraging the recent higher order acceleration schemes of \citet{Gasnikov18, JWZ18, BJLLS8}, we propose a new method with depth $d^{1/3} / \epsilon^{2/3}$. This means that for any value of $\epsilon$ in the range $\left[d^{-1}, d^{-1/2} \right]$ there is an algorithm that is order better than {\em both} gradient descent and center of gravity. Moreover we conjecture that the depth $d^{1/3} / \epsilon^{2/3}$ is in fact optimal for any $\epsilon$ in this range. We leave this question, as well as the question of the optimal work among optimal depth algorithms, for future works.

\subsection{Related works}
Though Nemirovski's prescient work stood alone for decades, more recently
the subfield of parallel/distributed optimization is booming,
propelled by problems in machine learning, see e.g., \citep{BPCPE11}. Chief among those problems is how to leverage mini-batches in stochastic gradient descent as efficiently as possible \citep{DGBSX12}. The literature on this topic is sprawling, see for example \citep{DRY18} which studies the total work achievable in parallel stochastic convex optimization, or \citep{ZX18} where the stochastic assumptions are leveraged to take advantage of second order information. More directly related to our work is \citep{Nem94, DG18, BS18} from the lower bound side (we directly improve upon the result in the latter paper), and \citep{DBW12, SBBML18} from the upper bound side (we directly improve upon the depth provided by the algorithms in those works). 

\section{Lower bound} \label{sec:lower}
Fix $\epsilon>0$ such that $1/\epsilon^2 = \tilde{O}(\sqrt{d})$. In this section we construct a random function $f$ such that, for any deterministic algorithm with depth $O(1/\epsilon^2)$ and total work $\mathrm{poly}(d)$, the output point $x$ is such that $\mathbb{E} [ f(x) - f^* ] > \epsilon$, where the expectation is with respect to the random function $f$, and $f^*$ denotes the minimum value of $f$ on the unit centered Euclidean ball. Note that by the minimax theorem, this implies that for any randomized algorithm there exists a deterministic function such that the same conclusion applies.  Formally, we prove the following: 

\begin{restatable}[Lower Bound]{theorem}{lowerbound}
	\label{thm:lower_bound}
	Let $\rho \in (0,1)$ and $C = 12 + 4 \log_d(Q/\rho)$. Further, assume that it holds that $\log(N) N \sqrt{C \log(d) / d} \leq \frac14$ (i.e., $N \lesssim \sqrt{d / \log^3(d)}$). Fix a randomized algorithm that queries at most $Q$ points on the unit ball per iteration (both function value and gradient), and that runs for at most $N$ iterations. Then, with probability at least $1-\rho$, when run on the shielded Nemirovski function $f$ (see Section~\ref{sec:Wall} and Section~\ref{sec:proofLB})) one has for any queried point:
	$f(x) - f^* \geq \frac{1}{4 \sqrt{N}}$.
\end{restatable}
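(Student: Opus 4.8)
The plan is to argue that, whatever the algorithm does, over its $N$ rounds it can ``learn'' at most $N$ of the random directions that define $f$, and that every point it queries is then necessarily $\tfrac1{4\sqrt N}$-suboptimal because the minimizer of $f$ correlates with strictly more directions than $N$ rounds can expose. Since the asserted event has probability $\ge 1-\rho$ over the joint randomness of $f$ and the algorithm, we condition on the algorithm's internal coins and take it to be deterministic. The instance (Sections~\ref{sec:Wall}--\ref{sec:proofLB}) is built from $T=\Theta(N)$ i.i.d.\ uniform directions $v_1,\dots,v_T$ on $\mS^{d-1}$, the Nemirovski function $\Phi(x)=\max_{i\le T}\big(\langle v_i,x\rangle-\delta i\big)$ with $\delta=\Theta(N^{-3/2})$, and a ``wall'' $\cW$; $f$ is assembled from $\Phi$ and $\cW$ so that $f\ge\Phi$ pointwise. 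I will use the following properties established there: \emph{(a)} $f^\ast\le-(3\delta)^{1/3}(1-o(1))=-\Theta(N^{-1/2})$, attained by a point supported on the $\Theta(\delta^{-2/3})=\Theta(N)$ shallowest directions, which is more than $N$ once the prefactor of $\delta$ is fixed; \emph{(b)} a \emph{shielding} property: outside a thin ``corridor'' one has $f=\cW$, and the (sub)gradients of $\cW$ are oblivious to $v_1,\dots,v_T$, while inside the corridor $f=\Phi$ but a query can be ``deep enough'' to activate the linear piece $v_j$ only if $v_1,\dots,v_{j-1}$ have already been revealed.

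\textbf{At most one reveal per round.} Call $i\in[T]$ \emph{revealed} after round $k$ if the $i$-th piece of $\Phi$ was active at some point queried in rounds $1,\dots,k$; let $\mathcal R_k$ be this set and $m_k=|\mathcal R_k|$. I claim $m_N\le N$ with probability $\ge1-\rho$, by induction on $k$. Conditionally on the history and on $\{v_i:i\in\mathcal R_{k-1}\}$, the $\le Q$ round-$k$ queries are fixed unit vectors, and by \emph{(b)} everything returned so far is a function of the revealed directions only, so these queries are independent of the unrevealed $v_i$. For each round-$k$ query $x$ and each unrevealed $i$, $\langle v_i,x\rangle$ is mean-zero, $d^{-1/2}$-scale, and sub-Gaussian, so $\Prob[\,|\langle v_i,x\rangle|>\sqrt{C\log(d)/d}\,]\lesssim d^{-C/2}$; the value $C=12+4\log_d(Q/\rho)$ is calibrated precisely so that a union bound over the $\le NQ$ queries, the $\le T$ directions, and the auxiliary near-orthonormality events $|\langle v_i,v_j\rangle|\le\sqrt{C\log(d)/d}$ fails with probability $\le\rho$. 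On this good event, shielding forbids any round-$k$ query from being deep enough to activate any unrevealed piece other than $v_{m_{k-1}+1}$ (the shallowest one), so $m_k\le m_{k-1}+1$, and iterating gives $m_N\le N$.

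\textbf{From few reveals to the gap.} Fix any queried point $x$, on the good event. If $x$ lies outside the corridor then $f(x)=\cW(x)$ is at least a positive absolute constant and we are done. Otherwise $x$ is effectively confined to $\mathrm{span}\{v_i:i\le m_N\}\subseteq\mathrm{span}\{v_i:i\le N\}$, so $\langle v_{N+1},x\rangle$ is controlled by the concentration and near-orthonormality bounds, whence $f(x)\ge\Phi(x)\ge\langle v_{N+1},x\rangle-\delta(N+1)\ge-\delta(N+1)-O\!\big(\sqrt{N\log(d)/d}\big)$: the algorithm is pinned at the shallowest \emph{un}revealed level $\approx-\delta N$. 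Since the constants are chosen so that $\delta N$ is strictly smaller in magnitude than $(3\delta)^{1/3}$ --- this is where $\delta=\Theta(N^{-3/2})$ with the right prefactor enters; e.g.\ $\delta=N^{-3/2}/3$ makes $(3\delta)^{1/3}-\delta N=\tfrac23 N^{-1/2}$ --- combining with \emph{(a)} gives $f(x)-f^\ast\ge c\,N^{-1/2}-O\!\big(\sqrt{N\log(d)/d}\big)$ for an absolute $c>0$. Finally, the hypothesis $\log(N)\,N\sqrt{C\log(d)/d}\le\tfrac14$ bounds every error accumulated above --- both the $O(\sqrt{N\log(d)/d})$ term and the $O(\log(N)\sqrt{C\log(d)/d})$ slack from the ``at most one extra reveal per round, over $N$ rounds'' union bound --- below $\tfrac1{4\sqrt N}$, so $f(x)-f^\ast\ge\tfrac1{4\sqrt N}$ for every query, as claimed.

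\textbf{Main obstacle.} The technical heart is the wall $\cW$ and property \emph{(b)}: one must exhibit a convex function whose oracle answers carry no information about the $v_i$ (so the conditioning in the second step is legitimate), which nonetheless forces every ``deep'' query within a round into the span of the \emph{already}-revealed directions --- so that a batch of $Q=\mathrm{poly}(d)$ parallel queries still reveals only one new direction, rather than the $\approx\sqrt d$ directions that a clever parallel strategy could otherwise extract in a single round --- and which does not itself open a cheap route to small values of $f$. It is exactly this corridor/shielding mechanism that upgrades Nemirovski's and Balkanski--Singer's $d^{1/3}$ threshold to $\sqrt d$, and establishing it while keeping the error terms small enough to preserve the $\tfrac1{4\sqrt N}$ gap (the role of the stated relation among $N$, $d$, $Q$, $\rho$ through $C$) is the crux of the proof.
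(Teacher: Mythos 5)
Your proposal follows the same high-level architecture as the paper's appendix proof (a good event from spherical concentration with $C$ calibrated for a union bound over $\approx NQ$ queries and $N^2$ directions; an induction that each round reveals at most one new direction thanks to the wall; a final computation pitting $-\gamma\cdot(\text{depth})-\text{fluctuation}$ against $f^*\leq-1/\sqrt N$), but there are two genuine gaps. First, the properties you take as ``established there'' are not the ones the construction provides, and the mechanism you invoke at the end is not available for the paper's function: $f$ is built from \emph{exactly} $N$ random orthonormal vectors, the reference point is $x^*=-\frac1{\sqrt N}\sum_{i\le N}v_i$ with $f(x^*)\le-1/\sqrt N$ (Lemma~\ref{lem:prop1} together with \eqref{eq:nemopt}), and there is no $v_{N+1}$ and no ``optimum supported on more than $N$ directions'' of value $-(3\delta)^{1/3}$. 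The operative reason every queried point is bad is timing, not a surplus of pieces: the round-$t$ queries are issued before any information about $v_t,\ldots,v_N$ exists (in the paper this is enforced by having Player B resample the unrevealed vectors at the start of each round and proving, in Lemma~\ref{lem:game_consistence}, that all answers are consistent with the final function), so even the round-$N$ queries satisfy $|\langle v_N,x\rangle|\le\sqrt{C\log d/d}$ and hence $f(x)\ge \cN(x)\ge v_N\cdot x-\gamma N$. Your step ``$f(x)\ge\langle v_{N+1},x\rangle-\delta(N+1)$'' has no counterpart and must be replaced by this argument; relatedly, your conditioning claim (``everything returned so far is a function of the revealed directions only, so the queries are independent of the unrevealed $v_i$'') is exactly the subtle point the resampling game is introduced to make rigorous, and your sketch does not supply it.

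Second, the quantitative crux is deferred rather than proved. The ``at most one reveal per round'' step for \emph{shallow} queries ($\|z\|\le\delta$, $\delta$ the wall radius) requires the Nemirovski offset to absorb the fluctuations of $\langle v_s,z\rangle$, i.e.\ $\gamma\ge 2\delta\sqrt{C\log(d)/d}$, and Lemma~\ref{lem:prop1} forces $\delta\approx\log_2(1/\delta)\bigl(4C\sqrt{N\log(d)/d}+1/\sqrt N\bigr)\gtrsim\log(N)/\sqrt N$; the paper \emph{defines} $\gamma=2\delta\sqrt{C\log(d)/d}$ and then uses the hypothesis $\log(N)\,N\sqrt{C\log(d)/d}\le\frac14$ to show $\gamma N+\sqrt{C\log(d)/d}\le\frac3{4\sqrt N}$ — this coupling between the offset and the wall radius is precisely where the $\log N$ in the hypothesis is consumed and where the $\sqrt d$ threshold comes from. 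You instead fix the offset independently as $\Theta(N^{-3/2})$ (e.g.\ $N^{-3/2}/3$) and never verify the inequality $\gamma\ge2\delta\sqrt{C\log(d)/d}$; with your prefactor it can fail at the boundary of the stated assumption (one gets $2\delta\sqrt{C\log d/d}$ up to about $\tfrac1{2}N^{-3/2}$ there), so the shielding/consistency step you rely on is not established, and your accounting of where the hypothesis is used (attributing the $\log N$ to a ``union bound slack'' in the value) is off. You candidly flag property (b) as ``the crux,'' but since the appendix proof being asked for is exactly the formalization of that crux (Lemmas~\ref{lem:win_prob} and~\ref{lem:game_consistence}) plus the final constant-tracking, the proposal as written does not yet constitute a proof.
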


The details of the proof of this theorem are deferred to Appendix~\ref{sec:LBapp}. In the remainder of this section we instead provide a sketch of its proof. We first recall in Section \ref{sec:NY} why, for purely sequential algorithms, the above statement holds true, and in fact one can even replace $\sqrt{d}$ by $d$ in this case (this construction goes back to \citep{YN76}, see also \citep{NY83}). Next, in Section \ref{sec:Nem94} we explain \citet{Nem94}'s construction, which yields a weaker version of the above statement, with $\sqrt{d}$ replaced by $d^{1/3}$ (as rediscovered by \citep{BS18}). We then explain in Section \ref{sec:Wall} our key new construction, a type of {\em shielding} operation. Finally, we conclude the proof sketch in Section \ref{sec:proofLB}. 

For the rest of the section we let $v_1, \ldots, v_N$ denote $N$ random orthonormal vectors in $\R^d$ (in particular $N \leq d$), and $x^* = - \frac{1}{\sqrt{N}} \sum_{i=1}^N v_i$. We define the {\em Nemirovski function} with parameter $\gamma \geq 0$ by:
\[
\cN(x) = \max_{i \in [N]} \big\{ v_i \cdot x - i \gamma \big\} \,,
\]
Note that 
\begin{equation} \label{eq:nemopt}
\cN^* \leq \cN(x^*) \leq - \frac{1}{\sqrt{N}} \,.
\end{equation}
In what follows, we restrict the query points to the unit ball for the function under consideration.

\subsection{The classical argument} \label{sec:NY}
We consider here the Nemirovski function with parameter $\gamma = 0$. Each gradient query reveals a single vector in the collection of the $v_i$, so after $N/2$ iterations one might know say $v_1,\ldots, v_{N/2}$, but the rest remain unknown (or in other words they remain random orthonormal vectors in $\mathrm{span}(v_1,\hdots, v_{N/2})^{\perp}$). 
Thus for any output $x$ that depends on only $N/2$ queries, one has $\mathbb{E} [ \cN(x) ] \geq 0$ (formally this inequality follows from Jensen's inequality and the tower rule). Thus, together with \eqref{eq:nemopt}, it follows that $\mathbb{E} [ \cN(x) - \cN^* ] \geq 1/\sqrt{N}$. In other words the best rate of convergence of sequential methods is $1/\sqrt{N}$, provided that $N \leq d$.

\subsection{The basic parallel argument} \label{sec:Nem94}
We consider here the Nemirovski function with parameter $\gamma = C \sqrt{\log(d)/d}$ for some large enough constant $C$ (more precisely that constant $C$ depends on the exponent in the $\mathrm{poly}(d)$ number of allowed queries per round). The key observation is as follows: Imagine that the algorithm has already discovered $v_1,\ldots,v_{i-1}$. Then for any set of $\mathrm{poly}(d)$ queries, with high probability with respect to the random draw of $v_i, \ldots, v_N$, one has that the inner product of any of those vectors with any of the queried points is in $[-\gamma/2, \gamma/2]$ (using both basic concentration of measure on the sphere, and a union bound). Thus the maximum in the definition of $\cN$ is attained at some index $\leq i$. This means that this set of $\mathrm{poly}(d)$ queries can only reveal $v_i$, and not any of the $v_j, j > i$. Thus after $N-1$ rounds we know that with high probability any output $x$ satisfies  
$
\cN(x) \geq v_N \cdot x - N \gamma \geq - (N+1) \gamma$ (since $v_N$ is a random direction orthogonal to $\mathrm{span}(v_1,\hdots,v_{N-1})$ and $x$ only depends on $v_1, \hdots, v_{N-1}$). 
Thus we obtain that the suboptimality gap is $\frac{1}{\sqrt{N}} - (N+1) \gamma$. Let us assume that
\begin{equation} \label{eq:basicond}
N^{3/2} \leq \frac{1}{2 \gamma} \,,
\end{equation} 
i.e., $N= \tilde{O}(d^{1/3})$ (since $\gamma = C \sqrt{\log(d)/d}$) so that $1/\sqrt{N} > (N+1) \gamma$. Then one has that the best rate of convergence with a highly parallel algorithm is $\Omega(1/\sqrt{N})$ (i.e., the same as with purely sequential methods).  

\subsection{The wall function} \label{sec:Wall}
Our new idea to improve upon Nemirovski's construction is to introduce a new random {\em wall function} $\cW$ (with parameter $\delta >0$), where the randomness come from $v_1, \hdots, v_N$. Our new random hard function, which we term {\em shielded-Nemirovski function}, is then defined by:
\[
f(x) = \max \left\{ \cN(x), \cW(x) \right\} \,.
\]
We construct the convex function $\cW$ so that one can essentially repeat the argument of Section \ref{sec:Nem94} with a smaller value of $\gamma$ (the parameter in the Nemirovski function), so that the condition \eqref{eq:basicond} becomes less restrictive and allows to take $N$ as large as $\tilde{O}(\sqrt{d})$.
\newline

Roughly speaking the wall function will satisfy the following properties:
\begin{enumerate}
\item The value of $\cW$ at $x^*$ is small, namely $\cW(x^*) \leq - \frac{1}{\sqrt{N}}$.
\item The value of $\cW$ at ``most" vectors $x$ with $\|x\| \geq \delta$ is large, namely $\cW(x) \geq \cN(x)$, and moreover it is does not depend on the collection $v_i$ (in fact at most points we will have the simple formula $\cW(x) = 2 \|x\|^{1+\alpha}$, for some small $\alpha$ that depends on $\delta$, to be defined later).
\end{enumerate}
The key argument is that, by property $2$, one can expect (roughly) that information on the random collection of $v_i's$ can only be obtained by querying points of norm smaller than $\delta$. This means that one can repeat the argument of Section \ref{sec:Nem94} with a smaller value of $\gamma$, namely $\gamma = \delta \cdot C \sqrt{\log(d)/d}$. In turn the condition \eqref{eq:basicond} now becomes $N = \tilde{O} \left( d^{1/3} / \delta^{2/3} \right)$. Due to convexity of $\cW$, there is a tension between property $1$ and $2$, so that one cannot take $\delta$ too small. We will show below that it is possible to take $\delta = \sqrt{N/d}$. In turn this means that the argument proves that $1/\sqrt{N}$ is the best possible rate, up to $N= \tilde{O}(\sqrt{d})$. 
\newline

The above argument is imprecise because the meaning of ``most" in property $2$ is unclear. A more precise formulation of the required property is as follows:
\begin{enumerate}
\item[$2'$] Let $x = w + z$ with $w\in V_i$ and $z \in V_i^{\perp}$ where $V_i = \mathrm{span}(v_1,\ldots, v_i)$. Assume that $\|z\| \geq \delta$, then the total variation distance between the conditional distribution of $v_{i+1}, \ldots, v_N$ given $\nabla \cW(x)$ (and $\cW(x)$) and the unconditional distribution is polynomially small in $d$ with high probability (here high probability is with respect to the realization of $\nabla \cW(x)$ and $\cW(x)$, see below for an additional comment about such conditional reasoning). Moreover if the argmax in the definition of $\cN(x)$ is attained at some index $>i$, then $\cW(x) \geq \cN(x)$.
\end{enumerate}
Given both property $1$ and $2'$ it is actually easy to formalize the whole argument. We do so by consdering a game between Alice, who is choosing the query points, and Bob who is choosing the random vectors $v_1, \hdots, v_N$. Moreover, to clarify the reasoning about conditional distributions, Bob will resample the vectors $v_i, \hdots, v_N$ at the beginning of the $i^{th}$ round of interaction, so that one explicitly does not have any information about those vectors given the first $i-1$ rounds of interaction. Then we argue that with high probability all the oracle answers remain consistent throughout this resampling process. See Appendix \ref{sec:LBapp} for the details. Next we explain how to build $\cW$ so as to satisfy property 1 and 2'.

\subsection{Building the wall} \label{sec:proofLB}
Let $h(x) = 2 \|x\|^{1+\alpha}$ be the basic building block of the wall. Consider the correlation cones:
\[
C_i = \left\{x \in \R^d : \left| v_i \cdot \frac{x}{\|x\|} \right| \geq C \sqrt{\frac{\log(d)}{d}} \right\} \,.
\]
Note that for any fixed query $x$, the probability (with respect to the random draw of $v_i$) that $x$ is in $C_i$ is polynomially small in $d$. We now define the wall $\cW$ as follows: it is equal to the function $h$ outside of the correlation cones and the ball of radius $\delta$, and it is extended by convexity to the rest of the unit ball. In other words, let $\Omega = \{x \in \R^d : \|x\| \in [\delta,1] \text{ and } x \not\in C_i \text{ for all } i \in [N]\}$, and
\[
\cW(x) = \max_{y \in \Omega} \left\{ h(y) + \nabla h(y) \cdot (x-y) \right\} \,.
\]

Let us first prove property 1:
\begin{lemma} \label{lem:prop1}
Let  $\alpha = \frac{1}{\log_2(1/\delta)} \leq 1$, and $\frac{\delta}{\log_2(1/\delta)} = 4 C  \sqrt{\frac{N \log(d)}{d}} + \frac{1}{\sqrt{N}}$. Then $\cW(x^*) \leq - \frac{1}{\sqrt{N}}$.
\end{lemma}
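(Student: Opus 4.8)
The plan is to unpack the definition $\cW(x^*) = \max_{y\in\Omega}\{h(y) + \nabla h(y)\cdot(x^*-y)\}$ and show that \emph{every} affine term in this maximum is at most $-1/\sqrt N$; then the bound follows termwise (and if $\Omega$ happens to be empty the claim is vacuous, so we never need to locate $x^*$ relative to $\Omega$). First I would compute, for $y$ with $\|y\| = r$ and $u = y/\|y\|$, that $\nabla h(y) = 2(1+\alpha)r^{\alpha-1}y$, hence $\nabla h(y)\cdot y = 2(1+\alpha)r^{1+\alpha}$ and
\[
h(y) + \nabla h(y)\cdot(x^*-y) \;=\; \big(h(y)-\nabla h(y)\cdot y\big) + \nabla h(y)\cdot x^* \;=\; -2\alpha\, r^{1+\alpha} + 2(1+\alpha)\, r^{\alpha-1}\,(y\cdot x^*),
\]
where the leading term is $-2\alpha r^{1+\alpha}$ because $1-(1+\alpha)=-\alpha$.

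The two reductions are then as follows. For $y\in\Omega$ we have $y\notin C_i$ for every $i\in[N]$, i.e.\ $|v_i\cdot u| < C\sqrt{\log(d)/d}$, so the triangle inequality gives $|y\cdot x^*| = r\big|\tfrac{1}{\sqrt N}\sum_i v_i\cdot u\big| \le r\sqrt N\, C\sqrt{\log(d)/d} =: r\beta$ with $\beta := C\sqrt{N\log(d)/d}$. Substituting, it suffices to prove the one–variable inequality $g(r) := -2\alpha r^{1+\alpha} + 2(1+\alpha)\beta r^{\alpha} \le -\tfrac{1}{\sqrt N}$ for all $r\in[\delta,1]$. A short derivative computation gives $g'(r) = 2\alpha(1+\alpha)r^{\alpha-1}(\beta - r)$, so $g$ is unimodal with its maximum at $r=\beta$; and since the hypothesis $\delta/\log_2(1/\delta) = 4\beta + 1/\sqrt N$ combined with $\alpha = 1/\log_2(1/\delta) \le 1$ yields $\delta \ge \alpha\delta = 4\beta + 1/\sqrt N > \beta$, the function $g$ is decreasing on $[\delta,1]$, so it remains only to bound the endpoint value $g(\delta)$.

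The last step rests on the one genuinely slick feature of the parameter choice: because $\alpha = 1/\log_2(1/\delta)$ one has $\delta^\alpha = 2^{(\log_2\delta)/\log_2(1/\delta)} = 2^{-1} = \tfrac12$. Therefore $g(\delta) = \delta^\alpha\big(2(1+\alpha)\beta - 2\alpha\delta\big) = (1+\alpha)\beta - \alpha\delta$, and plugging in $\alpha\delta = 4\beta + 1/\sqrt N$ together with $1+\alpha \le 2$ gives $g(\delta) = (\alpha-3)\beta - 1/\sqrt N \le -2\beta - 1/\sqrt N \le -1/\sqrt N$, completing the proof. I do not anticipate a real obstacle — the argument is a direct calculation — but the points needing care are (a) getting the sign in $h(y)-\nabla h(y)\cdot y$ right so the leading term is the negative $-2\alpha r^{1+\alpha}$, (b) verifying $\delta^\alpha = 1/2$ exactly, which is what makes $g(\delta)$ collapse to a clean expression, and (c) checking that the prescribed relation among $\delta,\alpha,N,d$ is exactly strong enough both to push $\delta$ past the mode $\beta$ of $g$ and to make the final constant land on the correct side of $-1/\sqrt N$.
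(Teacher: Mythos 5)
Your proposal is correct and follows essentially the same route as the paper: the same tangent-plane identity $h(y)+\nabla h(y)\cdot(x^*-y) = -2\alpha\|y\|^{1+\alpha}+2(1+\alpha)\frac{y\cdot x^*}{\|y\|^{1-\alpha}}$, the same correlation bound $|y\cdot x^*|\leq C\sqrt{N\log(d)/d}\,\|y\|$ on $\Omega$, and the same exploitation of $\delta^{\alpha}=\tfrac12$ together with $\alpha\delta = 4C\sqrt{N\log(d)/d}+\tfrac{1}{\sqrt N}$. The only difference is cosmetic: the paper bounds the two terms separately (using $\|y\|\geq\delta$ and $\|y\|\leq 1$), whereas you check monotonicity of the one-variable function on $[\delta,1]$ and evaluate at the endpoint $r=\delta$, which yields the same conclusion.
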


\begin{proof}
One has $\nabla h(y) = 2 (1+\alpha) \frac{y}{\|y\|^{1-\alpha}}$  and thus
\begin{equation} \label{eq:tangent}
 h(y) + \nabla h(y) \cdot (x-y) = - 2 \alpha \|y\|^{1+\alpha} + 2(1+\alpha) \frac{y \cdot x}{\|y\|^{1-\alpha}} \,.
\end{equation}
Moreover for any $y \in \Omega$ one has:
\[
|y \cdot x^*| \leq \frac{1}{\sqrt{N}} \sum_{i=1}^N |y \cdot v_i| \leq C \sqrt{\frac{N \log(d)}{d}}\cdot \|y\| \,.
\]
Thus for any $y \in \Omega$ we have:
\[
 h(y) + \nabla h(y) \cdot (x^*-y) \leq - 2 \alpha \delta^{1+\alpha} + 2 (1+\alpha) C \sqrt{\frac{N \log(d)}{d}} \,.
\]
The proof is straightforwardly concluded by using the values of $\alpha$ and $\delta$.
\end{proof}

Next we prove a simple formula for $\cW(x)$ in the context of property $2'$. More precisely we assume that $x = w + z$ with $w\in V_i$ and $z \in V_i^{\perp}$ with $z \not\in C_j$ for any $j > i$. Note that for any fixed $z$, the latter condition happens with high probability with respect to the random draw of $v_{i+1}, \ldots, v_N$.

\begin{lemma} \label{lem:2prime}
Let $x = w + z$ with $w\in V_i$ and $z \in V_i^{\perp}$ with $z \not\in C_j$ for any $j > i$. Then one has:
\[
\cW(x) = \max_{a, b \in \R_+ : a^2 + b^2 \in [\delta^2,1]} \left\{ - 2 \alpha (a^2 + b^2)^{\frac{1+\alpha}{2}} + 2 \frac{1+\alpha}{(a^2+b^2)^{\frac{1-\alpha}{2}}} \left( \max_{y \in\tilde{\Omega}_{a,b}, \|y\|=a} y \cdot w + b \|z\| \right) \right\} \,,
\]
where 
\[\tilde{\Omega}_{a,b} =  \left\{x \in V_i : \left|v_j\cdot \frac{x}{\|x\|}\right|\cdot \frac{a}{\sqrt{a^2+b^2}} < C\cdot \sqrt{\frac{\log(d)}{d}} \text{ for all } j \in [i]\right\}\]
and we use the convention that the maximum of an empty set is $-\infty$.
\end{lemma}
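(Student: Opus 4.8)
The plan is to unfold the definition $\cW(x) = \max_{y \in \Omega}\{h(y) + \nabla h(y)\cdot(x-y)\}$ using the explicit tangent-plane formula \eqref{eq:tangent}, and then to reorganize the maximization over $y \in \Omega$ according to how $y$ decomposes relative to the splitting $\R^d = V_i \oplus V_i^\perp$. Writing $y = y_w + y_z$ with $y_w \in V_i$ and $y_z \in V_i^\perp$, set $a = \|y_w\|$ and $b' = \|y_z\|$, so that $\|y\|^2 = a^2 + b'^2$. The key point is that $x = w + z$ with $w \in V_i$, $z \in V_i^\perp$, so the inner product $y \cdot x = y_w \cdot w + y_z \cdot z$ splits along the two subspaces. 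Plugging into \eqref{eq:tangent}, the objective becomes
\[
-2\alpha (a^2 + b'^2)^{\frac{1+\alpha}{2}} + \frac{2(1+\alpha)}{(a^2+b'^2)^{\frac{1-\alpha}{2}}}\bigl(y_w \cdot w + y_z \cdot z\bigr),
\]
so for fixed norms $a, b'$ and fixed $y_z$-direction, the only free choices are the direction of $y_w$ within $V_i$ (subject to the cone constraints defining $\Omega$) and the direction of $y_z$ within $V_i^\perp$.

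First I would handle the $V_i^\perp$ component. Since $z \notin C_j$ for all $j > i$ by hypothesis, and the cones $C_j$ for $j \le i$ only constrain directions in $V_i$ (as $v_j \in V_i$ for $j \le i$), the constraint on $y_z$ coming from $\Omega$ is automatically satisfiable: one can freely rotate $y_z$ within $V_i^\perp$ without leaving the cone-complement, provided we stay off the $C_j$ for $j > i$ — but the optimal choice $y_z = b' \cdot z/\|z\|$ is exactly parallel to $z$, which is off those cones by assumption. Hence the inner maximum over the $y_z$-direction is simply $b' \|z\|$, and this choice is feasible. This is the step where the hypothesis $z \notin C_j$, $j > i$, is used: without it, the maximizing direction for $y_z$ might be forced into a correlation cone and the clean formula $b'\|z\|$ would fail.

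Next I would handle the $V_i$ component: with $b'$ and the $y_z$-direction fixed optimally, the remaining maximization is over $y_w \in V_i$ with $\|y_w\| = a$ subject to the cone constraints $|v_j \cdot y/\|y\|| < C\sqrt{\log(d)/d}$ for all $j \in [i]$ (the constraints for $j > i$ being already accounted for). Since $v_j \cdot y = v_j \cdot y_w$ for $j \le i$ and $\|y\| = \sqrt{a^2 + b'^2}$, the constraint reads $|v_j \cdot y_w/\|y_w\|| \cdot a/\sqrt{a^2+b'^2} < C\sqrt{\log(d)/d}$, which is precisely the definition of $\tilde\Omega_{a,b}$ (with $b \leftarrow b'$). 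So the inner maximum over $y_w$ is $\max_{y \in \tilde\Omega_{a,b},\,\|y\|=a} y \cdot w$, with the empty-set convention giving $-\infty$ when no feasible direction exists (in which case that $(a,b)$ pair contributes nothing). Finally, renaming $b' \to b$ and noting that $y \in \Omega$ also requires $\|y\| \in [\delta,1]$, i.e. $a^2+b^2 \in [\delta^2,1]$, and that $a, b \ge 0$, assembling the two inner maxima into the outer max over $(a,b)$ yields exactly the claimed expression. The only real subtlety to check carefully is the feasibility/consistency of the cone constraints across the two subspaces — i.e. that fixing $y_z \parallel z$ does not interact with the $j \le i$ constraints (it does not, since those involve only $y_w$) and that the scaling factor $a/\sqrt{a^2+b^2}$ is correctly the ratio $\|y_w\|/\|y\|$; everything else is bookkeeping of the decomposition.
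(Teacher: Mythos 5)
Your proposal is correct and follows essentially the same route as the paper: decompose $y \in \Omega$ via its projections onto $V_i$ and $V_i^{\perp}$ with fixed norms $a,b$, observe that the $j\le i$ cone constraints involve only the $V_i$-component (rescaled by $a/\sqrt{a^2+b^2}$, giving $\tilde{\Omega}_{a,b}$), bound the $V_i^\perp$ contribution by $b\|z\|$ via Cauchy--Schwarz, and attain it with $y = y' + b\,z/\|z\|$, whose feasibility is exactly where the hypothesis $z \notin C_j$ for $j>i$ enters. The handling of the empty-set convention and the range constraint $a^2+b^2\in[\delta^2,1]$ also matches the paper's argument.
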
 
\begin{proof}
Recall \eqref{eq:tangent}, and let us optimize over $y \in \Omega$ subject to $\|P_{V_i} y\| =a$ and $\|P_{V_i^{\perp}} y\| =b$ for some $a, b$ such that $a^2 + b^2 \in [\delta^2,1]$. Note that in fact there is an upper bound constraint on $a$ for such a $y$ to exists (for if the projection of $y$ onto $V_i$ is large, then necessarily $y$ must be in one of the correlation cones), which we can ignore thanks to the convention choice for the maximum of an empty set. Thus the only calculation we have to do is to verify that:
\[
\max_{y \in \Omega : \|P_{V_i} y\| =a \text{ and } \|P_{V_i^{\perp}} y\| =b} y \cdot x = \max_{y \in\tilde{\Omega}_{a,b}, \|y\|=a} y \cdot w + b \|z\| \,.
\]
Note that $y \cdot x = P_{V_i} y \cdot w + P_{V_i^{\perp}} y \cdot z$. Thus the right hand side is clearly an upper bound on the left hand side (note that $P_{V_i} y \in \tilde{\Omega}_{a,b}$). To see that it is also a lower bound take $y = y' + b \frac{z}{\|z\|}$ for some arbitrary $y' \in \tilde{\Omega}_{a,b}$ with $\|y'\| = a$, and note that $y \in \Omega$ (in particular using the assumption on $z$) with $\|P_{V_i} y\| =a$ and $\|P_{V_i^{\perp}} y\| =b$.
\end{proof}
The key point of the formula given in Lemma \ref{lem:2prime} is that it does not depend on $v_{i+1}, \hdots, v_N$. Thus when the algorithm queries the point $x$ and obtains the above value for $\cW(x)$ (and the corresponding gradient), the only information that it obtains is that $z \not\in C_j$ for any $j >i$. Since the latter condition holds with high probability, the algorithm essentially learns nothing (more precisely the conditional distribution of $v_{i+1}, \hdots, v_N$ only changes by $1/\mathrm{poly}(d)$ compared to the unconditional distribution).

Thus to complete the proof of property $2'$ it only remains to show that if $\|z\| \geq \delta$ and the argmax in $\cN(x)$ is attained at an index $>i$, then the formula in Lemma \ref{lem:2prime} is larger than $\cN(x)$. By taking $a=0$ and $b=\delta$ one obtains that this formula is equal to (using also the values assigned to $\alpha$ in Lemma \ref{lem:prop1}):
\[
- 2 \alpha \delta^{1+\alpha} + 2 \frac{1+\alpha}{\delta^{1-\alpha}} \delta \|z\| = - \alpha \delta + (1+\alpha) \|z\| \geq \|z\| \,.
\]
On the other hand one has (by assumption that the $\argmax$ index is $>i$)
\[
\cN(x) = \max_{j>i} \{v_j \cdot x - j \gamma\} \leq \|z\| \,.
\]
This concludes the proof of property $2'$, and in turn concludes the proof sketch of our lower bound.

\section{Upper bound} \label{sec:upper}

Here we present our highly parallel optimization procedure. Throughout this section we let $f : \R^d \rightarrow \R$ denote a differentiable $L$-Lipschitz function that obtains its minimum value at $x^* \in \R^d$ with $\|x^*\|_2 \leq R$. The main result of this section is the following theorem, which provides an $\tilde{O}(d^{1/3} / \epsilon^{2/3} )$-depth highly-parallel algorithm that computes an $\epsilon$-optimal point with high probability. 

\begin{theorem}[Highly Parallel Function Minimization]
\label{thm:parallel_min_main}
There is a randomized highly-parallel algorithm which given any differentiable L-Lipschitz $f : \R^d \rightarrow \R$ minimized at $x^*$ with $\|x^*\| \leq R$ computes with probability $1 - \nu$
a point $x \in \R^d$ with $f(x) - f(x^*) \leq \epsilon$ in depth $\tilde{O}(d^{1/3}(LR/\epsilon)^{2/3})$ and work $\tilde{O}(d^{4/3} (LR/\epsilon)^{8/3})$ where $\tilde{O}(\cdot)$ hides factors polylogarithmic in $d$,$\epsilon$,$L$,$R$, and $\nu^{-1}$.
\end{theorem}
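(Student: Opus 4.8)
The plan is to reduce non-smooth minimization to approximately minimizing a smooth surrogate via randomized (ball) smoothing, and then to apply the recent higher-order accelerated method of \citet{Gasnikov18, JWZ18, BJLLS8} to that surrogate in a parallel fashion. Concretely, first I would fix a smoothing radius $r$ and define $f_r(x) = \E_{u \sim B}[f(x + r u)]$, the Gaussian- or uniform-ball-smoothed version of $f$. Standard randomized-smoothing facts give: (i) $\|f_r - f\|_\infty \le L r$, so it suffices to $\epsilon/2$-minimize $f_r$ with $r \asymp \epsilon/L$; and (ii) $f_r$ is differentiable with $\nabla f_r$ having Lipschitz constant (and in fact higher-order derivative bounds) of order $L \sqrt{d}/r^k$ at level $k$ — more precisely the $p$-th derivative of $f_r$ is bounded by roughly $L d^{(p-1)/2}/r^{p-1}$. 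The key point is that a single gradient query to $f_r$ can be simulated to high accuracy in \emph{one parallel round} using $\mathrm{poly}(d)/\epsilon^{O(1)}$ i.i.d. samples of $\nabla f(x + r u)$ (finite-difference / Stokes-type estimators also give higher derivatives), with the sampling error controlled by standard concentration (this is where the work bound $\tilde O(d^{4/3}(LR/\epsilon)^{8/3})$ and the failure probability $\nu$ enter, via a union bound over all rounds and a Chernoff/matrix-concentration argument).

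Second, I would invoke the generalized optimal acceleration result for highly smooth convex functions. A $p$-th order accelerated method minimizing a function whose $p$-th derivative is $M_p$-Lipschitz (or bounded) on a ball of radius $R$ reaches accuracy $\delta$ in $\tilde O\big( (M_p R^{p+1}/\delta)^{2/(3p+1)} \big)$ iterations, each iteration requiring the solution of a (regularized) $p$-th order Taylor-expansion subproblem — which, crucially, can itself be carried out with $\mathrm{poly}(d)$ parallel gradient queries (e.g. by a ball-optimization / binary-search-over-a-ball subroutine as in \citet{BJLLS8}). Plugging in the smoothness parameters of $f_r$ with $r \asymp \epsilon/(LR\cdot(\text{something}))$ and optimizing the free parameter $p$ (one finds the sweet spot at a constant $p$, giving the exponent $1/3$ on $d$ and $2/3$ on $LR/\epsilon$) yields the claimed depth $\tilde O(d^{1/3}(LR/\epsilon)^{2/3})$. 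The per-iteration parallel cost, multiplied by the number of iterations and the number of samples needed per query, gives the stated work bound.

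The hard part — and where I expect the real content to lie — is twofold. First, one must show that the higher-order acceleration scheme is \emph{robust to the stochastic/inexact oracle}: the gradients (and higher derivatives) of $f_r$ are only available up to sampling error, so the acceleration analysis has to be re-run with an approximate oracle, tracking how errors propagate through the Taylor subproblem solves and the momentum steps; getting the error tolerance to scale only polynomially (not exponentially) in the number of iterations is the crux. Second, one must verify that the $p$-th order subproblem itself admits a genuinely parallel, $\mathrm{poly}(d)$-work, low-depth solver with sufficient accuracy — this is not automatic and relies on the specific structure (convexity of the regularized Taylor model, plus a ball-constrained optimization oracle). Assembling these pieces, balancing $r$, $p$, the iteration count, and the sample size, and doing the final union bound over failure events, completes the proof; the remaining calculations (concentration bounds, derivative estimates for $f_r$, arithmetic optimization of exponents) are routine.
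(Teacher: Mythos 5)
There is a genuine gap, and it is exactly the obstruction the paper itself flags: the route you propose --- estimate the order-$p$ Taylor model of the smoothed function by sampling and feed it to the off-the-shelf $p$-th order accelerated methods --- cannot give both the claimed depth and the claimed work. With the smoothed function's derivative scale $M_p \approx L(\sqrt{d}/r)^p$ and $r \approx \epsilon/(\sqrt{d}L)$ (note the Gaussian bias is $\sqrt{d}Lr$, not $Lr$), the $p$-th order rate $(M_pR^{p+1}/\epsilon)^{2/(3p+1)}$ gives depth $\tilde{O}\bigl(d^{p/(3p+1)}(LR/\epsilon)^{2(p+1)/(3p+1)}\bigr)$; the exponents $1/3$ and $2/3$ are attained only as $p \to \infty$, and for any \emph{constant} $p$ the $\epsilon$-dependence is polynomially worse (e.g.\ $p=2$ gives $\epsilon^{-6/7}$), so your claim that ``the sweet spot is at a constant $p$'' is false. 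To get within polylog factors of $d^{1/3}(LR/\epsilon)^{2/3}$ you must take $p$ growing like $\log(d/\epsilon^2)$, but then estimating the order-$p$ Taylor tensors of the convolved function requires $d^{\Omega(p)}$ samples/work --- quasi-polynomial, far exceeding the stated work bound $\tilde{O}(d^{4/3}(LR/\epsilon)^{8/3})$. The paper states precisely this trade-off (depth $\tilde{O}(d^{1/3+c}/\epsilon^{2/3})$ at work $d^{\mathrm{poly}(1/c)}$) as the reason it does \emph{not} follow your route.

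What the paper does instead, and what your proposal is missing, is to replace the Taylor model by a much cheaper local model: a single parallel round of $\tilde{O}(d/\epsilon'^2)$ gradient samples of $f$ is combined (Algorithm~\ref{alg:apx}, Lemma~\ref{lem:uniform-approximation}) into a vector field that uniformly approximates $\nabla g$ on a ball of radius $\tilde{r}$, and the proximal subproblem $\min_y g(y)+\Phi(\|y-c\|)$ with $\omega(s)=4Ls^p/\tilde{r}^{p+1}$ is then solved by plain gradient descent on this field (Algorithm~\ref{alg:apx-2}, Theorem~\ref{thm:optimization_oracle}). This yields an $(\alpha,\delta)$-approximate $\omega$-proximal step oracle whose per-round cost does not blow up with $p$, so $p$ can be taken polylogarithmic; feeding it into the noise-tolerant Monteiro--Svaiter-type acceleration framework with line search (Theorem~\ref{thm:main_acceleration}) gives the stated depth and work simultaneously. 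Your two ``hard parts'' (inexact-oracle robustness and a parallel subproblem solver) are indeed where the content lies, but resolving them forces abandoning the explicit Taylor-expansion oracle in favor of this $\omega$-proximal-oracle formulation; as written, your proof plan does not establish the theorem.
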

\vspace{-2pt}
Our starting point for obtaining this result are the $O(d^{1/4} / \epsilon)$ depth highly parallel algorithms of \citep{DBW12}. This paper considers the convolution of $f$ with simple functions, e.g. Gaussians and uniform distributions, and shows this preserves the convexity and continuity of $f$ while improving the smoothness and thereby enables methods like accelerated gradient descent (AGD) to run efficiently. Since the convolved function can be accessed efficiently in parallel by random sampling, working with the convolved function is  comparable to working with the original function in terms of query depth (up to the sampling error). Consequently, the paper achieves its depth bound by trading off the error induced by convolution with the depth improvements gained from stochastic variants of AGD. 

To improve upon this bound, we apply a similar approach of working with the convolution of $f$ with a Gaussian. However, instead of applying standard stochastic AGD we consider accelerated methods which build a more sophisticated model of the convolved function in parallel. Instead of using random sampling to approximate only the gradient of the convolved function, we obtain our improvements by using random sampling to glean more local information with each highly-parallel query and then use this to minimize the convolved function at an accelerated rate.

To enable the use of these more sophisticated models we develop a general acceleration framework that allows us to leverage any subroutine for approximate minimization a local model/approximate gradient computations into an accelerated minimization scheme. We believe this framework is of independent interest, as we show that we can analyze the performance of this method just in terms of simple quantities regarding the local model. This framework is discussed in Section~\ref{sec:acceleration_framework} and in Appendix~\ref{sec:applications} where we show how it generalizes multiple previous results on near-optimal acceleration.

Using this framework, proving Theorem~\ref{thm:parallel_min_main} reduces to showing that we can minimize high quality local models of the convolved function. Interestingly, it is possible to nearly obtain this result by simply random sampling to estimate all derivatives up to some order $k$ and then use this to minimize a regularized $k$-th order Taylor approximation to the function. Near-optimal convergence for such methods under Lipschitz bounds on the $k$-th derivatives were recently given by \citep{Gasnikov18, JWZ18, BJLLS8} (and follow from our framework). This approach can be shown to give a highly-parallel algorithm of depth $\tilde{O}(d^{1/3 + c} / \epsilon^{2/3} )$ for any $c > 0$ (with an appropriately large $k$).  Unfortunately, the work of these methods is $O(d^{\mathrm{poly}(1/c)})$ and expensive for small $c$.

To overcome this limitation, we leverage the full power of our acceleration framework and instead show that we can randomly sample to build a model of the convolved function accurate within a ball of sufficiently large radius. In Section~\ref{sec:highly_parallel_optimization} we bound this quality of approximation and show that this local model can be be optimized to sufficient accuracy efficiently. By combining this result with our framework we prove Theorem~\ref{thm:parallel_min_main}. We believe this demonstrates the utility of our general acceleration scheme and we plan to further explore its implications in future work.

\subsection{Acceleration framework}
\label{sec:acceleration_framework}

Here we provide a general framework for accelerated convex function minimization.
Throughout this section we assume that there is a twice-differentiable convex function $g : \R^d \rightarrow \R$ given by an \emph{approximate proximal step oracle} and an \emph{approximate gradient oracle} defined as follows.

\begin{definition}[Approximate Proximal Step Oracle] 
	\label{def:prox_oracle}
	Let $\omega : \R_+ \rightarrow \R_+$ be a non-decreasing function, $\delta \geq 0$, and $\alpha \in [0,1)$. We call $\oracleprox$ an \emph{$(\alpha, \delta)$-approximate $\omega$-proximal step oracle} for $g : \R^d \rightarrow \R$ if, for all $x \in \R^d$, when queried at $x \in \R^d$ the oracle returns $y = \oracleprox(x)$ such that
	\begin{equation}
	\label{eq:oracle_prox_condition}
	\|\nabla g(y)+ \omega(\|y - x\|) (y - x) \| \leq  \alpha \cdot \omega(\|y - x\|) \|y - x\| + \delta
	~.
	\end{equation}
\end{definition}

\begin{definition}[Approximate Gradient Oracle] 
	\label{def:noisy_gradient}
	We call $\oraclenoiseygrad$ an \emph{$\delta$-approximate gradient oracle} for $g : \R^d \rightarrow \R$ if when queried at $x \in \R^d$ the oracle returns  $v = \oraclenoiseygrad(x)$ such that $\|v - \grad g(x)\| \leq \delta$. 
\end{definition}

We show that there is an efficient accelerated optimization algorithm for minimizing $g$ using only these oracles. Its performance is encapsulated by the following theorem.

\begin{theorem}[Acceleration Framework]
\label{thm:main_acceleration}
	Let $g : \R^d \rightarrow \R$ be a convex twice-differentiable function minimized at $x^*$ with $\|x^*\| \leq R$, $\epsilon > 0$, $\alpha \in [0, 1)$, and $\gamma \geq 1$ such that $128 \alpha \gamma^2 \leq 1$. Further, let $\omega : \R_+ \rightarrow \R_+$ be a monotonically increasing continuously differentiable function with $0 < \omega'(s) \leq \gamma\cdot \omega(s) / s$ for all $s > 0$.  There is an algorithm which for all $k$ computes a point $y_k$ with
\[
g(y_{k})-g^* \leq 
\max \Bigg\{\epsilon ~ , ~
\frac{32\cdot\omega\left(\frac{40\|x^{*}\|}{k^{3/2}}\right) \|x^{*}\|^2}{k^{2}}\Bigg\}
\]
using  $k (6+\log_2[10^{20} \gamma^6 R^2 \cdot \omega(10^5 \gamma^2 R) \cdot \epsilon^{-1}])^2$ queries to a $(\alpha, \delta)$-approximate $\omega$-proximal step oracle for $g$ and a $\delta$-approximate gradient oracle for $g$ provided that both
$\delta \leq \epsilon/[10^{20} \gamma^2 R]$ and $\epsilon \leq 10^{20} \gamma^4 R^3  \omega(80 R)$.
\end{theorem}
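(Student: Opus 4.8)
The plan is to recognize this statement as a Monteiro--Svaiter style accelerated proximal-point method in which the usual constant prox weight $1/\lambda$ is replaced by the state-dependent weight $\omega(\|y-x\|)$ that the oracle of Definition~\ref{def:prox_oracle} implicitly enforces; the higher-order acceleration schemes of \citep{Gasnikov18, JWZ18, BJLLS8} are the special case $\omega(s) \propto s^{p-1}$, and substituting that $\omega$ into the claimed bound reproduces their $1/k^{(3p+1)/2}$ rates, which is a useful calibration check on the exponent $3/2$. Concretely I would maintain output iterates $y_k$ initialized at the origin, ``mirror'' iterates $v_k$ initialized at the origin, and a nondecreasing scalar sequence $A_k$ with $A_0=0$. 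At step $k$ the algorithm selects an increment $a_{k+1}=A_{k+1}-A_k$, forms the convex combination $\tilde x_k = \tfrac{A_k}{A_{k+1}} y_k + \tfrac{a_{k+1}}{A_{k+1}} v_k$, queries $y_{k+1}=\oracleprox(\tilde x_k)$ and $\oraclenoiseygrad(y_{k+1})$, and updates $v_{k+1}=v_k - a_{k+1}\,\oraclenoiseygrad(y_{k+1})$. Write $r_{k+1}=\|y_{k+1}-\tilde x_k\|$ and $w_{k+1}=\omega(r_{k+1})$.

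The core is a potential-decrease lemma for $\Phi_k = A_k\bigl(g(y_k)-g^*\bigr) + \tfrac12\|v_k-x^*\|^2$. Combining convexity of $g$, the three-point identity for the Euclidean proximal update, and the approximate-stationarity bound \eqref{eq:oracle_prox_condition}, one obtains
\[
\Phi_{k+1} \le \Phi_k + (\text{error}) \qquad\text{provided}\qquad a_{k+1}^2\, w_{k+1} \le A_{k+1},
\]
where the error is of order $\alpha\, a_{k+1}^2 w_{k+1} r_{k+1}^2 + \delta\, a_{k+1}\|v_{k+1}-x^*\|$ and is absorbed using $128\alpha\gamma^2\le 1$, the hypothesis $\delta \le \epsilon/[10^{20}\gamma^2 R]$, and an a priori bound $\|v_k-x^*\| = O(\gamma R)$ obtained from the same potential by a short bootstrap. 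The regularity condition $0<\omega'(s)\le \gamma\,\omega(s)/s$ is used precisely to guarantee $\omega(cs)\le c^{\gamma}\omega(s)$, i.e.\ that $\omega$ changes by only a bounded factor over a bounded range of its argument, which is what makes it enough to solve for the step size up to constants. This step is essentially standard acceleration bookkeeping carried through with extra error terms, and I expect it to be routine.

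The main obstacle, and where the $(6+\log_2[\cdots])^2$ query count is spent, is the implicit line search that chooses $a_{k+1}$. The admissible relation $a_{k+1}^2 w_{k+1}\le A_{k+1}$ must hold with $a_{k+1}$ \emph{not too small} (else $A_k$ grows too slowly), yet $w_{k+1}=\omega(r_{k+1})$ depends on $y_{k+1}=\oracleprox(\tilde x_k)$, which depends on $a_{k+1}$ through $\tilde x_k$. I would parametrize the search by a trial step $\lambda$, solve the scalar relation $a^2=\lambda A_{k+1}$ for $a(\lambda)$ and hence $\tilde x_k(\lambda)$, query the oracle, and examine $\lambda\,\omega(r(\lambda))$: the goal is to bracket $\lambda$ with $\lambda\,\omega(r(\lambda))\in[\tfrac14,1]$, which is exactly the admissible step relation up to constants. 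One shows this quantity tends to $0$ as $\lambda\to 0$ and exceeds $1$ once $\lambda$ is comparable to $1/\omega(O(\gamma^2 R))$, so a geometric doubling/halving search over the logarithmic range determined by $R$, $\omega(O(\gamma^2 R))$ and $\epsilon$ brackets an acceptable value; accounting for the overhead of certifying the bracket (including the companion gradient-oracle calls) the cost per outer step is $(6+\log_2[10^{20}\gamma^6 R^2\,\omega(10^5\gamma^2 R)\,\epsilon^{-1}])^2$ queries, for a total of $k$ times that. The hypothesis $\epsilon \le 10^{20}\gamma^4 R^3\,\omega(80R)$ is what makes this logarithmic range nonempty.

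Finally I would close with two telescopes. Summing the potential-decrease lemma from $\Phi_0=\tfrac12\|x^*\|^2$ gives $A_k\bigl(g(y_k)-g^*\bigr) \le \tfrac12\|x^*\|^2 + (\text{accumulated error})$, and the choice of $\delta$ makes the accumulated error, after dividing by $A_k$, at most $\epsilon$, so the bound is $\epsilon$ unless the first term dominates. To lower-bound $A_k$ I would combine (i) the step relation $a_{k+1}\ge \tfrac12\sqrt{A_{k+1}/w_{k+1}}$ from the line search, which yields $\sqrt{A_{k+1}}-\sqrt{A_k}\gtrsim 1/\sqrt{\omega(r_{k+1})}$, and (ii) the nonpositive ``slack'' term in the potential inequality, which telescopes to a budget $\sum_j a_j\, \omega(r_j)\, r_j^2 = O(\|x^*\|^2)$ (here using the prox condition, which forces $\|\oraclenoiseygrad(y_{k+1})\| \approx \omega(r_{k+1})\,r_{k+1}$, and the $v$-update). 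Balancing (i) against the budget (ii) via Hölder's inequality and the $\gamma$-regularity of $\omega$ produces $A_k = \Omega\!\bigl(k^2/\omega(c\|x^*\|/k^{3/2})\bigr)$, the exponent $3/2$ emerging from matching the $k^2$-type growth from (i) against the $O(\|x^*\|^2)$ total-displacement budget, and hence $g(y_k)-g^* \le 32\,\omega(40\|x^*\|/k^{3/2})\,\|x^*\|^2/k^2$, completing the argument. The only genuinely delicate pieces are the existence-and-bracketing proof for the line search and the Hölder balancing that extracts the $k^{3/2}$; the remainder is the standard accelerated-potential analysis with tracked approximation errors.
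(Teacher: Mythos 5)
Your overall architecture matches the paper's: the same Monteiro--Svaiter potential $A_k\,[g(y_k)-g^*]+\tfrac12\|x_k-x^*\|^2$ with tracked oracle errors (Theorem~\ref{thm:convergence} and Lemma~\ref{err:bound}), the admissibility condition $\lambda_{k+1}\omega(\|y_{k+1}-\tilde x_k\|)\in[\tfrac12,1]$ enforced by a binary line search, and an $A_k$ growth bound producing the $k^{3/2}$ argument of $\omega$. (Your $A_k$ lower bound via H\"older balancing is a somewhat different route from the paper's dichotomy in Lemma~\ref{lem:growth_Ak}, which splits into a geometric-growth branch and a ``long stretch where $A$ at most quadruples'' branch; the latter forces many small $\lambda$'s and hence a small displacement $d_t$, and the geometric branch is exactly what the $\epsilon$ term in the max absorbs. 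Your version is plausible for $\gamma$-regular $\omega$ but would need the same kind of case analysis to cover small $k$.)

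The genuine gap is in the line search, precisely the step you defer as ``delicate.'' You assert that a doubling/halving search brackets a $\lambda$ with $\lambda\,\omega(r(\lambda))\in[\tfrac14,1]$ because the quantity tends to $0$ at one end and is large at the other. But $y(\lambda)=\oracleprox(\tilde x(\lambda))$ comes from an $(\alpha,\delta)$-approximate oracle that need not be continuous, monotone, or even consistent across repeated queries, so the certified invariant at termination is only $\zeta(\ell)\ge\tfrac34\ge\zeta(u)$ with $|u-\ell|\le\tau$; without more, $\zeta$ can jump over the whole interval $[\tfrac12,1]$ between $\ell$ and $u$, and no admissible step is produced. The paper's proof hinges on repairing exactly this: it compares the noisy $\zeta$ to the quantity $\zeta^*$ defined through the \emph{exact} proximal map $y^*_\theta=\argmin_y g(y)+W(\|y-\tilde x_\theta\|)$ (Lemmas~\ref{lem:z_approximation} and~\ref{lem:zeta_approximation}, valid only when $\omega(d)\,d\ge c\delta$), bounds the Lipschitz constant of $\log\zeta^*$ on the bracketed range using the upper/lower bounds on $\theta$ (Lemmas~\ref{lem:lipschitz}--\ref{lem:lower_bound_theta}), and then concludes $\zeta(\ell)\le 1$ at termination; the complementary regime $\omega(d)\,d\le c\delta$ is handled by showing the returned point is already $\epsilon$-optimal (Lemma~\ref{lem:z_lower}), which is also where the hypothesis $\delta\lesssim\epsilon/(\gamma^2 R)$ is actually spent. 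Your proposal contains neither the exact-prox comparison nor the near-minimizer escape case, and without them the per-iteration step selection --- and hence the whole query bound --- is not justified.
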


This theorem generalizes multiple accelerated methods (up to polylogarithmic factors) and sheds light on the rates of these methods (See Appendix~\ref{sec:applications} for applications). For example, choosing $\omega(x) \defeq \frac{L}{2}$ and $\oracleprox(x) = x - \frac{1}{L} \grad f(x)$ recovers standard accelerated minimization of $L$-smooth functions, choosing $\omega(x) \defeq \frac{L}{2}$ and $\oracleprox(x) \approx \argmin_{y} g(y) + \frac{L}{2} \|y - x\|^2$ recovers a variant of approximate proximal point \citep{FrostigGKS15} and Catalyst \citep{LinMH15}, and choosing $\omega(x) \defeq \frac{L_p \cdot (p + 1)}{p!} x^{p -1}$ and $\oracleprox(x) = \argmin_{y}\; g_p(y;x) + \frac{L_p}{p!} \|y - x\|^{p + 1}$ where 
$g_p(y;x)$ is the value of the $p$'th order Taylor approximation of $g$ about $x$ evaluated at $y$ recovers highly smooth function minimization \citep{MS13, Gasnikov18, JWZ18, BJLLS8}.

We prove Theorem~\ref{thm:main_acceleration} by generalizing an acceleration framework due to \citep{MS13}. This framework was recently used by several results to obtain near-optimal query complexities for minimizing highly smooth convex functions \citep{Gasnikov18, JWZ18, BJLLS8}. In Section~\ref{sec:algorithm} we provide a variant of this general framework that is amenable to the noise induced by our oracles. In Section~\ref{sec:convergence_with_oracle} we show how to instantiate our framework using the oracles assuming a particular type of line search can be performed. In Section~\ref{sec:put-together} we then prove the Theorem~\ref{thm:main_acceleration}. The algorithm for and analysis of line search is deferred to Appendix~\ref{sec:implementation}.

\subsection{Highly parallel optimization}
\label{sec:highly_parallel_optimization}

With Theorem~\ref{thm:main_acceleration} in hand, to obtain our result we need to provide, for an appropriate function $\omega$, a highly parallel implementation of an approximate proximal step oracle and an approximate gradient oracle for a function that is an $O(\epsilon)$ additive approximation $f$. As with previous work \citep{DBW12,SBBML18} we consider the convolution of $f$ with a Gaussian of covariance $r^{2}\cdot I_d$ for $r > 0$ we will tune later. Formally, we define $g : \R^d \rightarrow \R$ for all $x \in \R^d$ as 
\[
g(x) \defeq \int_{\R^{d}} \gamma_r(y) f(x-y) dy 
\quad \text{where} \quad
\gamma_r (x) \defeq \frac{1}{(\sqrt{2\pi}r)^d}\exp\left(-\frac{\|x\|^2}{2r^2} \right)
\]
It is straightforward to prove (See Section~\ref{sec:convolution}) the following standard facts regarding $g$. 

\begin{lem}
\label{lem:convolution_err} 
The function $g$ is convex, $L$-Lipschitz, and satisfies both $|g(y)-f(y)|\leq\sqrt{d}\cdot Lr$ and $\nabla^{2}g(y)\preceq (L/r) \cdot I_d$ for all $y \in \R^d$. 
\end{lem}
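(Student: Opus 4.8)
The plan is to write the convolution as an expectation, $g(x) = \E[f(x-Y)]$ where $Y$ is a Gaussian random vector with density $\gamma_r$ (so $\E[Y]=0$ and $\E[\|Y\|^2]=dr^2$), and then dispatch the four assertions in turn; the first three are routine and only the Hessian bound requires an idea. For convexity, I would fix $x_0,x_1\in\R^d$ and $\lambda\in[0,1]$: for each realization of $Y$, convexity of $f$ gives $f(\lambda x_0+(1-\lambda)x_1-Y)\leq \lambda f(x_0-Y)+(1-\lambda)f(x_1-Y)$, and taking expectations over $Y$ yields convexity of $g$. For the Lipschitz bound, $|g(x_0)-g(x_1)|\leq \E|f(x_0-Y)-f(x_1-Y)|\leq L\|x_0-x_1\|$. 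For the approximation bound, $|g(y)-f(y)|\leq \E|f(y-Y)-f(y)|\leq L\,\E\|Y\|\leq L\sqrt{\E\|Y\|^2}=Lr\sqrt d$, using Jensen's inequality in the last step. Twice-differentiability of $g$ (needed for the Hessian statement) follows because in $g(y)=\int\gamma_r(y-w)f(w)\,dw$ one may differentiate under the integral sign with the derivatives landing on the smooth Gaussian factor, which is justified by the rapid decay of $\gamma_r$ and its derivatives against the at most linear growth of $f$.

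The Hessian estimate is the only delicate point. We have no second-order control on $f$, so we cannot pass both derivatives onto $f$; but passing both onto $\gamma_r$ produces a bound of order $1/r^2$ instead of $1/r$. The resolution is to split the derivatives: since $f$ is Lipschitz it is differentiable a.e.\ with $\|\nabla f\|\leq L$ a.e., and differentiating $g(y)=\int\gamma_r(z)f(y-z)\,dz$ once under the integral gives $\nabla g(y)=\E[\nabla f(y-Y)]$. Changing variables and then moving the remaining derivative onto the density via $\nabla\gamma_r(z)=-\frac{z}{r^2}\gamma_r(z)$ yields $\nabla^2 g(y)=-\frac{1}{r^2}\,\E[\nabla f(y-Y)\,Y^\top]$. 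Fixing a unit vector $u$ and applying Cauchy--Schwarz,
\[
\bigl|u^\top\nabla^2 g(y)\,u\bigr| = \frac{1}{r^2}\Bigl|\E\bigl[(u\cdot\nabla f(y-Y))\,(u\cdot Y)\bigr]\Bigr| \leq \frac{1}{r^2}\sqrt{\E\bigl[(u\cdot\nabla f(y-Y))^2\bigr]}\,\sqrt{\E\bigl[(u\cdot Y)^2\bigr]} \leq \frac{1}{r^2}\cdot L\cdot r = \frac{L}{r},
\]
since $|u\cdot\nabla f|\leq L$ a.e.\ and $\E[(u\cdot Y)^2]=r^2$. Hence $\nabla^2 g(y)\preceq (L/r)I_d$, and combined with convexity, $0\preceq\nabla^2 g(y)\preceq (L/r)I_d$.

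The main obstacle is thus not difficulty but recognizing the right bookkeeping in the Hessian bound: one should distribute one derivative to $f$ (using only Lipschitzness, i.e.\ $\|\nabla f\|\leq L$) and one to $\gamma_r$, with the crucial $1/r$ scaling emerging from the $L^2$-size $\E[(u\cdot Y)^2]^{1/2}=r$ of the Gaussian's first-order behaviour rather than from any smoothness of $f$. Everything else — convexity, Lipschitzness, the $\sqrt d\,Lr$ estimate, and the standard justification for differentiating under the integral sign — is routine.
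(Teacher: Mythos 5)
Your proposal is correct and follows essentially the same route as the paper: convexity, Lipschitzness, and the $\sqrt{d}\,Lr$ bound via taking expectations over the Gaussian, and the Hessian bound by placing one derivative on $f$ (using $\|\nabla f\|\leq L$) and one on the Gaussian density so that $\nabla^2 g = \nabla\gamma_r * \nabla f$. The only cosmetic difference is the final estimate on the quadratic form, where you use Cauchy--Schwarz with $\E[(u\cdot Y)^2]=r^2$ while the paper bounds $|\langle\nabla f,v\rangle|\leq L$ pointwise and computes $\E|\zeta|=\sqrt{2/\pi}$ for a standard Gaussian; both yield $\nabla^2 g \preceq (L/r) I_d$.
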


Consequently, to minimize $f$ up to $\epsilon$ error, it suffices to minimize $g$ to $O(\epsilon)$ error with $r=O(\frac{\epsilon}{\sqrt{d}L})$. In the remainder of this section we simply show how to provide highly parallel implementations of the requisite oracles to achieve this by Theorem~\ref{thm:main_acceleration}.

Now, as we have discussed, one way we could achieve this goal would be to use random sampling to approximate (in parallel) the $k$-th order Taylor approximation to $g$ and minimize a regularization of this function to implement the approximate proximal step oracle. While this procedure is depth-efficient, its work is quite large. Instead, we provide a more work efficient application of our acceleration framework. To implement a query to the oracles at some point $c \in \R^d$ we instead simply take multiple samples from $\gamma_r(x -c)$, i.e. the normal distribution with covariance $r^{2}I_{d}$ and mean $c$, and use these samples to build an approximation to the gradient field of $g$. The algorithm for this procedure is given by Algorithm~\ref{alg:apx} and carefully combines the gradients of the sampled points to build a model with small bias and variance. By concentration bound and $\epsilon$-net argument, we can show that
of Algorithm~\ref{alg:apx} outputs a vector field $v : \R^d \rightarrow \R^d$ that is an uniform approximation of $\nabla g$ within a small ball (See Section~\ref{sec:noisy_gradient} for the proof.)

\begin{algorithm2e}[H]
	
\caption{Compute vector field approximating $\nabla g$\label{alg:apx}}
\SetAlgoLined

\textbf{Input}: Number of samples $N$, radius $r > 0$, error parameter $\eta \in (0, 1)$, center $c \in \R^d$. 

Sample $x_{1},x_{2},\cdots$, $x_{N}$ independently from $\gamma{}_{r}(x-c)$.

\Return $v : \R^d \rightarrow \R^d$ defined for all $y \in \R^d$ by 
\[
v(y)=\frac{1}{N}\sum_{i=1}^{N}\frac{\gamma_{r}(y-x_{i})}{\gamma_{r}(c-x_{i})}\cdot\nabla f(x_{i})\cdot\chi((x_{i}-c)^{\top}(y-c))\cdot1_{\|x_{i}-c\|\leq(\sqrt{d}+\frac{1}{\eta})r}
\]
where $\chi(t) \defeq 0$, if $|t|\geq r^{2}$, $\chi(t) \defeq 1$ if $|t|\leq\frac{r^{2}}{2}$ and $\chi(t) \defeq 
2-\frac{2|t|}{r^{2}}$ otherwise.
\end{algorithm2e}
\begin{lem}[Uniform Approximation]
	\label{lem:uniform-approximation}
	Algorithm~\ref{alg:apx} outputs vector field $v : \R^d \rightarrow \R$ such that for any $\delta \in (0, \frac{1}{2})$ with probability at least $1 - \delta$ the following holds
	\[
	\max_{y:\|y-c\|\leq\frac{\eta}{4}r}\|v(y)-\nabla g(y)\|\leq5L\cdot\exp\left(-\frac{1}{2\eta^{2}}\right)+\frac{8L}{\sqrt{N}}\sqrt{\frac{d}{\eta^{2}}\log(9d)+\log\frac{1}{\delta}}
	\]
	Consequently, for any $\epsilon \in [0,1]$, $N=\mathcal{O}([d\log d\log(\frac{1}{\epsilon})+\log(\frac{1}{\delta})]\epsilon^{-2})$, and 
	$\eta=\frac{1}{2\sqrt{\log\left(\frac{10}{\epsilon}\right)}}$ this yields that $\max_{y:\|y-c\|\leq\tilde{r}}\|v(y)-\nabla g(y)\|\leq L\cdot\epsilon$ where $\tilde{r}=\frac{r}{8\sqrt{\log\left(\frac{10}{\epsilon}\right)}} $. 
\end{lem}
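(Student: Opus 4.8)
The plan is to recognize Algorithm~\ref{alg:apx} as a truncated importance-sampling estimator of $\nabla g$ and to control its bias and its fluctuations separately. Since $\nabla g(y)=\int\gamma_r(z-y)\nabla f(z)\,dz=\E_{z\sim N(y,r^2 I_d)}[\nabla f(z)]$ while the $x_i$ are drawn from the density $\gamma_r(\cdot-c)$ of $N(c,r^2 I_d)$, the factor $\gamma_r(y-x_i)/\gamma_r(c-x_i)$ is exactly the likelihood ratio; concretely $\gamma_r(x_i-c)\cdot\gamma_r(y-x_i)/\gamma_r(c-x_i)=\gamma_r(x_i-y)$, equivalently $\gamma_r(y-x_i)/\gamma_r(c-x_i)=\exp\!\left(\frac{(x_i-c)^\top(y-c)}{r^2}-\frac{\|y-c\|^2}{2r^2}\right)$. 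Hence, dropping the two truncation factors, each summand would be unbiased for $\nabla g(y)$; and because $\chi((x_i-c)^\top(y-c))$ is supported on $\{|(x_i-c)^\top(y-c)|<r^2\}$, where the reweighting factor is at most $e$, every summand $v_i(y)$ satisfies $\|v_i(y)\|\le eL$ using $\|\nabla f\|\le L$ from Lemma~\ref{lem:convolution_err}. Write $v(y)=\frac1N\sum_{i=1}^N v_i(y)$ with the $v_i(\cdot)$ i.i.d.

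First I would bound the bias. Fix $y$ with $\|y-c\|\le\frac\eta4 r$; by the change of measure, $\E[v_i(y)]=\E_{z\sim N(y,r^2 I_d)}$ of $\nabla f(z)$ times the two truncation factors, so $\|\E[v_i(y)]-\nabla g(y)\|\le L\cdot\Pr_{z\sim N(y,r^2 I_d)}\big[\chi((z-c)^\top(y-c))\cdot 1_{\|z-c\|\le(\sqrt d+1/\eta)r}\ne 1\big]$. Writing $z=y+r\xi$ with $\xi\sim N(0,I_d)$: the event $\|z-c\|>(\sqrt d+1/\eta)r$ forces $\|\xi\|>\sqrt d+\tfrac1\eta-\tfrac\eta4$, which by concentration of $\|\xi\|$ about $\E\|\xi\|\le\sqrt d$ has probability $\le\exp\!\big(-\tfrac12(\tfrac1\eta-\tfrac\eta4)^2\big)$; and since $(z-c)^\top(y-c)=\|y-c\|^2+r\,\xi^\top(y-c)$ with $\xi^\top(y-c)\sim N(0,\|y-c\|^2)$ and $\|y-c\|\le\tfrac\eta4 r$, the event $|(z-c)^\top(y-c)|\ge r^2/2$ has probability $\le 2\exp(-1/(2\eta^2))$. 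A union bound together with bookkeeping of constants (this is where the specific truncation radii $(\sqrt d+1/\eta)r$ and $r^2/2$ matter) gives $\|\E[v_i(y)]-\nabla g(y)\|\le 5L\exp(-1/(2\eta^2))$ uniformly over the ball, which is the first term of the claim.

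Next I would handle the fluctuations uniformly in $y$. For a \emph{fixed} $y$, $v(y)-\E v(y)$ is an average of $N$ i.i.d.\ mean-zero vectors of norm $\le eL$, so a standard vector concentration bound (bounded differences applied to $\|v(y)-\E v(y)\|$, whose mean is $\le eL/\sqrt N$; or a $\tfrac12$-net of $S^{d-1}$ with scalar Hoeffding on each $u^\top v_i(y)$) gives $\|v(y)-\E v(y)\|\le\frac{cL}{\sqrt N}\big(1+\sqrt{\log(1/\delta')}\big)$ with probability $\ge1-\delta'$. To upgrade to a sup over the ball of radius $\tfrac\eta4 r$, I would use that the indicator $1_{\|x_i-c\|\le(\sqrt d+1/\eta)r}$ makes each $y\mapsto v_i(y)$ deterministically $O\!\big(L(\sqrt d+1/\eta)/r\big)$-Lipschitz there (differentiating the Gaussian ratio contributes $\|x_i-y\|/r^2$, the piecewise-linear $\chi$ contributes $O(\|x_i-c\|/r^2)$, and the reweighting factor is $\le e$ wherever $\chi\neq0$), and likewise $\E v(\cdot)$; then, taking an $s$-net $\Lambda$ of the ball, one can choose $s$ small enough that $O(L(\sqrt d+1/\eta)s/r)$ is negligible while still $\log|\Lambda|=d\log(1+\tfrac{\eta r}{2s})=O\!\big(\tfrac d{\eta^2}\log(9d)\big)$ — both are possible because the Lipschitz constant is only polynomial whereas $(9d)^{1/\eta^2}$ leaves exponential room — and union-bound the fixed-$y$ estimate over $\Lambda$ with $\delta'=\delta/|\Lambda|$. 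Adding the bias from the previous step yields $\max_{\|y-c\|\le\eta r/4}\|v(y)-\nabla g(y)\|\le 5L\exp(-1/(2\eta^2))+\tfrac{8L}{\sqrt N}\sqrt{\tfrac d{\eta^2}\log(9d)+\log\tfrac1\delta}$ after fixing constants. The corollary is then the substitution $\eta=\tfrac1{2\sqrt{\log(10/\epsilon)}}$ (so $\exp(-1/(2\eta^2))=(\epsilon/10)^2$ and $1/\eta^2=4\log(10/\epsilon)$), $N=\Theta(\epsilon^{-2}[d\log d\log(1/\epsilon)+\log(1/\delta)])$, and $\tilde r=\tfrac\eta4 r=\tfrac r{8\sqrt{\log(10/\epsilon)}}$.

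The main obstacle is the uniform-in-$y$ step: the whole reason Algorithm~\ref{alg:apx} carries the bump $\chi$ and the norm indicator is that the raw importance weights are heavy-tailed, so one must verify that these truncations simultaneously (i) keep each $v_i$ bounded and Lipschitz with only polynomial constants, (ii) introduce bias no larger than $\exp(-\Theta(1/\eta^2))$, and (iii) are essentially always inactive under the relevant Gaussian, and then balance the net resolution against the Lipschitz constant so the union bound costs only the advertised $\sqrt{d\log(9d)}/\eta$ inflation. Tracking the three competing scales there — the radius ratio $\eta$, the weight bound $e$ coming from $\chi$, and the $\exp(-\Theta(1/\eta^2))$ tail probabilities — so that the constants land on exactly $5L\exp(-1/(2\eta^2))$ and $8L/\sqrt N$ is the fiddly part; everything else is routine.
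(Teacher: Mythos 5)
Your proposal is correct and follows essentially the same route as the paper: per-sample bias, boundedness, and Jacobian (Lipschitz) estimates for the truncated importance-sampling summands, vector concentration at a fixed $y$, and an $\epsilon$-net over the ball of radius $\eta r/4$ with a union bound, the discretization error being absorbed via the smoothness of $g$ and the Lipschitzness of $v$. The only difference is cosmetic: the paper invokes Pinelis's inequality for the fixed-$y$ concentration where you use bounded differences or a sphere net, which changes nothing structurally.
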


This lemma immediately yields that we can use Algorithm~\ref{alg:apx} to implement a highly-parallel approximate gradient oracle for $g$. Interestingly, it can also be leveraged to implement a highly-parallel approximate proximal step oracle. Formally, we show how to use it to find $y$ such that
\begin{equation}
\nabla g(y)+\omega(\|y-x\|)\cdot(y-x) \approx 0
\quad \text{where} \quad 
\omega(s) \defeq \frac{4Ls^{p}}{\tilde{r}^{p+1}}\label{eq:Lip_omega}
\end{equation}
for some $p$ to be determined later. Ignoring logarithmic factors and supposing for simplicity that $L, R \leq 1$, Theorem~\ref{thm:main_acceleration} shows that by invoking this procedure $\tilde{O}(k) \approx d^{\frac{p + 1}{3p + 4}} \epsilon^{\frac{-2p-2}{3p + 4}}$ times we could achieve function error on the order 
\[
\omega(1/k^{3/2}) / k^2 \approx \tilde{r}^{- (p + 1)} k^{- \frac{3p + 4}{2}}
\approx d^{\frac{p + 1}{2}} \epsilon^{- (p + 1)} k^{- \frac{3p + 4}{2}} \approx \epsilon
\]
and therefore achieve the desired result by setting $p$ to be polylogarithmic in the problem parameters. 

Consequently, we simply need to find $y$ satisfying \eqref{eq:Lip_omega}. The algorithm that achieves this is Algorithm~\ref{alg:apx-2}  which essentially performs gradient descent on
\begin{equation}
g(y)+\Phi(\|y-c\|)
\quad \text{where} \quad 
\Phi(s)=\int_{0}^{s}\omega(t)\cdot t\, dt \label{eq:Lip_Phi} ~.
\end{equation}

The performance of this algorithm is given by Theorem~\ref{thm:optimization_oracle} in Section~\ref{sec:approx_step_implement}.  Combined with all of the above it proves Theorem~\ref{thm:parallel_min_main}, see Section~\ref{sec:parallel_complexity} for the details.

\begin{algorithm2e}[H]
		
		\caption{Approximate minimization of  $g(y)+\Phi(\|y-c\|)$\label{alg:apx-2}}
		
		\SetAlgoLined
		
		\textbf{Input}: center $c \in \R^d$, accuracy $\epsilon$, inner radius $\tilde{r}=\frac{r}{8\sqrt{\log\left(\frac{60}{\epsilon}\right)}}$, 
		and step size $h=\frac{\tilde{r}}{48p\sqrt{d}L}$.
		
		Use Algorithm \ref{alg:apx} to find a vector field $v$ such that
		$
		\max_{y:\|y-c\|\leq\tilde{r}}\|v(y)-\nabla g(y)\|\leq L\cdot\frac{\epsilon}{6}
		 $.		
		 
		$y\leftarrow c$.
		
		\For{$i=1,2,\cdots\infty$}{
			
			$\delta_{y}=v(y)+\omega(\|y-c\|)\cdot(y-c)$ where $\omega$ is
			defined by \eqref{eq:Lip_omega} with $p \geq 1$.
			
			\lIf*{$\|\delta_{y}\|\leq L\cdot\frac{5\epsilon}{6}$}{\Return $y$} \lElse{$y=y-h\cdot\delta_{y}$}

		}

	\end{algorithm2e}

\section*{Acknowledgements}

We thank Ankit Garg, Robin Kothari, Praneeth Netrapalli, and Suhail Sherif for helpful feedback and pointing out an earlier issue in the formula for $\tilde{\Omega}_{a,b}$.

\bibliographystyle{plainnat}
\bibliography{bib}

\appendix

\newpage
\section{Further details on the lower bound} \label{sec:LBapp}
\global\long\def\set#1{\mathcal{#1}}

In this section we prove our main lower bound theorem from Section~\ref{sec:lower} repeated below:

\lowerbound*

To prove Theorem \ref{thm:lower_bound}, we consider the following
game between the algorithm (player A) issuing the queries, and the adversary (player B) building the hard {\em shielded Nemirovski} function $f$ (as defined in Section \ref{sec:Wall} and Section \ref{sec:proofLB}), i.e., player B chooses the orthonormal vectors in the definition of $f$. To make explicit the dependency of the shielded Nemirovski function on the choice of the orthonormal vectors $v_1, \cdots, v_N$, we denote it by $f^{v_1, \cdots, v_N}$ (with similar notation for the Nemirovski function $\cN$ and the wall function $\cW$). We restrict our attention to a deterministic player A and randomized player B, which is without loss of generality thanks to the minimax theorem. The game has $N$ iterations, and at each iteration $t$, players A and B
maintain a common set of orthonormal vectors $\set V_{t}=\{v_{1},v_{2},\cdots,v_{t}\}$,
and common sets of vectors $\set Q_{1},\set Q_{2},\cdots,\set Q_{t}$
where initially $\set Q_{0}=\emptyset$. At each iteration,
\begin{enumerate}
\item Simultaneously:
\begin{enumerate}
\item Player A queries a set of $Q$ points $\set Q_{t}=\{z_{t}^{(1)},\cdots,z_{t}^{(Q)}\}$
inside the unit ball. 
\item Player B randomly sample $N-t+1$ orthonormal vectors $v_{t}^{(t)},v_{t}^{(t+1)},\cdots,v_{t}^{(N)}$
from $\text{span}(\set V_{t-1})^{\bot}$.
\end{enumerate}
\item Player B returns $f^{v_{1},v_{2},\cdots,v_{t},v_{t}^{(t+1)},\cdots,v_{t}^{(N)}}(x)$
and $\nabla f^{v_{1},v_{2},\cdots,v_{t},v_{t}^{(t+1)},\cdots,v_{t}^{(N)}}(x)$
to player $A$ for every $x\in\set Q_{t}$ where $v_{t} :=v_{t}^{(t)}$.
\end{enumerate}
Note that at each iteration Player B answers the query with a different function, however we will show that in fact with high probability all the given answers are consistent with the final function. More precisely let us introduce the high probability event under which we will carry out the proof. We say that Player B wins the game if the following holds:
\[
\forall t\in[N],z\in\set Q_{t},s_{1},s_{2}\geq t,\left|\langle x,v_{s_{1}}^{(s_{2})}\rangle\right|< \sqrt{\frac{C \log d}{d}}\cdot\left\Vert P_{V_{t-1}^{\perp}}x\right\Vert \,.
\]
\begin{lemma}
\label{lem:win_prob}
Let $\rho \in (0,1)$. Assume $N\leq\frac{d}{2}$ and let $C = 12 + 4 \log_d(Q/\rho)$. Then player B wins with
probability at least $1- \rho$.
\end{lemma}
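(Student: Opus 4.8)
The plan is to set up the natural filtration of the game and reduce the winning condition to a single spherical concentration estimate followed by a union bound. Let $\mathcal{F}_{t-1}$ denote the $\sigma$-algebra generated by everything revealed through the end of round $t-1$: the vectors $v_1,\dots,v_{t-1}$, all the resampled frames $v_s^{(\cdot)}$ for $s\le t-1$, and hence (Player A being deterministic) all of Player A's query sets $\mathcal{Q}_1,\dots,\mathcal{Q}_t$. In particular $\mathcal{Q}_t$ is $\mathcal{F}_{t-1}$-measurable. The key structural observation is that for any $s_1\ge t$, conditionally on $\mathcal{F}_{s_1-1}$ the frame $(v_{s_1}^{(s_1)},\dots,v_{s_1}^{(N)})$ has the law of a uniformly random orthonormal frame in $V_{s_1-1}^{\perp}$ (a subspace of dimension $d-s_1+1\ge d-N\ge d/2$ by the hypothesis $N\le d/2$), drawn with fresh randomness; in particular it is independent of $z\in\mathcal{Q}_t$, which is $\mathcal{F}_{s_1-1}$-measurable since $s_1-1\ge t-1$, as is the vector $P_{V_{s_1-1}^{\perp}}z$.

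Given this, I would reduce the event in the winning condition to a fixed-vector-versus-random-direction statement. Since $v_{s_1}^{(s_2)}\in V_{s_1-1}^{\perp}\subseteq V_{t-1}^{\perp}$, we have $\langle z,v_{s_1}^{(s_2)}\rangle=\langle P_{V_{s_1-1}^{\perp}}z,\,v_{s_1}^{(s_2)}\rangle$ and $\|P_{V_{s_1-1}^{\perp}}z\|\le\|P_{V_{t-1}^{\perp}}z\|$, so it suffices to bound, conditionally on $\mathcal{F}_{s_1-1}$,
\[
\Prob\!\left[\,\big|\langle \hat w,\,v_{s_1}^{(s_2)}\rangle\big|\ \ge\ \sqrt{\tfrac{C\log d}{d}}\,\right]\ \le\ d^{-C/4}
\]
(up to a harmless constant), where $\hat w$ is the unit vector in the direction of $P_{V_{s_1-1}^{\perp}}z$ and $v_{s_1}^{(s_2)}$ is uniform on the sphere of a subspace of dimension $m\ge d/2$. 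This is the standard tail bound for one coordinate of a uniform point on $\mathbb{S}^{m-1}$, $\Prob[|\langle u,e\rangle|\ge\tau]\le 2\exp(-m\tau^2/2)$, evaluated at $\tau=\sqrt{C\log d/d}$ and $m\ge d/2$, which yields $2\exp(-C\log d/4)=2d^{-C/4}$. I would then union bound over all relevant triples $(t,z,(s_1,s_2))$ with $t\le s_1\le s_2\le N$ and $z\in\mathcal{Q}_t$: there are at most $N$ values of $t$, at most $Q$ of $z$, and at most $N^2$ of the pair, hence at most $QN^3\le Qd^3$ triples (using $N\le d$; restricting to $s_1\le s_2$ saves a further factor $2$, which together with the sharp spherical tail absorbs the factor $2$ above). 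Thus Player B fails with probability at most $O(Q\,d^{3-C/4})$, and substituting $C=12+4\log_d(Q/\rho)$ makes $3-C/4=-\log_d(Q/\rho)$, so this bound is $O(Q\cdot\rho/Q)=O(\rho)$, giving the claimed $1-\rho$ once the constants are tracked carefully.

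The main obstacle — the one step that genuinely needs care rather than calculation — is the conditioning argument behind the first paragraph: one must verify that, although Player B answers round $t$ with the function $f^{v_1,\dots,v_t,v_t^{(t+1)},\dots,v_t^{(N)}}$ that is re-randomized each round, the query set $\mathcal{Q}_t$ depends only on the frames resampled in rounds $\le t-1$ and never on $v_s^{(\cdot)}$ for $s\ge t$. This is what guarantees that each inner product we must control is exactly of the ``fixed vector against a fresh uniform direction in a subspace of dimension $\ge d/2$'' type, so that the spherical concentration bound applies conditionally and the union bound goes through. Everything after establishing this measurability structure (the concentration estimate and the union bound) is routine.
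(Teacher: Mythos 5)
Your proposal is correct and follows essentially the same route as the paper: reduce each event to a fixed unit vector against a fresh uniform direction in $V_{s_1-1}^{\perp}$ (of dimension $\geq d/2$ since $N \leq d/2$), use $\|P_{V_{s_1-1}^{\perp}}z\| \leq \|P_{V_{t-1}^{\perp}}z\|$, apply the spherical tail bound $2\exp(-m\tau^2/2)$ at $\tau = \sqrt{C\log d/d}$ to get $2d^{-C/4}$, and union bound over at most $N^2$ frames and $NQ$ queries, so that $C = 12 + 4\log_d(Q/\rho)$ gives failure probability at most $Q d^{3-C/4} = \rho$. Your explicit filtration/measurability discussion is the only addition; the paper leaves that conditioning implicit but the argument is otherwise identical.
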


\begin{proof}
For any $s_{1}$ and $s_{2}$, we note that $v_{s_{1}}^{(s_{2})}$
follows the uniform distribution on the unit sphere restricted on
the subspace $V_{s_{1}-1}^{\perp}$. For any $x\in\set Q_{t}$, we
have that
\[
\langle x,v_{s_{1}}^{(s_{2})}\rangle=\langle P_{V_{s_{1}-1}^{\perp}}x,P_{V_{s_{1}-1}^{\perp}}v_{s_{1}}^{(s_{2})}\rangle.
\]
By \cite[Lemma 2.2]{Bal97},
we have that
\[
\mathbb{P}_{v_{s_{1}}^{(s_{2})}}\left(\left|\langle P_{V_{s_{1}-1}^{\perp}}x,P_{V_{s_{1}-1}^{\perp}}v_{s_{1}}^{(s_{2})}\rangle\right|\geq t\cdot\|P_{V_{s_{1}-1}^{\perp}}x\|_{2}\right)\leq2\exp\left(-\dim V_{s_{1}-1}^{\perp}\cdot\frac{t^{2}}{2}\right).
\]
Since $\dim V_{s_{1}-1}^{\perp}=d-s_{1}+1\geq d-N\geq\frac{d}{2}$
and $\|P_{V_{s_{1}-1}^{\perp}}x\|_{2}\leq\|P_{V_{t-1}^{\perp}}x\|_{2}$
(using $t\leq s_{1}$), we have that
\[
\mathbb{P}\left(\left|\langle x,v_{s_{1}}^{(s_{2})}\rangle\right|>\sqrt{\frac{C\log d}{d}}\cdot\left\Vert P_{V_{t-1}^{\perp}}x\right\Vert _{2}\right)\leq2\exp\left(-\frac{d}{2}\cdot\frac{1}{2}\cdot\frac{C\log d}{d}\right)=2d^{-\frac{C}{4}}.
\]
Taking union bound over at most $N^{2}$ pairs of $v_{i}^{(j)}$ and
$NQ$ many $x$, we have that player B wins with
probability at least $1-Q\cdot d^{3-\frac{C}{4}}$, which concludes the proof.
\end{proof}

Next we show that if Player B wins the game, then indeed all answers are consistent with the final function.

\begin{lemma}
\label{lem:game_consistence}Assume player B wins the game and that $\gamma = 2 \delta \sqrt{\frac{C \log(d)}{d}}$.
Then, for all $t\in[N]$ and all $x\in\set Q_{t}$, we have that
\begin{equation}
f^{v_{1},v_{2},\cdots,v_{t},v_{t}^{(t+1)},\cdots,v_{t}^{(N)}}(x)=f^{v_{1},v_{2},\cdots,v_{t},v_{t+1},\cdots,v_{N}}(x)\label{eq:F_consistence}
\end{equation}
and that
\begin{equation}
\nabla f^{v_{1},v_{2},\cdots,v_{t},v_{t}^{(t+1)},\cdots,v_{t}^{(N)}}(x)=\nabla f^{v_{1},v_{2},\cdots,v_{t},v_{t+1},\cdots,v_{N}}(x)\label{eq:F_consistence2}
\end{equation}
\end{lemma}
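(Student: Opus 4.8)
The plan is to prove Lemma~\ref{lem:game_consistence} by showing that, under the winning event, at every queried point $x \in \set Q_t$ the value and gradient of the shielded Nemirovski function depend only on $v_1, \ldots, v_t$ (and on coarse ``membership'' information about the resampled vectors that holds with certainty on the winning event), so that replacing $v_t^{(t+1)}, \ldots, v_t^{(N)}$ by the final $v_{t+1}, \ldots, v_N$ changes nothing. Since $f = \max\{\cN, \cW\}$, it suffices to handle $\cN$ and $\cW$ separately and then check that the $\max$ is resolved the same way in both cases; because each of these functions is either $\cN$ or $\cW$ in a neighborhood of $x$ (the $\max$ is locally one of the two unless we are on the boundary set, which I would handle by noting the functions agree there), the gradient identity \eqref{eq:F_consistence2} follows from \eqref{eq:F_consistence} together with local constancy of the active function.

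First I would analyze $\cN$. Write $x = P_{V_{t-1}} x + P_{V_{t-1}^\perp} x$ and set $z = P_{V_{t-1}^\perp} x$, so $\|z\| = \|P_{V_{t-1}^\perp} x\|$. The winning event gives $|\langle x, v_{s_1}^{(s_2)} \rangle| < \sqrt{C \log d / d}\, \|z\|$ for all $s_1, s_2 \geq t$; in particular for every index $j > t$ we have $|v_j \cdot x| < \sqrt{C\log d/d} \cdot \|z\| \le \sqrt{C\log d/d}$, whether $v_j$ refers to the resampled copy $v_t^{(j)}$ or the final $v_j$. Hence $v_j \cdot x - j\gamma \le \sqrt{C\log d/d} - j \gamma$, and using $\gamma = 2\delta\sqrt{C\log d/d}$ one checks (as in Section~\ref{sec:Nem94}) that this is dominated by the term at index $t$ (or at any index $\le t$ achieving the max), so the $\argmax$ in $\cN(x)$ falls at an index $\le t$; the value and gradient of $\cN$ at $x$ therefore only involve $v_1, \ldots, v_t$, which are common to both functions. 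Moreover $\cN(x) \le \|z\|$ when the max is at index $> t$, which I'll need for the $\cW$ comparison.

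Next, $\cW$: here I invoke Lemma~\ref{lem:2prime}. The winning event, applied with $s_1 = s_2$ ranging over $j > t$, says exactly that $z = P_{V_{t-1}^\perp} x \notin C_j$ for all $j > t$ (this is the $a=0$ version of the cone condition), which is the hypothesis of Lemma~\ref{lem:2prime} with $i = t-1$ (or $i=t$ after including $v_t$; I'd set $i$ so that $V_i = V_{t}$ or $V_{t-1}$ consistently). Lemma~\ref{lem:2prime} then expresses $\cW(x)$ via a formula involving only $w = P_{V_i} x$, $\|z\|$, and $v_1, \ldots, v_i$ — crucially \emph{not} $v_{i+1}, \ldots, v_N$ — so $\cW(x)$ and $\nabla \cW(x)$ are identical for the resampled and final vectors. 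Finally, to resolve the outer $\max$: if the $\cN$-$\argmax$ is at an index $\le t$ then $\cN(x)$ is determined by shared data and we are done directly; if it is at an index $> t$, then $\|z\| \ge$ the relevant cone-avoidance gives $\cN(x) \le \|z\| \le \cW(x)$ (by the $a=0, b=\delta$ lower bound computed at the end of Section~\ref{sec:proofLB}, provided $\|z\| \ge \delta$ — and if $\|z\| < \delta$ one argues $\cN(x)$ is even smaller), so $f(x) = \cW(x)$ under both vector choices, and likewise for the gradient.

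The main obstacle I anticipate is the boundary/tie-breaking bookkeeping: making the gradient statement \eqref{eq:F_consistence2} airtight when $x$ lies on the locus $\{\cN = \cW\}$ or where the $\argmax$ defining $\cN$ or the sup defining $\cW$ is non-unique — strictly speaking $f$ may be non-differentiable there. The clean way around this is the footnote's observation that one may work with subgradients (the whole subdifferential at $x$ is determined by the same shared data), or to note that such ties occur on a measure-zero set and perturb; I would state the lemma's gradient claim as ``$\nabla f$ wherever it exists, and the subdifferential otherwise'' and verify that in every case the object depends only on $v_1,\ldots,v_t$ and the cone-membership facts guaranteed by the winning event. A secondary, more mechanical point is to track the indexing offset between ``after round $t$ we know $v_1,\ldots,v_t$'' and the ``$i$'' in Lemma~\ref{lem:2prime} and in the condition $\|z\|\ge\delta$; this is routine once one fixes the convention that the query in round $t$ is answered with a function whose first $t$ vectors are frozen.
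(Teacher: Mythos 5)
Your route is essentially the paper's: $\cW$-invariance via Lemma~\ref{lem:2prime}, a dominance argument for the Nemirovski part, the comparison $\cW(x)\ge\|z\|\ge\cN(x)$ from the end of Section~\ref{sec:proofLB} when the $\argmax$ sits at a late index, and a perturbation argument for the gradient identity. However, the case analysis for $\cN$ has a concrete soft spot. Your blanket claim that ``the $\argmax$ in $\cN(x)$ falls at an index $\le t$'' is false in general: the winning event only gives $|v_s\cdot x|<\sqrt{C\log d/d}\,\|z\|$ for $s>t$ (and for $v_t$ itself), and comparing $\sqrt{C\log d/d}\,\|z\|-s\gamma$ against the index-$t$ term $v_t\cdot x-t\gamma\ge-\sqrt{C\log d/d}\,\|z\|-t\gamma$ with $\gamma=2\delta\sqrt{C\log d/d}$ only yields dominance when $\|z\|\le\delta(s-t)$, i.e.\ the argument pins the $\argmax$ at an index $\le t$ only in the regime $\|z\|\le\delta$. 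Your later text implicitly acknowledges this by treating the case ``$\argmax$ at index $>t$,'' but the patch you propose for the sub-case $\|z\|<\delta$ (``one argues $\cN(x)$ is even smaller'') does not work as stated: the lower bound $\cW(x)\ge\|z\|$ obtained from $a=0,\ b=\delta$ is only valid for $\|z\|\ge\delta$ (for $\|z\|<\delta$ one only gets $\cW(x)\ge(1+\alpha)\|z\|-\alpha\delta$, and indeed $\cW$ must dip well below $\cN$ near the origin for property~1 to hold), so you cannot conclude $f=\cW$ there.

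The fix is exactly the paper's organization: split on $\|z\|\ge\delta$ versus $\|z\|\le\delta$ rather than on where the $\argmax$ falls. For $\|z\|\le\delta$, the displayed computation (using the winning bound on both $v_t$ and $v_s$, $s>t$, and $\sqrt{C\log d/d}=\gamma/(2\delta)$) shows every index $s>t$ is dominated by the index-$t$ term, so the problematic sub-case is vacuous and $\cN$ itself is unchanged under resampling. For $\|z\|\ge\delta$, every index-$>t$ term of either version of $\cN$ is at most $\|z\|\le\cW(x)$, so $f=\max\{\max_{j\le t}\{v_j\cdot x-j\gamma\},\cW(x)\}$ for both vector choices simultaneously; this also repairs a second small glitch in your case (a), namely that ``$\argmax$ at index $\le t$'' must be arranged for the resampled and the final function at the same time, which your split by $\argmax$ location does not automatically provide. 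With these adjustments your argument coincides with the paper's proof, including the treatment of the gradient by infinitesimal perturbation (the paper simply notes \eqref{eq:F_consistence} persists under such perturbations, which is the same device as your local-constancy/subgradient remark).
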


\begin{proof}
Fix any $t\in[N]$ and any $x \in\set Q_{t}$. Write $x=w+z$ with $w \in V_{t-1}$ and $z \in V_{t-1}^{\perp}$. Since player B wins,
we have that
\[
\left|\langle z,v_{t}^{(s)}\rangle\right|\leq\sqrt{\frac{C\log d}{d}}\cdot\left\Vert z\right\Vert \,,
\]
for all $s\geq t$. Lemma \ref{lem:2prime} shows that 
\begin{equation} \label{eq:wwwapp1}
\cW^{v_{1},v_{2},\cdots,v_{t},v_{t}^{(t+1)},\cdots,v_{t}^{(N)}}(x) = \cW^{v_{1},v_{2},\cdots,v_{t},v_{t+1},\cdots,v_{N}}(x) \,.
\end{equation}

Moreover the equations following Lemma \ref{lem:2prime} show that \eqref{eq:wwwapp1} also holds for the function $f$ itself provided that $\|z\| \geq \delta$ (indeed, as discussed there if the argmax index in the definition of the Nemirovski function is attained at an index $\geq t$ then in fact $f(x) = \cW(x)$, and otherwise the Nemirovski function value itself does not depend on $v_{t}^{(t+1)},\cdots,v_{t}^{(N)}$).

Thus we only need to consider the case where $\|z\| \leq \delta$. In this case we prove that \eqref{eq:wwwapp1} also holds for the Nemirovski function (and thus it also holds for $f$). Indeed for any $s > t$
\begin{align*}
\langle v_{s},x\rangle-\gamma\cdot s & =\langle v_{s},z\rangle -\langle v_{t}, z\rangle+\langle v_{t},x\rangle-\gamma\cdot s\\
 & \leq 2 \sqrt{\frac{C \log d}{d}}\cdot \delta +\langle v_{t},x\rangle-\gamma\cdot s\\
 & \leq\langle v_{t},x\rangle-\gamma\cdot t \,,
\end{align*}
where the last inequality uses that $\sqrt{\frac{C \log d}{d}} = \frac{\gamma}{2 \delta}$. This concludes the proof of  \eqref{eq:F_consistence}. For \eqref{eq:F_consistence2} we simply note that \eqref{eq:F_consistence} remains true for infinitesimal perturbations of $x$.
\end{proof}

Finally we show that no queried point could have a suboptimal gap smaller than $o(1/\sqrt{N})$.

\begin{lemma}
Assume player $B$ wins and that $\log(N) N \sqrt{\frac{C \log(d)}{d}} \leq \frac14$. Then, for all $t \in [N]$ and all $x \in \set Q_t$, we have that
\[
f(x) - f^* \geq \frac{1}{4 \sqrt{N}} \,.
\]
\end{lemma}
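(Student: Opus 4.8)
The plan is to derive the claim from two facts: $f^{*}\le-\tfrac{1}{\sqrt N}$ and $f(x)\ge-\tfrac{3}{4\sqrt N}$ for every queried $x$; subtracting them and using $-f^{*}\ge\tfrac1{\sqrt N}$ yields $f(x)-f^{*}\ge-\tfrac3{4\sqrt N}+\tfrac1{\sqrt N}=\tfrac1{4\sqrt N}$. (Here $f=f^{v_{1},\dots,v_{N}}$ and $f^{*}=\min_{\|x\|\le1}f(x)$; by Lemma~\ref{lem:game_consistence} the value player~A obtains at any $x\in\set Q_{t}$ equals $f(x)$.) The first fact is immediate: $\|x^{*}\|=1$, so $f^{*}\le f(x^{*})=\max\{\cN(x^{*}),\cW(x^{*})\}$, and $\cN(x^{*})\le-\tfrac1{\sqrt N}$ by \eqref{eq:nemopt} (each $v_{i}\cdot x^{*}=-\tfrac1{\sqrt N}$ and $-i\gamma\le0$) while $\cW(x^{*})\le-\tfrac1{\sqrt N}$ by Lemma~\ref{lem:prop1}.

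For the second fact, fix $t\in[N]$ and $x\in\set Q_{t}$ and write $x=w+z$ with $w=P_{V_{t-1}}x$, $z=P_{V_{t-1}^{\perp}}x$. Since player~B wins, the winning inequality with $s_{1}=s_{2}=i$ for each $i\ge t$, together with $v_{i}=v_{i}^{(i)}\in V_{t-1}^{\perp}$ (so $\langle x,v_{i}\rangle=\langle z,v_{i}\rangle$), gives $|\langle z,v_{i}\rangle|<\sqrt{C\log(d)/d}\,\|z\|$, i.e.\ $z\notin C_{i}$ for all $i\ge t$. Hence Lemma~\ref{lem:2prime} applies with index $t-1$, and evaluating its formula at $a=0,b=\delta$ (as in the computation following that lemma, using $\delta^{\alpha}=\tfrac12$) yields $\cW(x)\ge-\alpha\delta+(1+\alpha)\|z\|$; moreover $\cN(x)\ge v_{t}\cdot x-t\gamma=\langle z,v_{t}\rangle-t\gamma>-\sqrt{C\log(d)/d}\,\|z\|-t\gamma$. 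Therefore
\[
f(x)\;\ge\;\max\Big\{\,-\alpha\delta+(1+\alpha)\|z\|\;,\;\;-\sqrt{C\log(d)/d}\,\|z\|-t\gamma\,\Big\}.
\]

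Now split on $\|z\|$ relative to $\tau:=4C\sqrt{N\log(d)/d}+\tfrac1{4\sqrt N}$, recalling $\alpha\delta=4C\sqrt{N\log(d)/d}+\tfrac1{\sqrt N}$ (Lemma~\ref{lem:prop1}). If $\|z\|\ge\tau$, the first term gives $f(x)\ge-\alpha\delta+\|z\|\ge-\tfrac1{\sqrt N}+\tfrac1{4\sqrt N}=-\tfrac3{4\sqrt N}$, as needed. If $\|z\|<\tau$, the second term gives $f(x)>-\sqrt{C\log(d)/d}\,\tau-N\gamma$, so it suffices to check $\sqrt{C\log(d)/d}\,\tau+N\gamma\le\tfrac3{4\sqrt N}$: here one uses $\gamma=2\delta\sqrt{C\log(d)/d}$, the hypothesis $\log(N)\,N\sqrt{C\log(d)/d}\le\tfrac14$ (which forces $\sqrt{C\log(d)/d}\le\tfrac1{4N\log N}$, so that $\sqrt{C\log(d)/d}\,\tau=O(N^{-3/2})$), and the bound on $\delta$ coming from its defining relation --- since $\alpha\le1$ one has $\tfrac1{\sqrt N}\le\delta\le\tfrac12$, whence $\log_{2}(1/\delta)\le\tfrac12\log_{2}N$ and $\delta=\log_{2}(1/\delta)\,(4C\sqrt{N\log(d)/d}+\tfrac1{\sqrt N})=O(\log(N)/\sqrt N)$, giving $N\gamma=2N\delta\sqrt{C\log(d)/d}=O(N^{-1/2})$ with a small enough constant.

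The only delicate point is this last numerical estimate in the small-$\|z\|$ case: since $\delta$ is specified only implicitly, one must extract an explicit upper bound for it and then chase constants so that $\sqrt{C\log(d)/d}\,\tau+N\gamma$ genuinely lies below $\tfrac{3}{4\sqrt N}$; for parameter ranges where this would fail the standing hypotheses (including $\delta\le\tfrac12$, part of Lemma~\ref{lem:prop1}, which already forces $N$ sufficiently large relative to $C$) are inconsistent, so nothing must be checked there. All remaining steps follow directly from Lemmas~\ref{lem:prop1}, \ref{lem:2prime} and the winning event.
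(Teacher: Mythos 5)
Your proposal is correct, but it reaches the lower bound on $f(x)$ by a different and somewhat heavier route than the paper. You share with the paper the first half (namely $f^*\le f(x^*)=\max\{\cN(x^*),\cW(x^*)\}\le -\tfrac{1}{\sqrt N}$ via \eqref{eq:nemopt} and Lemma~\ref{lem:prop1}, and the parameter bookkeeping $\gamma=2\delta\sqrt{C\log(d)/d}$, $\delta=O(\log(N)/\sqrt N)$), but for the second half you split on $\|z\|=\|P_{V_{t-1}^{\perp}}x\|$ and, in the large-$\|z\|$ branch, invoke the wall formula of Lemma~\ref{lem:2prime} at $(a,b)=(0,\delta)$ to get $\cW(x)\ge-\alpha\delta+(1+\alpha)\|z\|$. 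The paper needs no case analysis at all: since player B wins, $|\langle v_N,x\rangle|\le\sqrt{C\log(d)/d}\,\|P_{V_{t-1}^{\perp}}x\|\le\sqrt{C\log(d)/d}$ for \emph{every} queried $x$ (as $\|x\|\le1$), so $f(x)\ge\cN(x)\ge \langle v_N,x\rangle-\gamma N\ge-\sqrt{C\log(d)/d}-\gamma N$ uniformly, and the conclusion follows from the same numeric estimate you perform in your small-$\|z\|$ branch; in particular your large-$\|z\|$ branch (and hence the appeal to Lemma~\ref{lem:2prime}, which the paper reserves for the consistency argument of Lemma~\ref{lem:game_consistence}) is dispensable, because $\sqrt{C\log(d)/d}\,\|z\|\le\sqrt{C\log(d)/d}\le\frac{1}{4N\log N}$ is negligible regardless of $\|z\|$. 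Your branches do each close with the budget $\tfrac{3}{4\sqrt N}$ under the stated hypothesis, and the ``delicate'' constant-chasing you flag (extracting $\delta\le\log_2(N)/\sqrt N$ from the implicit definition and bounding $N\gamma+\sqrt{C\log(d)/d}\,\tau$) is exactly the computation the paper itself performs in its final display, so the proposal stands; it is simply less economical, with the only caveat being the same level of looseness about log bases and the constant $C$ that the paper's own two-line estimate also tolerates.
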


\begin{proof}
First we claim that
\[
f(x) - f^* \geq \frac{1}{\sqrt{N}} - \sqrt{\frac{C \log(d)}{d}} - \gamma N \,.
\]
This follows from \eqref{eq:nemopt}, Lemma \ref{lem:prop1}, and the fact that:
\[
f(x) \geq \cN(x) \geq \langle v_N, x \rangle - \gamma N \,.
\]
Next recall from Lemma \ref{lem:game_consistence} that we take $\gamma = 2 \delta \sqrt{\frac{C \log(d)}{d}}$, and from Lemma \ref{lem:prop1} that $\frac{\delta}{\log_2(1/\delta)} = 4 \sqrt{\frac{C N \log(d)}{d}} + \frac{1}{\sqrt{N}} \leq \frac{2}{\sqrt{N}}$ where the inequality follows from the assumption on $N$. In particular we have $\delta \leq \frac{\log(N/2)}{\sqrt{N}}$. Thus:
\[
f(x) - f^* \geq \frac{1}{\sqrt{N}} - \sqrt{\frac{C \log(d)}{d}} \left(1 + 2 \log(N/2) \sqrt{N} \right) \geq \frac{1}{4 \sqrt{N}} \,,
\]
where the second inequality follows from the assumption on $N$.
\end{proof}
\section{Acceleration with approximate proximal step oracles}
\label{sec:acceleration}

Here we provide the proofs associated with Section~\ref{sec:acceleration_framework} and prove Theorem~\ref{thm:main_acceleration}. Our proof is split into several parts. In Section~\ref{sec:algorithm} we provide the acceleration framework we leverage, in Section~\ref{sec:convergence_with_oracle} we show how to instantiate the framework using our oracles, and in Section~\ref{sec:put-together} we then prove Theorem~\ref{thm:main_acceleration}. This analysis relies on a line search result deferred to Appendix~\ref{sec:implementation}.

\subsection{Framework}
\label{sec:algorithm}

In this section we present the general acceleration framework based on \cite{MS13} which we leverage to achieve our result. This acceleration framework is given by Algorithm~\ref{alg:framework} and is a noise-tolerant analog of the one present in \cite{BJLLS8}. The framework maintains points  $x_k$ and $y_k$ in each iteration $k$. To compute the next point, a careful convex combination of them is chosen, denote $\tilde{x}_k$, and the next $y_{k  + 1}$ is chosen a point that has similar properties to the result of an approximate proximal step oracle and the next $x_{k + 1}$ is then the result of moving from $x_{k}$ in the direction of $\grad g(y_{k + 1})$. Here we provide general results regarding the iterates in the general setting of Algorithm~\ref{alg:framework}. In the next section we show how to implement the framework and ultimately bound the error.

\begin{algorithm2e} [ht!]
	\caption{Acceleration Framework}\label{alg:framework}
	\label{alg:accel_framework}
	\SetAlgoLined
	\textbf{Input:} $x_0=y_0 = 0_d$, $\sigma \in (0, 1)$, $A_0 = 0$, $K > 0$
	
	\For{ $k = 0, \ldots , K - 1$ }{
		Compute $\lambda_{k+1} > 0$ and $y_{k+1} \in \R^d$ such that for 
		\[
		a_{k+1} \defeq \frac{1}{2}\left[\lambda_{k+1}+\sqrt{\lambda_{k+1}^2+4\lambda_{k+1}A_k}\right] 
		\text{ , } 
		A_{k+1} \defeq A_k+a_{k+1}
		\text{ , } 
		\tilde{x}_k \defeq \frac{A_k}{A_{k + 1}}y_k + \frac{a_{k+1}}{A_{k+1}} x_k,
		\]
		the following condition holds
		\begin{equation} \label{eq:framework}
		\|\lambda_{k+1}\nabla g(y_{k+1}) + y_{k+1}-\tilde{x}_k\|  \leq \sigma \|y_{k+1}-\tilde{x}_k\|+\lambda_{k+1}\delta \, .
		\end{equation} 
		
		Compute $x_{k+1}$ such that the following holds
		\begin{equation}\label{eqn: update_x}
		\|x_{k+1}-(x_k-a_{k+1}\nabla g(y_{k+1}))\| \leq a_{k+1} \delta 
		\end{equation}
	}
	\textbf{return} $y_{K}$ 
\end{algorithm2e}

\begin{rem}
\label{rem:a_k}
The definition of $a_{k+1}$ was chosen such that $\lambda_{k + 1} A_{k + 1} = a_{k+1}^2$. To see this, note that $a_{k + 1}$ is a solution to $a_{k+1}^2 - \lambda_{k + 1} a_{k+1} - \lambda_{k + 1} A_k = 0$, which is equivalent as $A_{k + 1} = A_k + a_k$.
\end{rem}

In the following theorem we give a general bound for the quality of the iterates in Algorithm~\ref{alg:framework}.

\begin{theorem}[Framework Convergence]
	\label{thm:convergence}
	Algorithm~\ref{alg:framework} above gives for all $k \geq 1$ that
	\[
	A_{k} \left[ g(y_{k}) - g^* \right] + \frac{1}{2} \|x_{k}-x^*\|^2 
	+ \sum_{i \in [k]} \frac{(1 - \sigma)A_{i}}{2\lambda_{i}}\|y_{i} -\tilde{x}_{i - 1}\|^2
	\leq \frac{1}{2}\|x^*\|^2 
	+ \delta_{k}
	\]
	where
	\[
	\delta_{k} = \delta \sum_{i \in [k]} a_{i}\|x_{i}-x^*\|+\frac{\delta^2}{2(1-\sigma)}\sum_{i \in [k]}a_i^2 ~.
	\]
\end{theorem}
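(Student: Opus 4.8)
The plan is to prove Theorem~\ref{thm:convergence} by the standard "estimate sequence / potential function" induction underlying Monteiro–Svaiter acceleration, carefully tracking the $\delta$-errors introduced by the two approximate oracle conditions~\eqref{eq:framework} and~\eqref{eqn: update_x}. Define the potential $\Psi_k \defeq A_k[g(y_k)-g^*] + \tfrac12\|x_k-x^*\|^2$, and show that $\Psi_{k} - \Psi_{k-1}$ is bounded above by $-\tfrac{(1-\sigma)A_k}{2\lambda_k}\|y_k-\tilde x_{k-1}\|^2$ plus an error term of the form $\delta a_k\|x_k-x^*\| + \tfrac{\delta^2}{2(1-\sigma)}a_k^2$; summing the telescoped inequality from $i=1$ to $k$ and using $A_0=0$, $x_0=0$ then yields exactly the claimed bound with $\delta_k$ as stated.

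\textbf{Key steps.} First I would handle the $x$-update: from~\eqref{eqn: update_x}, write $x_k = x_{k-1} - a_k\nabla g(y_k) + e_k$ with $\|e_k\|\le a_k\delta$, and expand $\tfrac12\|x_k-x^*\|^2 - \tfrac12\|x_{k-1}-x^*\|^2 = -a_k\langle \nabla g(y_k), x_{k-1}-x^*\rangle + \tfrac12 a_k^2\|\nabla g(y_k)\|^2 + \langle e_k, x_k - x_{k-1} + \text{(cross terms)}\rangle$, collecting the $e_k$-dependent pieces into a term controlled by $\delta a_k\|x_k - x^*\|$ (plus lower-order $\delta^2 a_k^2$ pieces). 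Second, use convexity of $g$ at $y_k$ against the two comparison points $x^*$ and $\tilde x_{k-1}$: since $A_k g(y_k) = A_{k-1}g(y_k) + a_k g(y_k)$, bound $A_{k-1}g(y_k) \le A_{k-1}g(y_{k-1}) + A_{k-1}\langle\nabla g(y_k), y_k - y_{k-1}\rangle$ and $a_k g(y_k) \le a_k g^* + a_k\langle \nabla g(y_k), y_k - x^*\rangle$; combine with $\tilde x_{k-1} = \tfrac{A_{k-1}}{A_k}y_{k-1} + \tfrac{a_k}{A_k}x_{k-1}$ so that the coefficient-$A_{k-1}$ and coefficient-$a_k$ inner products assemble into $A_k\langle \nabla g(y_k), y_k - \tilde x_{k-1}\rangle$ plus the $\langle \nabla g(y_k), x_{k-1}-x^*\rangle$ term that cancels against the one from the $x$-update. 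Third, the crucial algebraic identity: $a_k^2 = \lambda_k A_k$ (Remark~\ref{rem:a_k}), which makes $\tfrac12 a_k^2\|\nabla g(y_k)\|^2 = \tfrac{\lambda_k A_k}{2}\|\nabla g(y_k)\|^2$, and then $A_k\langle\nabla g(y_k), y_k-\tilde x_{k-1}\rangle + \tfrac{\lambda_k A_k}{2}\|\nabla g(y_k)\|^2 = \tfrac{A_k}{2\lambda_k}\big(\|\lambda_k\nabla g(y_k) + y_k - \tilde x_{k-1}\|^2 - \|y_k-\tilde x_{k-1}\|^2\big)$; apply~\eqref{eq:framework} to bound the first square by $(\sigma\|y_k-\tilde x_{k-1}\| + \lambda_k\delta)^2 \le (1+\text{small})\sigma^2\|y_k-\tilde x_{k-1}\|^2 + (\text{rest})$, leaving the $-\tfrac{(1-\sigma)A_k}{2\lambda_k}\|y_k-\tilde x_{k-1}\|^2$ term and absorbing the cross/residual terms into the $\delta$-error budget.

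\textbf{Main obstacle.} The delicate part is bookkeeping the $\delta$-error terms so they sum to exactly $\delta_k = \delta\sum_i a_i\|x_i-x^*\| + \tfrac{\delta^2}{2(1-\sigma)}\sum_i a_i^2$: the errors enter as (i) $\lambda_k\delta$ inside the square from~\eqref{eq:framework}, which after expansion contributes a cross term $\sim \sigma\lambda_k\delta\|y_k-\tilde x_{k-1}\|$ that must be Young-inequality'd against the $(1-\sigma)$-coefficient negative term (consuming part of its slack) and a $\lambda_k^2\delta^2$ term that, divided by $\tfrac{2\lambda_k}{A_k}$ and using $\lambda_k A_k = a_k^2$, becomes $\tfrac{a_k^2\delta^2}{2}$; and (ii) the $e_k$ term from~\eqref{eqn: update_x} giving $\delta a_k\|x_k-x^*\|$ directly. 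Getting the precise constant $\tfrac{1}{2(1-\sigma)}$ in front of $\sum a_i^2\delta^2$ requires choosing the Young-inequality split so that exactly a $(1-\sigma)$ fraction of $\tfrac{A_k}{2\lambda_k}\|y_k-\tilde x_{k-1}\|^2$ survives; this is where one must be most careful, though it is purely mechanical once the split is fixed.
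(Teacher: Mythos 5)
Your proposal is correct and follows essentially the same route as the paper's proof: the same potential/telescoping structure, convexity of $g$ at $y_k$ combined through the convex-combination identity for $\tilde{x}_{k-1}$, the identity $a_k^2=\lambda_k A_k$ to complete the square against $\|\lambda_k\nabla g(y_k)+y_k-\tilde{x}_{k-1}\|^2$, the Young split $(a+b)^2\le(1+t)a^2+(1+t^{-1})b^2$ with $t=\frac{1-\sigma}{\sigma}$ to extract the $(1-\sigma)$ term and the $\frac{\lambda_k^2\delta^2}{1-\sigma}$ remainder, and Cauchy--Schwarz on the $x$-update error giving $a_k\delta\|x_k-x^*\|$. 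The only difference is cosmetic bookkeeping (the paper expands $\frac12\|x_{k+1}-x^*\|^2$ around $x_{k+1}-\Delta_{k+1}$ and drops $-\frac12\|\Delta_{k+1}\|^2$, so no extra $\delta^2$ pieces arise from the $x$-update, exactly matching the stated $\delta_k$).
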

\begin{proof}
	Let $\Delta_{k + 1} \defeq x_{k + 1} - (x_k - a_{k + 1} \grad g(y_{k + 1}))$, $r_k  \defeq \frac{1}{2} \norm{x_k - x^*}^2$, and $\epsilon_k  \defeq g(y_k) - g^*$ so 
	\begin{align*}
	\frac{1}{2}  \norm{x_{k + 1} - x^* - \Delta_{k + 1}}^2
	&= r_k +  a_{k + 1} \grad g(y_{k + 1})^\top (x^* - x_k) + \frac{a_{k + 1}^2}{2} \norm{ \grad g(y_{k + 1}) }^2 ~.
	\end{align*}
	Now, since
	\[
	x_k 
	= y_k  + \frac{A_{k + 1}}{a_{k + 1}} (\tilde{x}_k - y_k) 
	=  y_{k + 1} + \frac{A_{k + 1}}{a_{k + 1}} (\tilde{x}_k - y_{k + 1}) 
	+ \frac{A_{k}}{a_{k + 1}} (y_{k + 1} - y_k) 
	\]
	and by convexity $g(z)  \geq g(y_{k + 1}) + \grad g(y_{k + 1})^\top (z - y_{k + 1})$ for all $z$ we have
	\begin{align*}
	a_{k + 1} \grad g(y_{k + 1})^\top (x^* - x_k) 
	&\leq 
	A_{k + 1} \grad g(y_{k + 1})^\top (y_{k +  1} - \tilde{x}_k)
	+ A_{k} \epsilon_{k} - A_{k + 1}  \epsilon_{k + 1}  ~.
	\end{align*}
	Combining these inequalities and applying Cauchy Schwarz yields
	\begin{align*}
	r_{k + 1}
	&= \frac{1}{2} \norm{x_{k + 1} -x^* - \Delta_{k + 1}}^2 
	+ \Delta^\top_{k + 1} (x_{k + 1} -x^* - \Delta_{k + 1}) + \frac{1}{2} \norm{\Delta_{k + 1}}^2
	\\
	&\leq 
	r_{k} + 
	A_{k + 1} \grad g(y_{k + 1})^\top (y_{k +  1} - \tilde{x}_k)
	+ A_{k} \epsilon_{k} - A_{k + 1}  \epsilon_{k + 1}
	+ \frac{a_{k + 1}^2}{2} \norm{ \grad g(y_{k + 1}) }^2\\
	& \quad
	+ \norm{\Delta_{k + 1}} \norm{x_{k + 1} - x^*}
	\end{align*}
	Now rearranging \eqref{eq:framework} and applying $(a+b)^2 \leq (1 + t) a^2+ (1 + t^{-1})b^2$ for $t = \frac{1 - \sigma}{\sigma}$
	yields  
	\[2\lambda_{k+1}\nabla g(y_{k+1})^\top(y_{k+1}-\tilde{x}_k)+\lambda_{k+1}^2\|\nabla g(y_{k+1})\|^2\leq -(1- \sigma)\|y_{k+1}-\tilde{x}_k\|^2+(1-\sigma)^{-1}\lambda_{k+1}^2\delta^2\]
	Combining with the facts that $\lambda_kA_k = a_k^2$ and $\norm{\Delta_{k + 1}} \leq  a_{k+1} \delta $ yields
	\begin{align*}
	r_{k+1}+A_{k+1}\epsilon_{k+1}+\frac{(1 - \sigma)A_{k+1}}{2\lambda_{k+1}}\|y_{k+1} -\tilde{x}_k\|^2&\leq r_k+A_k\epsilon_k +a_{k+1} \delta \|x_{k+1}-x^*\|+\frac{\delta^2}{2(1-\sigma)}a_{k+1}^2
	\end{align*}
	Summing over $k$ and using that $A_0 = 0$ and $x_0 = 0$ yields the result.
\end{proof}

Next we show that for sufficiently small $\delta$, the error in Theorem~\ref{thm:convergence} is increased by only a constant factor. This will allow us to apply Theorem~\ref{thm:convergence} when $\delta \neq 0 $.

\begin{lem}[Error Tolerance]
	\label{err:bound}	
	Algorithm~\ref{alg:framework} with $\delta \leq c\sqrt{1-\sigma} \|x^*\| / A_{K}$ for some $c, K \geq 0$ gives that $\delta_k \leq c(1+3c)\|x^*\|^2$. Consequently, if $c \leq \frac{1}{4}$ then for all $k \in [K]$
	\begin{equation}
	A_{k} \left[ g(y_{k}) - g^* \right] + \frac{1}{2} \|x_{k}-x^*\|^2 
	+ \sum_{i \in [k]} \frac{(1 - \sigma)A_{i }}{2\lambda_{i}}\|y_{i} -\tilde{x}_{i-1}\|^2
	\leq \|x^*\|^2 
	\label{eq:converge_rate}
	\end{equation}
	In particular, this implies that taking $\delta \leq \frac{\|x^*\|}{\mu\cdot A_{K}}$ for $\mu \defeq \frac{4\sqrt{2}}{\sqrt{1-\sigma}}$ then $\|x_{k}-x^{*}\|\leq2\|x^{*}\|$. Furthermore, we have that either $g(y_{k})\leq g^*+\epsilon$ or $A_{k}\leq\frac{\|x^*\|^2}{\epsilon}$. 
\end{lem}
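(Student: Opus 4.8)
The plan is to bootstrap the convergence guarantee of Theorem~\ref{thm:convergence}. That bound is self-referential: its error term $\delta_{k}$ already involves the iterate distances $\|x_{i}-x^{*}\|$ that we ultimately want to control. So the strategy is first to extract from Theorem~\ref{thm:convergence} a crude uniform bound on these distances, substitute it back into $\delta_{k}$, and only then read off the stated consequences.

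\emph{Extracting a uniform bound.} Applying Theorem~\ref{thm:convergence} and discarding the nonnegative function-gap term and the nonnegative sum $\sum_{i\in[k]}\tfrac{(1-\sigma)A_i}{2\lambda_i}\|y_i-\tilde x_{i-1}\|^2$ leaves $\tfrac12\|x_k-x^*\|^2 \le \tfrac12\|x^*\|^2 + \delta_k$ for every $k\in[K]$. Now set $B \defeq \max_{i\in[K]}\|x_i-x^*\|$. Since $A_0=0$, $A_i=A_{i-1}+a_i$, and all $a_i>0$, we have $\sum_{i\in[k]}a_i = A_k \le A_K$ and $\sum_{i\in[k]}a_i^2 \le A_k^2 \le A_K^2$; together with the hypothesis $\delta A_K \le c\sqrt{1-\sigma}\,\|x^*\| \le c\|x^*\|$, the definition of $\delta_k$ then gives $\delta_k \le c\|x^*\|B + \tfrac{c^2}{2}\|x^*\|^2$, uniformly in $k$.

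\emph{Closing the loop.} Combining the two bounds yields $\|x_k-x^*\|^2 \le (1+c^2)\|x^*\|^2 + 2c\|x^*\|B$ for all $k\in[K]$, and maximizing the left-hand side over $k$ gives the scalar quadratic inequality $B^2 - 2c\|x^*\|B - (1+c^2)\|x^*\|^2 \le 0$, hence $B \le \|x^*\|\bigl(c + \sqrt{1+2c^2}\,\bigr)$. Substituting this back into the bound on $\delta_k$ and using the elementary inequality $\sqrt{1+2c^2} \le 1 + \tfrac32 c$ (equivalent to $3c + \tfrac14 c^2 \ge 0$, true for all $c\ge0$) gives $\delta_k \le c(1+3c)\|x^*\|^2$, which is the first claim and holds for every $c \ge 0$.

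\emph{Consequences.} For $c \le \tfrac14$ one checks $\tfrac12 + c(1+3c) \le 1$, so reinserting $\delta_k \le c(1+3c)\|x^*\|^2$ into Theorem~\ref{thm:convergence}, this time keeping all terms, yields \eqref{eq:converge_rate}. Taking $\delta \le \|x^*\|/(\mu A_K)$ with $\mu = 4\sqrt2/\sqrt{1-\sigma}$ is exactly the case $c = \tfrac1{4\sqrt2} < \tfrac14$, so \eqref{eq:converge_rate} applies; its $\tfrac12\|x_k-x^*\|^2$ term then forces $\|x_k-x^*\| \le \sqrt2\,\|x^*\| \le 2\|x^*\|$, and its $A_k[g(y_k)-g^*]$ term gives $g(y_k)-g^* \le \|x^*\|^2/A_k$ (using $g(y_k)\ge g^*$), so either $g(y_k)-g^* \le \epsilon$ or $A_k \le \|x^*\|^2/\epsilon$. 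The only genuine obstacle is the circular dependence of $\delta_k$ on the iterate norms; it is dissolved in the ``closing the loop'' step by passing to the single scalar unknown $B$ and solving one quadratic, after which everything reduces to bookkeeping with the recursion $A_i = A_{i-1}+a_i$ and the inequalities $\sum a_i^2 \le (\sum a_i)^2$ and $\sqrt{1+2c^2} \le 1+\tfrac32 c$.
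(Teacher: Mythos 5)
Your proof is correct and follows essentially the same route as the paper: bound $\delta_k$ by the maximal iterate distance via $\delta A_K \leq c\sqrt{1-\sigma}\|x^*\|$ and $\sum_i a_i^2 \leq A_K^2$, solve the resulting scalar quadratic in $B=\max_i\|x_i-x^*\|$, and reinsert into Theorem~\ref{thm:convergence}. The only differences are cosmetic (you keep $\sqrt{1+2c^2}\leq 1+\tfrac32 c$ where the paper uses $\sqrt{a+b}\leq\sqrt{a}+\sqrt{b}$, and you spell out the ``in particular'' consequences that the paper leaves implicit).
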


\begin{proof}
	Theorem~\ref{thm:convergence}, the assumption on $\delta$, $\sigma\in[0,1)$ and $A_{K} = \sum_{i \in [K]} a_i$ yield that for all $k \in [K]$
	\[
	\frac{1}{2}\|x_{k}-x^*\|^2 
	\leq  
	\frac{1}{2} \|x^*\|^2+ c \|x^*\| \max_{i \in [K]} \|x_{i}-x^*\|+\frac{c^2}{2} \|x^*\|^2
	\]
	Since this holds for all $k \in [K]$ it clearly holds for $k \in \argmax_{i \in [K]} \|x_i - x^*\|$ and therefore 
	\[
	 \max_{i \in [K]} \|x_{i}-x^*\|^2 - 2 c \|x^*\|  \max_{i \in [K]} \|x_{i}-x^*\| - (1+c^2)\|x^*\|^2 \leq 0
	\]
	Solving the quadratic and using $\sqrt{a + b} \leq \sqrt{a} + \sqrt{b}$ implies that
	\[
	 \max_{i \in [K]} \|x_{i}-x^*\| \leq \frac{1}{2} 
	\left[
	2 c \|x^*\| + \sqrt{4 c^2 \|x^*\|^2 + 4(1+c^2)\|x^*\|^2}
	\right]
	\leq 
	(c+(1+c\sqrt{2})) \| x^* \| ~.
	\]
	Therefore by the definition of $\delta_k$, we have
	\[\delta_k = c[c+(1+\sqrt{2}c)]\|x^*\|^2+\frac{c^2}{2}\|x^*\|^2\leq (3c^2+c)\|x^*\|^2\]
	for all $k \in [K]$ and \eqref{eq:converge_rate} follows from Theorem~\ref{thm:convergence} and that $c(1 + 3c) \leq \frac{1}{2}$ for $c \in [0, \frac{1}{4}]$.
\end{proof}

\subsection{Leveraging approximate proximal step oracle}
\label{sec:convergence_with_oracle}

Here we show how to implement and bound the convergence of Algorithm~\ref{alg:accel_framework} given an approximate proximal step oracle. First, we show that given $\lambda_{k + 1} \omega(\|y_{k+1} - \tilde{x}_k\|)$ is sufficiently close to $1$ then $y_{k + 1}$ can be computed with an approximate proximal oracle. We show that such a $y_{k + 1}$ can always be found (for suitable choice of $\sigma$) in Appendix~\ref{sec:implementation}.

\begin{lem}[Line Search Guarantee]
	\label{lem:framework_consistency}
	If in each iteration $k$ of Algorithm~\ref{alg:accel_framework} we choose $\lambda_{k + 1}$ and $y_{k+1}$ such that for $d = \|y_{k + 1} - \tilde{x}_k\|$
	\[
	\|
	\nabla g(y_{k+1}) + 
	\omega(d) (y_{k + 1} - \tilde{x}_k )
	\| 
	\leq 
	\alpha \cdot \omega(d) d+\delta  \quad
	\text{ and } \quad
	\frac{1 - \sigma}{1 - \alpha} \leq \lambda_{k+1} \omega(d) \leq 1 
	\] 
	for $\alpha \in [0, 1)$ and $\omega : \R_+ \rightarrow \R_+$ then \eqref{eq:framework} is satisfied.
\end{lem}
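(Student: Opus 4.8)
The goal is to verify \eqref{eq:framework}, i.e.\ to bound $\|\lambda_{k+1}\nabla g(y_{k+1}) + y_{k+1}-\tilde{x}_k\|$ by $\sigma\|y_{k+1}-\tilde x_k\| + \lambda_{k+1}\delta$. Writing $\lambda = \lambda_{k+1}$, $y = y_{k+1}$, $\tilde x = \tilde x_k$, $d = \|y - \tilde x\|$, and $w = \omega(d)$, the plan is to start from the algebraic identity
\[
\lambda\nabla g(y) + (y - \tilde x) = \lambda\bigl(\nabla g(y) + w(y - \tilde x)\bigr) + (1 - \lambda w)(y - \tilde x),
\]
apply the triangle inequality, and then substitute the two hypotheses. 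By the first hypothesis (the approximate proximal condition) this gives
\[
\|\lambda\nabla g(y) + (y-\tilde x)\| \le \lambda\|\nabla g(y) + w(y-\tilde x)\| + |1 - \lambda w|\,d \le \lambda(\alpha w d + \delta) + |1-\lambda w|\, d.
\]

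Next, the upper bound $\lambda w \le 1$ from the second hypothesis lets us replace $|1-\lambda w|$ by $1-\lambda w$, and collecting the coefficient of $d$ yields
\[
\|\lambda\nabla g(y) + (y-\tilde x)\| \le \lambda\delta + d\bigl(\alpha\lambda w + 1 - \lambda w\bigr) = \lambda\delta + d\bigl(1 - \lambda w(1-\alpha)\bigr).
\]
Finally, the lower bound $\lambda w \ge \tfrac{1-\sigma}{1-\alpha}$ gives $\lambda w(1-\alpha) \ge 1-\sigma$, hence $1 - \lambda w(1-\alpha) \le \sigma$, and we conclude $\|\lambda\nabla g(y) + (y-\tilde x)\| \le \lambda\delta + \sigma d$, which is exactly \eqref{eq:framework}.

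There is no genuine obstacle here; it is a short triangle-inequality computation, so the proof will be a few lines. The only point that requires a little care is not to be wasteful when bounding $\lambda\|\nabla g(y) + w(y-\tilde x)\|$: one should keep the full factor $\alpha\lambda w d$ rather than immediately using $\lambda w \le 1$ to replace it by $\alpha d$, since the gap between $\alpha\lambda w$ and $\alpha$ is precisely what combines with the $(1-\lambda w)d$ term to produce the clean coefficient $1 - \lambda w(1-\alpha)$, which is exactly the quantity the lower-bound hypothesis $\lambda w \ge (1-\sigma)/(1-\alpha)$ was designed to control. This also explains the particular form of the two-sided condition on $\lambda_{k+1}\omega(d)$ in the statement.
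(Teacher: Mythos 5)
Your proof is correct and follows essentially the same route as the paper's: decompose $\lambda_{k+1}\nabla g(y_{k+1}) + y_{k+1}-\tilde{x}_k$ via the triangle inequality into the proximal-oracle term plus $(1-\lambda_{k+1}\omega(d))(y_{k+1}-\tilde{x}_k)$, use $\lambda_{k+1}\omega(d)\le 1$ to drop the absolute value, and then use the lower bound $\lambda_{k+1}\omega(d)\ge(1-\sigma)/(1-\alpha)$ to bound the coefficient $1-(1-\alpha)\lambda_{k+1}\omega(d)$ by $\sigma$. The point you flag about keeping the factor $\alpha\lambda_{k+1}\omega(d)\,d$ intact is precisely how the paper's computation is organized, so there is nothing to add.
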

\begin{proof}
	Leveraging that the assumptions imply $| \lambda_{k + 1} \omega(d)  - 1| = 1 - \lambda_{k + 1} \omega(d)$ yields
	\begin{align*}
	\normFull{\lambda_{k+1}\nabla g(y_{k+1})+y_{k+1}-\tilde{x}_k }
	&\leq
	\lambda_{k + 1} \normFull{ \grad g(y_{k + 1}) + \omega(d) (y_{k + 1} - \tilde{x}_k) }
	+ \left|\lambda_{k + 1}  \omega(d) - 1\right|  \norm{y_{k + 1} - \tilde{x}_k}
	\\
	&\leq \lambda_{k + 1} \left(\alpha \cdot \omega(d) d  +\delta\right) + (1 -  \lambda_{k + 1} \omega(d)) d\\
	&=
	[1 - (1 - \alpha)  \lambda_{k + 1} \omega(d) ]d +\lambda_{k+1}\delta~.
	\end{align*}
	Since $(1 - \alpha) \lambda_{k+1}  \omega(d) \geq 1 - \sigma$ by assumption the result follows.
\end{proof}

Note that the update $x_{k+1}$ can simply be read as $x_{k+1} = x_k-a_{k+1}\cdot v_{k+1}$ where $\| v_{k+1} - \grad g(y_{k + 1})\| \leq \delta$. Consequently, $v_{k +1}$ can just be the result of a $\delta$-approximate gradient oracle (Definition~\ref{def:noisy_gradient}). Consequently, this lemma shows that Algorithm~\ref{alg:accel_framework} can be implemented with the oracles at our disposal, provided line search can be performed to achieve the guarantee of Lemma~\ref{lem:framework_consistency}. We discuss this in the next section. 

Next we bound the diameter of the iterates of the algorithm, i.e. how much the points vary. 
 
\begin{lem}[Diameter Bound] 
	\label{lem:diameter} 
	If in Algorithm~\ref{alg:framework} we have $\delta \leq  \frac{\|x^*\|}{\mu\cdot A_{K}}$ for $\mu \defeq \frac{4\sqrt{2}}{\sqrt{1-\sigma}}$ and some $K > 0$. Then for all $k \in [K]$ and $\theta \in [0,1]$ we have  $\|y_{k}-x^{*}\|\leq \mu\|x^{*}\|$ and	$\|\widetilde{x}_{\theta} - x^{*}\|\leq \mu\|x^{*}\|$ for $\widetilde{x}_{\theta}=(1-\theta)x_{k}+\theta y_{k}$.
\end{lem}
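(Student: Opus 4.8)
The plan is to bootstrap from the ``Error Tolerance'' lemma. Since the hypothesis $\delta \le \frac{\|x^*\|}{\mu A_K}$ with $\mu = \frac{4\sqrt2}{\sqrt{1-\sigma}}$ is exactly the bound $\delta \le c\sqrt{1-\sigma}\,\|x^*\|/A_K$ of Lemma~\ref{err:bound} with $c = \frac{1}{4\sqrt2} < \frac14$, that lemma gives both the convergence estimate \eqref{eq:converge_rate} for every $k\in[K]$ and the diameter bound $\|x_k - x^*\| \le 2\|x^*\|$. The only remaining work is to transfer this control from the sequence $\{x_k\}$ to $\{y_k\}$, and then to the convex combinations $\widetilde x_\theta$.

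First I would extract from \eqref{eq:converge_rate} a per-iteration bound on the displacement $\|y_i - \widetilde x_{i-1}\|$. All terms on the left of \eqref{eq:converge_rate} are nonnegative, so keeping only the $i$-th summand of the sum gives $\frac{(1-\sigma)A_i}{2\lambda_i}\|y_i - \widetilde x_{i-1}\|^2 \le \|x^*\|^2$; substituting $\lambda_i = a_i^2/A_i$ from Remark~\ref{rem:a_k} turns this into
\[
\|y_i - \widetilde x_{i-1}\| \;\le\; \frac{a_i}{A_i}\sqrt{\tfrac{2}{1-\sigma}}\,\|x^*\| \, .
\]

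The main step is then a telescoping argument. Writing $y_k - x^* = (y_k - \widetilde x_{k-1}) + (\widetilde x_{k-1} - x^*)$ and expanding $\widetilde x_{k-1} - x^* = \frac{A_{k-1}}{A_k}(y_{k-1}-x^*) + \frac{a_k}{A_k}(x_{k-1}-x^*)$ via the definition of $\widetilde x_{k-1}$ in Algorithm~\ref{alg:framework}, the triangle inequality together with the displacement bound above and $\|x_{k-1}-x^*\|\le 2\|x^*\|$ yields
\[
A_k\|y_k - x^*\| \;\le\; A_{k-1}\|y_{k-1}-x^*\| + a_k\Big(\sqrt{\tfrac{2}{1-\sigma}} + 2\Big)\|x^*\| \, .
\]
Since $A_0 = 0$, summing over $i \le k$ and using $\sum_{i\le k} a_i = A_k$ gives $\|y_k - x^*\| \le \big(\sqrt{\tfrac{2}{1-\sigma}}+2\big)\|x^*\|$, and since $1-\sigma \le 1$ one checks $\sqrt{\tfrac{2}{1-\sigma}}+2 \le \frac{4\sqrt2}{\sqrt{1-\sigma}} = \mu$ (equivalently $2 \le 3\sqrt2/\sqrt{1-\sigma}$), so $\|y_k - x^*\| \le \mu\|x^*\|$. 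Finally, for $\widetilde x_\theta = (1-\theta)x_k + \theta y_k$ the claim is immediate by convexity of the norm: $\|\widetilde x_\theta - x^*\| \le (1-\theta)\|x_k - x^*\| + \theta\|y_k - x^*\| \le (1-\theta)\cdot 2\|x^*\| + \theta\cdot\mu\|x^*\| \le \mu\|x^*\|$, using $2 \le \mu$.

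The one place needing care — the main obstacle — is that applying the recursion $\|y_k - x^*\| \le \|y_k - \widetilde x_{k-1}\| + \|\widetilde x_{k-1}-x^*\|$ naively does not close: each step would add a fixed multiple of $\|x^*\|$ and the bound on $\|y_k-x^*\|$ would blow up linearly in $k$. One must multiply through by $A_k$ first, which exploits the $a_i/A_i$ factor in the displacement bound and produces a genuine telescoping sum of the $a_i$'s.
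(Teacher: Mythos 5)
Your proof is correct, and at the crucial step it takes a mildly different route from the paper. Both arguments start identically: invoke Lemma~\ref{err:bound} (your $c=\tfrac{1}{4\sqrt2}<\tfrac14$ matches the paper's ``in particular'' clause) to get \eqref{eq:converge_rate} and $\|x_k-x^*\|\leq 2\|x^*\|$, and then expand $\widetilde{x}_{k-1}$ by its definition to set up a recursion for $\|y_k-x^*\|$ with weights $\tfrac{A_{k-1}}{A_k}$ and $\tfrac{a_k}{A_k}$. The divergence is in how the accumulated displacement terms are controlled: the paper unrolls the recursion to $2\|x^*\|+\tfrac{1}{A_{k+1}}\sum_j A_j\|y_j-\widetilde{x}_{j-1}\|$ and bounds this sum by Cauchy--Schwarz against the full energy bound $\sum_j \tfrac{A_j}{\lambda_j}\|y_j-\widetilde{x}_{j-1}\|^2\leq \tfrac{2\|x^*\|^2}{1-\sigma}$, together with $\sum_j\lambda_j\leq 4A_{k+1}$ obtained from the relation $a_j^2=\lambda_j A_j$ (equation \eqref{eqn:lambda}); you instead drop all but the $i$-th summand of \eqref{eq:converge_rate} to get the per-step bound $\|y_i-\widetilde{x}_{i-1}\|\leq \tfrac{a_i}{A_i}\sqrt{\tfrac{2}{1-\sigma}}\|x^*\|$ and telescope $A_k\|y_k-x^*\|$, using $\sum_{i\leq k}a_i=A_k$. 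Your version avoids both the Cauchy--Schwarz step and the $\sum_j\lambda_j$ estimate, and in fact yields the slightly sharper constant $2+\sqrt{2/(1-\sigma)}$ versus the paper's $2+2\sqrt{2}/\sqrt{1-\sigma}$, both of which sit comfortably below $\mu=\tfrac{4\sqrt2}{\sqrt{1-\sigma}}$; the price is that each step is charged the full budget $\|x^*\|^2$ rather than sharing it, which happens to be harmless here because of the $a_i/A_i$ weighting. The treatment of $\widetilde{x}_\theta$ by convexity of the norm is the same as the paper's (which bounds by the maximum of the two distances), and your observation that one must weight by $A_k$ before summing to avoid a linear-in-$k$ blowup is exactly the role the $A_j/A_{k+1}$ factors play in the paper's unrolled sum.
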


\begin{proof}
	Let $D_{k}=\|y_{k}-x^{*}\|$. Using $\widetilde{x}_{k}=\frac{A_{k}}{A_{k+1}}y_{k}+\frac{a_{k+1}}{A_{k+1}}x_{k}$,
	we have 
	\[
	\|\widetilde{x}_{k}-x^{*}\|\leq\frac{A_{k}}{A_{k+1}}D_{k}+\frac{2a_{k+1}}{A_{k+1}}\|x^{*}\|.
	\]
	Hence, $D_{k+1}\leq\frac{A_{k}}{A_{k+1}}D_{k}+\frac{2a_{k+1}}{A_{k+1}}\|x^{*}\|+\|y_{k+1}-\widetilde{x}_{k}\|$. Rescaling and summing over $k$ yields
	\begin{align*}
		D_{k+1} & \leq 2\|x^{*}\|+\|y_{k+1}-\widetilde{x}_{k}\|+\frac{A_{k}}{A_{k+1}}\|y_{k}-\widetilde{x}_{k-1}\|+\frac{A_{k-1}}{A_{k+1}}\|y_{k-1}-\widetilde{x}_{k-2}\|+\cdots\\
		& \leq2\|x^{*}\|+\frac{1}{A_{k+1}}\sum_{j=1}^{k+1}A_{j}\|y_{j}-\widetilde{x}_{j-1}\|\\
		& \leq2\|x^{*}\|+\frac{\sqrt{\sum_{j=1}^{k+1}A_{j}\lambda_{j}}}{A_{k+1}}\sqrt{\sum_{j=1}^{k+1}\frac{A_{j}}{\lambda_{j}}\|y_{j}-\widetilde{x}_{j-1}\|^{2}}\\
		& \leq2\|x^{*}\|+\frac{\sqrt{\sum_{j=1}^{k+1}\lambda_{j}}}{\sqrt{A_{k+1}}}\sqrt{\frac{2\|x^{*}\|^{2}}{1-\sigma}}\\
		& \leq2\|x^{*}\|+\frac{2\sqrt{2}}{\sqrt{1-\sigma}}\|x^*\| \leq \mu\|x^*\|
	\end{align*}
	where we used $A_{j}$ is increasing and Lemma~\ref{err:bound}
	in the third to last equation, and equation \ref{eqn:lambda} for the second to last. The assumption on the relation between $\alpha$ and $\sigma$ implies $\sigma = \frac{1+\alpha}{2} = [\frac{1}{2},1)$ and the definition of $\mu$ gives the last inequality.
	
	 The second part of the claim follows by observing that $\widetilde{x}_\theta$ is a convex combination of $x_k$ and $y_k$, therefore
	\[\|\widetilde{x}_{\theta}-x^*\|\leq\max\{\|x_{k}-x^{*}\|,\|y_{k}-x^{*}\|\} \leq \mu\|x^*\|\, .\]
\end{proof}

Finally, we bound the growth of $A_k$; this is crucial to derive the final convergence rate of the algorithm.

\begin{lem}[Growth of $A_k$]
\label{lem:growth_Ak}
Let $\rho \defeq \frac{1-\alpha}{1-\sigma} = 2$ and $\mu \defeq \frac{4\sqrt{2}}{\sqrt{1-\sigma}} = \frac{8}{\sqrt{1-\alpha}}$. If in Algorithm~\ref{alg:framework} for  $K \geq 0$ we have $\delta \leq  \frac{\|x^*\|}{\mu\cdot A_{K}}$  and $\lambda_{k} \geq \frac{1}{\rho \cdot \omega(\| y_{k} -\tilde{x}_{k-1} \|)}$ for all $k \in \{0, ..., K \}$ then for all $J \in (0, \frac{k}{2})$ we have 
\[
A_{k} \geq 
\min \left\{
\frac{4^J}{\rho \cdot \omega(\mu\|x^*\|/4)}
~ , ~ 
\frac{ (k/J)^2 }{16 \rho \cdot \omega\left(\frac{4\mu\|x^*\|}{(k/J)^{3/2}} \right) }
\right\}
~.
\]
Further, if $\|x^*\|\leq R$ then $A_{k}\geq\frac{1}{2\omega(2\mu R)}$ for all $k \in [K]$.
\end{lem}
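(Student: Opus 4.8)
\textbf{Proof proposal for Lemma~\ref{lem:growth_Ak}.}

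The plan is to exploit the defining recursion $\lambda_{k+1}A_{k+1}=a_{k+1}^2$ together with $A_{k+1}=A_k+a_{k+1}$ to turn the lower bound on the $\lambda_k$'s into a growth estimate for $A_k$. First I would record the elementary consequence: since $a_{k+1}=A_{k+1}-A_k$ and $\lambda_{k+1}=a_{k+1}^2/A_{k+1}$, the hypothesis $\lambda_{k+1}\geq \tfrac{1}{\rho\,\omega(\|y_{k+1}-\tilde x_k\|)}$ rewrites as
\[
(A_{k+1}-A_k)^2 \;\geq\; \frac{A_{k+1}}{\rho\,\omega(\|y_{k+1}-\tilde x_k\|)} \;\geq\; \frac{A_{k+1}}{\rho\,\omega(\mu\|x^*\|)},
\]
where the last step uses Lemma~\ref{lem:diameter} (valid under the stated bound on $\delta$) to get $\|y_{k+1}-\tilde x_k\|\leq \|y_{k+1}-x^*\|+\|\tilde x_k-x^*\|\leq 2\mu\|x^*\|$, combined with monotonicity of $\omega$ — actually I will need to be a little careful about the exact argument of $\omega$ appearing in the two branches of the conclusion, so I would track $\|y_{k+1}-\tilde x_k\|$ through Lemma~\ref{lem:diameter} more precisely rather than crudely bounding by $2\mu\|x^*\|$. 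This gives $\sqrt{A_{k+1}}-\sqrt{A_k}\geq \tfrac12(\sqrt{A_{k+1}}+\sqrt{A_k})^{-1}(A_{k+1}-A_k)/\sqrt{A_{k+1}}$-type manipulations; the cleaner route is the standard one from the Monteiro--Svitkina analysis: either $A_{k+1}\geq 2A_k$ (the "doubling" regime), or $A_{k+1}<2A_k$ in which case $(A_{k+1}-A_k)^2\geq A_{k+1}/(\rho\omega)\geq A_k/(\rho\omega)$ forces $A_{k+1}-A_k\geq \sqrt{A_k/(\rho\omega)}$, i.e. $\sqrt{A_{k+1}}-\sqrt{A_k}\geq \tfrac{1}{2\sqrt{\rho\omega}}$ up to the constant coming from $A_{k+1}<2A_k$.

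Next I would split the $k$ iterations into two groups. Let $D$ be the number of indices where doubling occurs; on those, $A$ multiplies by at least $2$, contributing a factor $2^D$. On the remaining $k-D$ indices we are in the incremental regime and $\sqrt{A}$ grows additively by a fixed amount $c_0/\sqrt{\rho\,\omega(r_k)}$, where $r_k$ is the relevant radius bound. To make the two-sided estimate with the free parameter $J$: if $D\geq J$ then already $A_k\geq 2^{J}\cdot(\text{base})$ — matching the first term $\tfrac{4^J}{\rho\omega(\mu\|x^*\|/4)}$ after tracking constants (I expect the exponent base in the statement to be $4$ rather than $2$ because one gets doubling of $A$ hence of $\sqrt{A}$ each contributing a $\sqrt2$, or because the doubling threshold is chosen as $4$; I would set the threshold constant to reproduce exactly $4^J$). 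If $D<J$ then at least $k-J>k/2$ incremental steps occur, so $\sqrt{A_k}\geq (k-J)\cdot c_0/\sqrt{\rho\,\omega(r)}$, hence $A_k\geq (k-J)^2 c_0^2/(\rho\,\omega(r))\geq \tfrac{(k/J)^2}{\text{const}\cdot\rho\,\omega(r)}$ — wait, that does not have the right shape; the $(k/J)^2$ form with argument $\propto \|x^*\|/(k/J)^{3/2}$ signals that $r$ itself must be taken to depend on $k/J$. Indeed $\omega$ is \emph{increasing}, so to get a \emph{lower} bound on $A_k$ I should bound $\|y_{k+1}-\tilde x_k\|$ from above in terms of how large $A_k$ already is. This is exactly what the convergence theorem provides: from \eqref{eq:converge_rate}, $\tfrac{(1-\sigma)A_i}{2\lambda_i}\|y_i-\tilde x_{i-1}\|^2\leq\|x^*\|^2$, and using $\lambda_i A_i=a_i^2$ plus $a_i\leq A_i$ yields $\|y_i-\tilde x_{i-1}\|\leq \|x^*\|\sqrt{2/(1-\sigma)}/\sqrt{A_i}\cdot(a_i/A_i)^{?}$ — I would derive $\|y_i-\tilde x_{i-1}\|\lesssim \|x^*\|\,A_i^{-3/2}\cdot(\text{poly const})$ by combining $\|y_i-\tilde x_{i-1}\|^2\leq 2\lambda_i\|x^*\|^2/((1-\sigma)A_i)$ with $\lambda_i=a_i^2/A_i\leq A_i$ replaced by the sharper $\lambda_i\leq$ something $O(1)$-controlled — actually $a_i\le A_i$ gives $\lambda_i\le A_i$, too weak; instead use $a_i^2=\lambda_i A_i$ and $a_i=A_i-A_{i-1}\le A_i$ so $\lambda_i\le A_i$, but also in the incremental regime $a_i\le\sqrt{A_i/(\rho\omega_{\min})}\cdot$const, giving $\lambda_i=a_i^2/A_i\le 1/(\rho\omega_{\min})$, hence $\|y_i-\tilde x_{i-1}\|^2\lesssim \|x^*\|^2/(A_i\,\rho\,\omega_{\min})$. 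Feeding $A_i\gtrsim$ (incremental lower bound) back in produces the self-consistent scaling $r\propto \mu\|x^*\|/(k/J)^{3/2}$, which is the argument of $\omega$ in the second term.

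The main obstacle — and the step I would spend the most care on — is this self-referential bootstrapping: the incremental growth rate of $\sqrt{A_k}$ depends on $\omega(r_k)$, $r_k$ depends (inversely, through the convergence theorem) on how big $A_k$ already is, and $\omega$ is only controlled by the log-derivative condition $0<\omega'(s)\le \gamma\omega(s)/s$, which is not used in this lemma but will matter downstream. I would handle the circularity by first obtaining the \emph{crude} bound $A_k\geq \tfrac{1}{2\omega(2\mu R)}$ (the "Further" clause): this follows from a single incremental step, since after one step $A_1=\lambda_1=a_1^2/A_1$ forces $A_1$ itself, or more simply from the $k=1$ case of the main recursion $\lambda_1\geq 1/(\rho\omega(\cdot))$ with $A_1=\lambda_1$ and the diameter bound giving argument $\le 2\mu R$; tracking $\rho=2$ and the factor yields $A_1\ge 1/(2\omega(2\mu R))$, and monotonicity of $A_k$ does the rest. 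Then I would plug this crude bound into $r_k\lesssim \|x^*\|/A_k^{3/2}$ to get a first improved bound, and iterate the argument finitely many times (or, cleaner, assume for contradiction $A_k$ is below the claimed minimum and derive that too many incremental steps must have occurred with too small $\omega$, contradicting the count $k-D>k/2$). Throughout, the bookkeeping of the constants $\rho=2$, $\mu=8/\sqrt{1-\alpha}$, $\sigma=(1+\alpha)/2$ must be kept exact to land on the stated constants $4^J$, $16$, and the argument $4\mu\|x^*\|/(k/J)^{3/2}$; I expect the $J\in(0,k/2)$ restriction to enter precisely at the "at least $k-J>k/2$ incremental steps" step, ensuring a definite fraction of the iterations are in the controllable regime.
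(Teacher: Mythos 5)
There is a genuine gap. Your ingredients are the right ones (the identity $a_{t+1}^2=\lambda_{t+1}A_{t+1}$, hence $\sqrt{A_{t+1}}-\sqrt{A_t}=\tfrac{a_{t+1}}{\sqrt{A_{t+1}}+\sqrt{A_t}}\ge\tfrac12\sqrt{\lambda_{t+1}}$; the weighted sum bound $\sum_i \tfrac{(1-\sigma)A_i}{2\lambda_i}d_i^2\le\|x^*\|^2$ from Lemma~\ref{err:bound}; monotonicity of $\omega$; a pigeonhole over growth factors), and your derivation of the ``Further'' clause and of the first branch via $A_1=\lambda_1\ge 1/(\rho\,\omega(d_1))$ is essentially the paper's. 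But the second branch never gets assembled. Your per-step doubling/incremental split plus the claimed scaling $d_i\lesssim\|x^*\|A_i^{-3/2}$ is where it breaks: the per-term inequality you can actually extract from \eqref{eq:converge_rate} is $d_i^2\le 2\lambda_i\|x^*\|^2/((1-\sigma)A_i)$, and to turn that into $d_i\lesssim\|x^*\|A_i^{-3/2}$ you would need $\lambda_i\lesssim A_i^{-2}$, which nothing provides (the generic bounds only give $\lambda_i\le A_i$, or $\lambda_i\lesssim 1/\omega_{\min}$ with an uncontrolled $\omega_{\min}$). You noticed this circularity yourself, but the proposed fix --- bootstrapping from the crude bound $A_k\ge 1/(2\omega(2\mu R))$ and iterating, or a contradiction on the count of incremental steps --- is not carried out and does not obviously converge to the exponents $(k/J)^2$ and $(k/J)^{-3/2}$; in particular the $3/2$ power is not a per-step phenomenon at all.

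The paper's proof avoids any bootstrap by a window argument. If the first branch fails, i.e.\ $A_k<4^JA_1$, pigeonhole over $J$ blocks gives indices $i\le j$ with $j-i\ge k/J$ and $A_j<4A_i$. Summing $\sqrt{A_{t+1}}-\sqrt{A_t}\ge\tfrac12\sqrt{\lambda_{t+1}}$ over that window forces at least half of the $\lambda_t$'s in it to satisfy $\lambda_t<16A_j/(j-i)^2$. Restricting the sum $\sum_t \tfrac{A_t}{\lambda_t}d_t^2\le\tfrac{2\|x^*\|^2}{1-\sigma}$ to those $\gtrsim k/J$ indices, where $A_t\ge A_i>A_j/4$, produces a factor $(k/J)^3$ and hence \emph{some} index $t$ in the window with $d_t\le 4\mu\|x^*\|/(k/J)^{3/2}$ \emph{and} $\lambda_t<16A_j/(k/J)^2$ simultaneously. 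Playing the hypothesis $\lambda_t\ge 1/(\rho\,\omega(d_t))$ against the upper bound on $\lambda_t$ then yields $A_k\ge A_j\ge (k/J)^2/\bigl(16\rho\,\omega(4\mu\|x^*\|/(k/J)^{3/2})\bigr)$ in one step. This selection of a single index where $\lambda_t$ is small in terms of $A_j$ yet large in terms of $\omega(d_t)$ is the missing idea in your proposal; without it, the doubling/incremental decomposition does not produce the stated second term.
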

\begin{proof}
Let $d_k \defeq \|y_{k } - \tilde{x}_{k-1}\|$. By \eqref{eq:converge_rate} of Lemma~\ref{err:bound} we obtain for all $k \in [K]$ 
\begin{equation}
\label{eq:bound_by_inverse_formula}
\sum_{i \in [k]} \frac{A_i}{\lambda_i} d_i^2 \leq \frac{2\|x^{*}\|^2}{1-\sigma} .
\end{equation}
Since $A_0 = 0$ we have $A_ 1 = a_1 = \lambda_1$ and consequently, \eqref{eq:bound_by_inverse_formula} yields $d_1^2 \leq \frac{2\|x^{*}\|^{2}}{1-\sigma} $ and therefore $d_1 \leq\frac{\mu}{4}\|x^*\|$. Since $\omega$ is monotonic the assumptions imply 
\[A_1 = \lambda_1 \geq \frac{1}{\rho \cdot \omega(d_1)} \geq \frac{1}{\rho \cdot \omega(\mu\|x^*\|/4)}.\] 
Since the $A_k$ increase monotonically this immediately implies $A_k \geq A_1 \geq 1/[\rho\cdot \omega(\mu \|x^*\| / 4)]$ as desired. Further, this implies that if $A_{k} \geq 4^J A_1$ then the first term in the result holds. 

On the other hand, suppose $A_{k} < 4^J A_1$. Then, for some $1 \leq i \leq j \leq k$ we have $A_{j} < 4 A_{i}$ and $|j - i|\geq k/J$. The construction of $A_k$ then implies
\begin{equation}
\label{eqn:lambda}
\sqrt{A_j}
>
\sqrt{A_j} - \sqrt{A_i}
= \sum_{t = i}^{j - 1} \left[\sqrt{A_{t +1}} - \sqrt{A_{t}}  \right]
= \sum_{t = i}^{j - 1} \frac{a_{t + 1}}{\sqrt{A_{t + 1}}+\sqrt{A_{t}}} 
\geq 
\frac{1}{2} 
\sum_{t = i}^{j - 1}
\sqrt{\lambda_{t + 1}}
\end{equation}
Hence, at least $\left\lceil \frac{j-i}{2}\right\rceil$ many $\lambda$'s
have value less than $\frac{16 A_{j}}{(j-i)^{2}}$. Letting $S$ denote the indices of these $\lambda$ we have by \eqref{eq:bound_by_inverse_formula} that
\[
\frac{2\|x^{*}\|^2}{1-\sigma}  \geq \sum_{t \in S} \frac{A_t}{\lambda_t} d_t^2
\geq 
\left\lceil \frac{j-i}{2}\right\rceil \frac{A_{i}}{\left(\frac{16A_{j}}{(j-i)^{2}}\right)} \cdot \frac{1}{|S|} \sum_{t \in S} d_t^2
\geq 
\frac{(k/J)^3}{32\cdot 4} \cdot \frac{1}{|S|} \sum_{t \in S} d_t^2
\]
Consequently, $d_t \leq \frac{16}{\sqrt{1-\sigma}}\frac{ \|x^*\|}{(k/J)^{3/2}}\leq \frac{4\mu\|x^*\|}{(k/J)^{3/2}}$ and $\lambda_t < \frac{16 A_j}{(j - i)^2} \leq \frac{16 A_j}{(k/J)^2}$ for some $t \in [k]$. However, the monotonicity of $\omega$ and the assumptions on $\lambda$ also imply 
\[\lambda_t \geq \frac{1}{\rho \cdot \omega(d_t)} \geq \frac{1}{\rho \cdot \omega(\frac{4\mu\|x^*\|}{(k/J)^{3/2}})}\]
and the result now follows by observing that
\[A_{k} \geq A_t \geq \lambda_t \frac{(k/J)^2}{16}\]
giving the second term in the result.
\end{proof}

\subsection{Putting it all together}
\label{sec:put-together}

Here we put together the analysis from the preceding sections and prove Theorem~\ref{thm:main_acceleration}. Our proof  relies on the following  theorem giving our main guarantee regarding such a line search algorithm (See Section~\ref{sec:implementation} for the proof.) 

\begin{restatable}[Line Search Algorithm]{theorem}{lsrestate}
	\label{thm:main_line_search}
	Let $g : \R^d \rightarrow \R$ be a twice differentiable function that is minimized at a point $x^* \in \R^d$ with $\|x^*\| \leq R$.  Further, let $\omega: \mathbb{R}_+ \rightarrow \mathbb{R}_+$ be a continuously differentiable function where $0<\omega'(s)\leq\gamma\frac{\omega(s)}{s}$ for some fixed $\gamma\geq1$ and all $s>0$. Further, let $\mu \defeq \frac{8}{\sqrt{1-\alpha}}$ and suppose 
	\[
	\delta \leq \min\left\{\frac{\epsilon}{\mu\cdot R\cdot  9c[(1+\alpha)c+1]} ~ , ~ 8\mu R\cdot \omega(8\mu R)\right\}
	\text{ and }
	64\left(\alpha+\frac{1}{c}\right)\gamma^2\leq1 \text{ for some } 
	c \geq 1 ~.
	\]
	Then for any inputs $x^{(1)},x^{(2)}$ with $\|x^{(1)}-x^*\|,\|x^{(2)}-x^*\| \leq \mu R$, $\frac{1}{2\omega(2\mu R)}\leq A\leq\frac{R^2}{\epsilon}$ there is an algorithm that returns $y$ and $\lambda$ such that $\widetilde{x}=\frac{a}{A+a}x^{(1)}+\frac{A}{A+a}x^{(2)}$ for $a=\frac{\lambda+\sqrt{\lambda^{2}+4\lambda A}}{2}$ that either satisfies 
	\[
	g(y)\leq g^*+\epsilon \quad \text{and} \quad \omega(\|y-\widetilde{x}\|)\cdot\|y-\widetilde{x}\|\leq c\cdot \delta
	\]
	or, satisfies
	\[
	\frac{1}{2} \leq\lambda\cdot\omega(\|y-\widetilde{x}\|)\leq 1
	\quad\text{, }\quad \omega(\|y-\widetilde{x}\|)\cdot\|y-\widetilde{x}\| > c\cdot \delta
	\text{, }
	\]
	and 
	\[
	\left\Vert \nabla g(y)+\omega(\|y-\widetilde{x}\|)\cdot(y-\widetilde{x})\right\Vert \leq \alpha \cdot\omega(\|y-\widetilde{x}\|)\cdot\|y-\widetilde{x}\|+\delta
	\]
	after 
	\[6+\log_{2}\Big[\Big(\frac{160\mu Rc}{\delta}+\frac{9R^2}{\epsilon}\Big)\cdot \omega(8c\mu R)\Big]\]
	calls to the $(\alpha, \delta)$-approximate $\omega$-proximal step oracle $\oracleprox$ for $g$.
\end{restatable}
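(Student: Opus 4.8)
The plan is to realise the line search as a bisection on the scalar $\lambda$. Fixing $\lambda$ fixes $a=\tfrac12(\lambda+\sqrt{\lambda^{2}+4\lambda A})$ and hence $\widetilde x=\tfrac{a}{A+a}x^{(1)}+\tfrac{A}{A+a}x^{(2)}$, so a single call $y=\oracleprox(\widetilde x)$ determines the two scalars $\omega(d)d$ and $\lambda\omega(d)$, where $d:=\|y-\widetilde x\|$. The algorithm maintains an interval $[\lambda_{\ell},\lambda_{r}]$ and queries arithmetic midpoints: at a query $\lambda$ it \emph{first} checks whether $\omega(d)d\le c\delta$, in which case it returns $(y,\lambda)$ (the first alternative); otherwise it checks whether $\lambda\omega(d)\in[\tfrac12,1]$, in which case it returns $(y,\lambda)$ (the second alternative); otherwise it replaces $\lambda_{\ell}$ by $\lambda$ if $\lambda\omega(d)<\tfrac12$ and $\lambda_{r}$ by $\lambda$ if $\lambda\omega(d)>1$. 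The initial endpoints are $\lambda_{\ell}^{(0)}\approx 1/\omega(8c\mu R)$ and $\lambda_{r}^{(0)}\approx \mu R/\delta$; using also $\tfrac{1}{2\omega(2\mu R)}\le A\le R^{2}/\epsilon$ and $\delta\le 8\mu R\,\omega(8\mu R)$, $\log_2(\lambda_{r}^{(0)}/\lambda_{\ell}^{(0)})$ is at most the quantity inside the claimed bound, and running for that many midpoint queries (plus the additive slack) drives $\lambda_{r}/\lambda_{\ell}$ below a fixed constant close to $1$.

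First I would establish that proximal steps cannot run away: for any $\widetilde x$ with $\|\widetilde x-x^{*}\|\le\mu R$, any admissible oracle output satisfies $\|y-\widetilde x\|\le 8c\mu R$, hence $\|y-x^{*}\|\le 9c\mu R$ (using $c\ge1$). This follows from convexity ($\langle\grad g(y),y-x^{*}\rangle\ge g(y)-g^{*}\ge0$), the oracle inequality $\|\grad g(y)+\omega(d)(y-\widetilde x)\|\le\alpha\omega(d)d+\delta$, monotonicity of $\omega$, and $\delta\le 8\mu R\,\omega(8\mu R)$, which rules out an overshoot past radius $8\mu R$. This bound gives $\omega(d)\le\omega(8c\mu R)$, which justifies $\lambda_{\ell}^{(0)}$, and when $\omega(d)d>c\delta$ it gives $\omega(d)>\delta/(8\mu R)$, which justifies $\lambda_{r}^{(0)}$. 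The two return branches are then correct. If $\omega(d)d\le c\delta$, the oracle inequality gives $\|\grad g(y)\|\le(1+\alpha)\omega(d)d+\delta\le[(1+\alpha)c+1]\delta$, so by convexity and the diameter bound $g(y)-g^{*}\le\|\grad g(y)\|\cdot 9c\mu R\le 9c[(1+\alpha)c+1]\mu R\,\delta\le\epsilon$ using the hypothesis on $\delta$, and $\omega(d)d\le c\delta$ is the remaining claimed property. If instead $\lambda\omega(d)\in[\tfrac12,1]$ (and, having passed the first check, $\omega(d)d>c\delta$), all three properties of the second alternative hold verbatim: the norm bound is exactly the oracle guarantee, and $\widetilde x$ has the stated form by construction.

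It remains to show the bisection cannot exhaust its budget without returning. Suppose it does; then $\lambda_{r}/\lambda_{\ell}$ is within a fixed constant of $1$, while $\lambda_{\ell}\omega(d_{\ell})<\tfrac12<1<\lambda_{r}\omega(d_{r})$ and both $\omega(d)d>c\delta$. Writing $\widetilde x$ through $\theta=a/(A+a)$ with $\lambda=A\theta^{2}/(1-\theta)$, one checks that a bounded multiplicative change of $\lambda$ is a small change of $\theta$, so $\|\widetilde x_{r}-\widetilde x_{\ell}\|$ is a small fraction of $\|x^{(1)}-x^{(2)}\|\le 2\mu R$. Stability of the exact proximal point of $g$ relative to $\Phi(\|\cdot-\widetilde x\|)$ under perturbation of $\widetilde x$, together with a bound on how far an admissible oracle output can lie from that exact point, then forces $d_{\ell}$ and $d_{r}$ to be close; the shape hypothesis $0<\omega'(s)\le\gamma\omega(s)/s$ therefore makes $\omega(d_{r})/\omega(d_{\ell})$ close to $1$, contradicting $\lambda_{r}\omega(d_{r})/[\lambda_{\ell}\omega(d_{\ell})]>2$ with $\lambda_{r}/\lambda_{\ell}$ near $1$. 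The conditions $64(\alpha+\tfrac1c)\gamma^{2}\le1$ and $\delta\le 8\mu R\,\omega(8\mu R)$ are what make ``close $d$'s imply close $\omega$-values'' quantitative and absorb the additive $\delta$ slack.

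I expect the main obstacle to be precisely this last step: controlling the approximate proximal map $\widetilde x\mapsto y$ — which need not even be continuous — tightly enough to exclude a tiny $\lambda$-window across which $\lambda\omega(d)$ jumps over all of $[\tfrac12,1]$. This is where the regularity of $\omega$ and the smallness of $\alpha,\gamma,\delta$ must be spent carefully, and since $\omega(0)$ may vanish it is probably cleanest to phrase the contradiction directly in terms of the increasing quantity $s\mapsto\omega(s)s$ and the exact proximal point, rather than through a modulus-of-strong-monotonicity estimate for $\grad g(\cdot)+\omega(\|\cdot-\widetilde x\|)(\cdot-\widetilde x)$.
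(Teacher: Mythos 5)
Your overall architecture is the same as the paper's: bracket the quantity $\lambda\cdot\omega(\|y-\widetilde{x}\|)$ by a bisection, return early when $\omega(d)d\le c\delta$ (your first branch is correct: the diameter bound $d\le 8c\mu R$, the gradient bound $\|\nabla g(y)\|\le[(1+\alpha)c+1]\delta$, and $\delta\le\epsilon/[9c((1+\alpha)c+1)\mu R]$ give $g(y)\le g^*+\epsilon$, exactly as in the paper), and control the discontinuous map $\widetilde{x}\mapsto y$ by comparing it with the exact proximal point of $g(\cdot)+\int_0^{\|\cdot-\widetilde{x}\|}\omega(t)t\,dt$. The genuine gap is your termination argument: you stop once $\lambda_r/\lambda_\ell$ is below a \emph{fixed} constant close to $1$ and claim this forces $d_\ell\approx d_r$. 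It does not. A constant multiplicative change of $\lambda$ still moves $\widetilde{x}$ by a constant fraction of $\|x^{(1)}-x^{(2)}\|\le 2\mu R$, and the exact proximal point drifts at rate up to about $(2+\gamma)\|x^{(1)}-x^{(2)}\|$ per unit of the interpolation parameter (the paper's Lemma~\ref{lem:z_speed}), so the \emph{absolute} change of the exact displacement can be of order $\kappa\gamma\mu R$. On the surviving bracket the only lower bound on the displacement is $\omega(d)d> c\delta$, i.e. $d\gtrsim c\delta/\omega(8c\mu R)$, which is vastly smaller than $\kappa\gamma\mu R$ for any fixed $\kappa$ (think of the application, where $\delta$ is tiny and $\gamma=p$ is large). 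Hence $d_r/d_\ell$ can be huge, and since the only control on $\omega$ is $\omega(\eta s)\le\eta^{\gamma}\omega(s)$, the ratio $\omega(d_r)/\omega(d_\ell)$ need not be below $2$, so no contradiction with $\lambda_\ell\omega(d_\ell)<\tfrac12<1<\lambda_r\omega(d_r)$ follows. This is precisely why the paper's resolution $\tau$ is not a constant: it contains terms like $c\delta/(360\mu\gamma R\,\omega(8c\mu R))$ and $1/\bigl(200(1+A\omega(8c\mu R)+4\mu R/(A\delta)+(\mu R/\delta)\omega(8c\mu R))\bigr)$, chosen so that the Lipschitz bound $|\tfrac{d}{d\theta}\log\zeta^*(\theta)|\le \tfrac{2}{1-\theta}+\tfrac1\theta+12\mu\gamma^2R/\|y^*_\theta-\widetilde{x}_\theta\|$ can be integrated across the bracket; this in turn needs a lower bound on $\|y^*_\theta-\widetilde{x}_\theta\|$ on the \emph{whole} bracket, which the paper propagates from the endpoint (via $\zeta(u)\le\tfrac34$, the bound on $u$, and the drift bound) only because $\tau$ is that small. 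With a constant-width (in $\log\lambda$) bracket this propagation fails for the same reason.

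Two further consequences. First, once you replace "fixed constant" by the correct $\delta$- and $A$-dependent resolution, your iteration count becomes roughly the sum of $\log_2(\lambda_r^{(0)}/\lambda_\ell^{(0)})$ and $\log_2(1/\tau)$, i.e. about twice the count stated in the theorem, so the specific bound $6+\log_2[(160\mu Rc/\delta+9R^2/\epsilon)\,\omega(8c\mu R)]$ is not established without redoing that bookkeeping (the paper bisects directly on $\theta\in[0,1]$, so only $\log_2(1/\tau)$ appears, and it uses $\gamma\le\sqrt{c}/8$ from $64(\alpha+1/c)\gamma^2\le1$ to absorb $\gamma$ into the stated expression). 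Second, your inexact-versus-exact comparison step, which you leave as "a bound on how far an admissible oracle output can lie from that exact point", is itself only valid on the event $\omega(d)d> c\delta$ and with the multiplicative error $1\pm 8(\alpha+1/c)\gamma$ (the paper's Lemma~\ref{lem:z_approximation}); that part of your plan is fine in spirit, and working multiplicatively in $\lambda$ pleasantly absorbs the $2/(1-\theta)+1/\theta$ terms that force the paper to bound $\theta$ away from $0$ and $1$. But the constant-ratio stopping rule is a real gap, not a matter of constants: as written, the bisection can exhaust its budget with both endpoints failing both checks and no contradiction available.
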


Leveraging this we can prove our main theorem regarding our acceleration framework. We first give this result below as a slightly more general result and then use it to immediately improve the theorem.

\begin{theorem}[General Tunable Acceleration Framework]
	\label{thm:main_acceleration_tuanbale}
	Let $g : \R^d \rightarrow \R$ be a convex twice-differentiable function minimized at $x^*$ with $\|x^*\| \leq R$, $\epsilon > 0$, $\alpha \in [0, 1)$, and $c\geq 150, \gamma \geq 1$ such that $64 (\alpha + c^{-1}) \gamma^2 \leq 1$. Further, let $\omega : \R_+ \rightarrow \R_+$ be a monotonically increasing continuously differentiable function with $0 < \omega'(s) \leq \gamma\cdot \omega(s) / s$ for all $s > 0$.  There is an algorithm which for all $k$ computes a point $y_k$ with
	\[
	g(y_{k})-g^* \leq 
	\max \Bigg\{\epsilon ~ , ~
	\frac{32\cdot\omega\left(\frac{4\mu\|x^{*}\|}{k^{3/2}}\right)\|x^{*}\|^2}{k^{2}}\Bigg\}
	\quad \text{where} \quad \mu \defeq \frac{8}{\sqrt{1-\alpha}} 
	\]
	using  $k (6+\log_2[(1500 \mu^2 R^2 c^2 [(1 + \alpha) c + 1]) \cdot \omega(8c\mu R) \cdot \epsilon^{-1}])^2$  queries to a $(\alpha, \delta)$-approximate $\omega$-proximal step oracle for $g$ and a $\delta$-approximate gradient oracle for $g$ provided  that it holds that
	$\delta \leq \epsilon/[20 \mu^3 R [(1 + \alpha)c + 1]]$ and $\epsilon \leq  72 c[(1+\alpha)c+1] (\mu R)^3 \cdot \omega(8\mu R)$.
	
\end{theorem}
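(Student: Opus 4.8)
The plan is to run Algorithm~\ref{alg:framework} with $\sigma \defeq \tfrac{1+\alpha}{2}$, so that $\tfrac{1-\sigma}{1-\alpha} = \tfrac12$ and the constants $\rho = \tfrac{1-\alpha}{1-\sigma} = 2$, $\mu = \tfrac{4\sqrt2}{\sqrt{1-\sigma}} = \tfrac{8}{\sqrt{1-\alpha}}$ of Lemmas~\ref{lem:diameter}--\ref{lem:growth_Ak} take the values asserted there, and to implement each iteration's choice of $\lambda_{k+1},y_{k+1}$ by one call to the line-search algorithm of Theorem~\ref{thm:main_line_search} run with inputs $x^{(1)}=x_k$, $x^{(2)}=y_k$, $A=A_k$, and the given $c$. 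The first task is to certify that the hypotheses of Theorem~\ref{thm:main_line_search} hold at every executed iteration: $\|x_k-x^*\|,\|y_k-x^*\|\le\mu R$ from Lemma~\ref{lem:diameter}; $\tfrac{1}{2\omega(2\mu R)}\le A_k$ from the last line of Lemma~\ref{lem:growth_Ak}; $A_k\le R^2/\epsilon$ enforced by halting as soon as $A_k$ crosses this threshold; and the two scalar conditions $64(\alpha+c^{-1})\gamma^2\le1$ and the $\delta$-bound, which follow from the corresponding hypotheses of the present theorem (non-emptiness of the admissible range for $A$ uses $\epsilon\le 72c[(1+\alpha)c+1](\mu R)^3\omega(8\mu R)$ and monotonicity of $\omega$).

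If at some iteration the line search returns a point $y$ with $g(y)\le g^*+\epsilon$, output it and stop. Otherwise it returns $(\lambda_{k+1},y_{k+1})$ with $\tfrac12\le\lambda_{k+1}\omega(d)\le1$ for $d=\|y_{k+1}-\tilde{x}_k\|$ and with $\|\nabla g(y_{k+1})+\omega(d)(y_{k+1}-\tilde{x}_k)\|\le\alpha\,\omega(d)d+\delta$; since $\tfrac{1-\sigma}{1-\alpha}=\tfrac12\le\lambda_{k+1}\omega(d)\le1$, Lemma~\ref{lem:framework_consistency} gives \eqref{eq:framework}, while $x_{k+1}$ is produced by one $\delta$-approximate gradient-oracle call so \eqref{eqn: update_x} holds, and in addition $\lambda_{k+1}\ge\tfrac1{2\omega(d)}=\tfrac1{\rho\,\omega(d)}$, which is exactly the hypothesis on the $\lambda$'s in Lemma~\ref{lem:growth_Ak}. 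Thus, barring an early return, Algorithm~\ref{alg:framework} runs validly and Lemmas~\ref{err:bound} and~\ref{lem:growth_Ak} apply to its iterates.

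To convert this into the stated rate I would combine the two bounds. Lemma~\ref{err:bound}, equation~\eqref{eq:converge_rate}, gives $g(y_k)-g^*\le\|x^*\|^2/A_k$ together with the dichotomy $g(y_k)\le g^*+\epsilon$ \emph{or} $A_k\le\|x^*\|^2/\epsilon$. In the second alternative, pick the free parameter $J$ of Lemma~\ref{lem:growth_Ak} to be $\Theta\!\big(\log[\rho\,\omega(\mu R/4)R^2/\epsilon]\big)$, large enough that $4^J/[\rho\,\omega(\mu\|x^*\|/4)]>\|x^*\|^2/\epsilon\ge A_k$; then the minimum in Lemma~\ref{lem:growth_Ak} is forced onto its second term, so $A_k\ge(k/J)^2/[16\rho\,\omega(4\mu\|x^*\|/(k/J)^{3/2})]$ and hence $g(y_k)-g^*\le32\,\omega(4\mu\|x^*\|/(k/J)^{3/2})\|x^*\|^2/(k/J)^2$. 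Re-indexing so $k/J$ is the iteration counter of the statement, and absorbing the factor $J$ --- which is of the same logarithmic order $6+\log_2[\cdots]$ as the per-iteration line-search cost --- into the square of that cost, produces exactly the claimed error bound and the query count $k(6+\log_2[\cdots])^2$ (each framework step costs one line search plus one gradient-oracle call).

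The step I expect to be the real obstacle is the circular dependence on $A_K$: Lemmas~\ref{err:bound}, \ref{lem:diameter}, and~\ref{lem:growth_Ak} all presuppose $\delta\le\|x^*\|/(\mu A_K)$, yet $A_K$ is not available a priori. I would break the circle by halting once $A_k\ge R^2/\epsilon$ (where Theorem~\ref{thm:convergence}, via $\delta_k=O(\|x^*\|^2)$, already forces $g(y_k)-g^*=O(\epsilon)$) and by using the line-search return guarantees to bound each $\lambda_{k+1}$: the branch we rely on asserts $\omega(d)\,d>c\delta$, so $d$ and hence $\omega(d)$ are bounded below in terms of $\delta$ and $\omega$, forcing $\lambda_{k+1}=O(1/\omega(d))$ bounded above and thus $\sqrt{A_{k+1}}-\sqrt{A_k}$ bounded, which yields a crude a priori $A_K=\mathrm{poly}(\text{parameters})$ bound that the assumed smallness of $\delta$ then overcomes. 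The remaining work is purely the arithmetic of matching the absolute constants ($150$, $20$, $72$, $\dots$) across Theorem~\ref{thm:main_line_search} and Lemmas~\ref{err:bound}--\ref{lem:growth_Ak}.
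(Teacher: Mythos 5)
Your proposal follows essentially the same route as the paper's own proof: run Algorithm~\ref{alg:accel_framework} with $\sigma=\tfrac{1+\alpha}{2}$, implement each iteration via the line search of Theorem~\ref{thm:main_line_search} (its hypotheses certified through Lemmas~\ref{err:bound}, \ref{lem:diameter}, and \ref{lem:growth_Ak}), break the circular dependence on $A_K$ exactly as the paper does by using the branch condition $\omega(d)\,d>c\delta$ together with the diameter bound to get $\lambda_{k+1}\le 1/\omega(d)\le d/(c\delta)$ and hence an inductive upper bound on $A_{k+1}$, and then convert Lemma~\ref{lem:growth_Ak} into the stated rate by a logarithmic choice of $J$ folded into the squared logarithmic per-iteration query cost. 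The only deviations are cosmetic (your assignment $x^{(1)}=x_k$, $x^{(2)}=y_k$ is actually the one consistent with the definition of $\tilde{x}_k$, and the constant bookkeeping you defer is precisely the level of detail the paper itself leaves implicit), so the plan is correct and matches the paper's argument.
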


\begin{proof} Consider an application of 
Algorithm~\ref{alg:accel_framework} where in each iteration $k$ we invoke Theorem~\ref{thm:main_line_search} with $x^{(1)} = y_k$, $x^{(2)} = x_k$, and $A = A_k$ to compute $y_{k + 1} = y$ and $\lambda_k = \lambda$. Now supposing that $A_k \leq R^2 / \epsilon$ and that in this invocation we choose the $\delta$ of Theorem~\ref{thm:main_line_search} to be $\delta' \defeq \min\{ \epsilon' / (\mu R) , 8 \mu R \cdot \omega(8\mu R) \} = \epsilon' / (\mu R)$ for $\epsilon' \defeq \epsilon / [9c[(1+\alpha)c+1]]$ , we have that the conditions of Lemma~\ref{err:bound} and Theorem~\ref{thm:main_line_search} are met as $\epsilon' \leq \epsilon$.
Further, if $\omega(\|y-\widetilde{x}\|_{2})\cdot\|y-\widetilde{x}\|_{2} \leq c\cdot \delta'$ then we output $y_{k + 1}$ and are guaranteed that $g(y_{k + 1}) \leq g^* + \epsilon$ by Theorem~\ref{thm:main_line_search} and the choice of parameters. 
 
 Otherwise, $\omega(\|y-\widetilde{x}\|_{2})\cdot\|y-\widetilde{x}\|_{2} > c\cdot \delta'$ and the necessary conditions are met for Algorithm~\ref{alg:accel_framework} to proceed by Lemma~\ref{lem:framework_consistency}. Further, in this case, we have that  
 \[
 \lambda_{k + 1} \leq  \frac{1}{\omega(\|y_{k + 1} -\tilde{x}_{k}\|)} 
 \leq \frac{\|y_{k + 1} -\tilde{x}_{k}\|}{c\cdot \delta'} \leq \frac{2\mu \|x^*\|}{c\cdot\delta'}\] 
 where we used Lemma~\ref{lem:diameter} for the last inequality. Furthermore, the assumption that $A_{k} \leq \frac{\|x^*\|^2}{\epsilon}$, Remark~\ref{rem:a_k}, and the assumption on $\delta$ yield (using $ab \leq \frac{1}{2}a^2+\frac{1}{2}b^2  $)
 \begin{align*}
 A_{k + 1} &= A_{k} + a_{k + 1} \leq A_{k}  + \sqrt {A_{k + 1}}\cdot \sqrt{\frac{2\mu \|x^*\|}{c\cdot\delta'}}
 \leq \frac{\|x^*\|^2}{\epsilon}+
 \frac{1}{2} A_{k + 1} + \frac{\mu \|x^*\|}{c \delta'} 
 \end{align*}
which implies that 
\[
A_{k+1} \leq \frac{2\|x^*\|^2}{\epsilon}+\frac{2\mu\|x^*\|}{c\cdot\delta'}
\leq 
 \frac{2 R^2}{\epsilon}+\frac{18 \mu^2 R^2 [(1 + \alpha)c + 1]}{\epsilon}
\leq \frac{20 \mu^2 R^2 [(1 + \alpha)c + 1]}{\epsilon}
\] 
Since, $\|x^*\|/ (\mu A_{k + 1}) \geq \epsilon/[20 \mu^3 R [(1 + \alpha)c + 1]] \geq \delta$ by the assumption $c \geq 150$, we have that Lemma~\ref{err:bound} still holds and therefore either $A_{k + 1}\leq \|x^*\|^2 / \epsilon$ or $g(y_{k+1}) - g^* \leq \epsilon$ and we can repeat the inductive argument. 
 
Consequently, if after $k$ steps we have not already returned an $\epsilon$-approximate point then we have from Lemma~\ref{err:bound} and Lemma~\ref{lem:growth_Ak} the convergence rate to an $\epsilon$-optimal point of the general framework as
\begin{align*}
g(y_{k})-g^* &\leq \frac{\|x^*\|^2}{A_{k}} \leq \min_{J\in[\frac{k}{2}]} \max\left\{\frac{2\cdot \omega(\mu\|x^*\|/4)}{4^{J}},\frac{32\cdot\omega\left(\frac{4\mu\|x^{*}\|}{(k/J)^{3/2}}\right)}{(k/J)^{2}}\right\}\|x^{*}\|^2 
\end{align*}
and the convergence rate follows by considering $J = \lceil1 + \log_4( 2\|x^*\|^2 \omega(\mu \|x^*\|/4)/\epsilon)\rceil$ and the monotonicity of $\omega$. Putting together with Theorem~\ref{thm:main_line_search}, we have that for
\begin{align*}
\mathcal{K} &\defeq \Big\lceil1 + \log_4\Big( \frac{2\|x^*\|^2 \omega(\mu \|x^*\|/4)}{\epsilon}\Big)\Big\rceil \cdot \Big(6+\log_{2}\Big[\Big(\frac{160\mu \|x^*\| c}{\delta'}+\frac{9\|x^*\|^2}{\epsilon}\Big)\cdot \omega(8c\mu \|x^*\|)\Big]\Big) \\
&\leq \Big(6+\log_2\Big[\frac{170\mu^2R^2c}{\epsilon'}\cdot\omega(8c\mu R)\Big]\Big)\cdot\Big\lceil1 +\frac{1}{2}\log_2\Big(2R^2\frac{\omega(\frac{\mu R}{4})}{\epsilon}\Big) \Big\rceil \\
&\leq \Big(6+\log_2\big[\frac{170\mu^2R^2c}{\epsilon'}\cdot\omega(8c\mu R)\big]\Big)^2
 \leq  \Big(6+\log_2\Big[\frac{1500 \mu^2 R^2 c^2 [(1 + \alpha) c + 1]}{\epsilon}\cdot\omega(8c\mu R)\Big]\Big)^2
\end{align*}
$\mathcal{K}$ queries to a $(\alpha, \delta')$-approximate $\omega$-proximal step oracle is needed at each iteration.

\end{proof}

Leveraging this, we prove Theorem~\ref{thm:main_acceleration}.

\begin{proof}[Proof of Theorem~\ref{thm:main_acceleration}]
	Consider invoking Theorem~\ref{thm:main_acceleration_tuanbale} with $c = 150 \gamma^2$. Since $\gamma \geq 1$ we have $c \geq 150$. Further, since $\alpha \leq 1/(128 \gamma^2)$ and $c^{-1} \leq 1/(128 \gamma^2)$ we have $64(\alpha + c^{-1}) \gamma^2 \leq 1$. Further, under these assumptions we have $\mu  \defeq 8/(\sqrt{1- \alpha}) \leq 10$ and $[(1 + \alpha)c  + 1] \leq 200 \gamma^2$. Consequently, $\delta$ and $\epsilon$ are constrained sufficiently to invoke Theorem~\ref{thm:main_acceleration_tuanbale} and the result follows.
\end{proof}

\section{Applications}
\label{sec:applications}

Here we briefly sketch several applications of the acceleration framework described in Section~\ref{sec:acceleration_framework}. First we show how minimizing the regularized $p$-th order Taylor approximation to $g$ is  yields an approximate $\omega$-proximal step oracle.

\begin{lem}[Accelerated Taylor Descent]
Suppose that $\nabla^p g$ is $L_p$-Lipschitz and that $\oracle(x) \defeq \argmin_{y}\; g_p(y;x) + \frac{L_p + L}{p!} \|y - x\|^{p + 1}$ where 
	$g_p(y;x)$ is the value of the $p$'th order Taylor approximation of $g$ about $x$ evaluated at $y$ and $L \geq 0$. Then, $\oracleprox$ is a $((1 + p)^{-1} (1 + L/L_p)^{-1},0)$-approximate $\omega$-proximal step oracle (Definition~\ref{def:prox_oracle}) for $\omega(d) \defeq \frac{(L_p + L) \cdot (p + 1)}{p!} d^{p -1}$.
\end{lem}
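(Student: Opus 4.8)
The plan is to use only two ingredients: the gradient form of Taylor's remainder for a function with Lipschitz $p$-th derivative, and the first-order optimality condition defining $\oracle(x)$.

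First I would record the remainder bound. Since $\grad^p g$ is $L_p$-Lipschitz, the $p$-th order Taylor expansion $g_p(\cdot\,;x)$ of $g$ about $x$ satisfies
\[
\|\grad g(y) - \grad g_p(y;x)\| \le \frac{L_p}{p!}\,\|y - x\|^{p}
\qquad \text{for all } y \in \R^d,
\]
which follows by writing $\grad g(y) - \grad g_p(y;x)$ as an iterated integral of $\grad^p g$ along $[x,y]$ minus its value $\grad^p g(x)$ and applying the Lipschitz bound. (Note the constant here is $L_p/p!$, governing the \emph{gradient} remainder, as opposed to the $L_p/(p+1)!$ constant for the function-value remainder.)

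Next I would write down stationarity. Set $y \defeq \oracle(x)$. The regularizer $\frac{L_p+L}{p!}\|y-x\|^{p+1}$ has gradient in $y$ equal to $\frac{(L_p+L)(p+1)}{p!}\|y-x\|^{p-1}(y-x) = \omega(\|y-x\|)(y-x)$, with $\omega$ exactly as in the statement. The objective $g_p(y;x) + \frac{L_p+L}{p!}\|y-x\|^{p+1}$ is coercive in $y$ (the degree-$(p+1)$ regularizer dominates the degree-$p$ polynomial $g_p(\cdot\,;x)$), so a global minimizer exists and at it the gradient vanishes:
\[
\grad g_p(y;x) + \omega(\|y-x\|)(y-x) = 0.
\]
Adding and subtracting $\grad g_p(y;x)$ then gives $\grad g(y) + \omega(\|y-x\|)(y-x) = \grad g(y) - \grad g_p(y;x)$, so by the remainder bound
\[
\big\|\grad g(y) + \omega(\|y-x\|)(y-x)\big\| \le \frac{L_p}{p!}\,\|y-x\|^{p}.
\]
Finally I would rewrite the right-hand side using $\omega(\|y-x\|)\,\|y-x\| = \frac{(L_p+L)(p+1)}{p!}\|y-x\|^{p}$, obtaining
\[
\frac{L_p}{p!}\|y-x\|^{p}
= \frac{L_p}{(L_p+L)(p+1)}\,\omega(\|y-x\|)\,\|y-x\|
= \frac{1}{(1+p)(1+L/L_p)}\,\omega(\|y-x\|)\,\|y-x\|,
\]
which is precisely the condition \eqref{eq:oracle_prox_condition} of Definition~\ref{def:prox_oracle} with $\alpha = (1+p)^{-1}(1+L/L_p)^{-1}$ and $\delta = 0$; monotonicity of $\omega$ is immediate since $p \ge 1$.

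I do not expect a genuine obstacle here: the argument is a two-line manipulation once the Taylor gradient-remainder estimate is in hand. The only points warranting care are (i) using the sharp constant $L_p/p!$ in that estimate rather than the function-value constant, and (ii) justifying that the $\argmin$ is attained so that first-order stationarity applies — note we do \emph{not} need the regularized Taylor model to be convex, only coercive.
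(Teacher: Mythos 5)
Your proposal is correct and follows essentially the same argument as the paper: first-order optimality of the regularized Taylor model gives $\nabla_y g_p(y;x) = -\omega(\|y-x\|)(y-x)$, and the gradient-form Taylor remainder bound $\|\nabla g(y)-\nabla_y g_p(y;x)\|\le \frac{L_p}{p!}\|y-x\|^p = \frac{L_p}{(p+1)(L_p+L)}\omega(\|y-x\|)\|y-x\|$ yields the claimed $\alpha$ with $\delta=0$. Your added remarks on coercivity of the model and on the $L_p/p!$ versus $L_p/(p+1)!$ constants are fine but not points of divergence from the paper's proof.
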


\begin{proof}
	Let $y = \oracleprox(x)$ for arbitrary $x$. The optimality conditions of $y$ yield that 
	\[
	\grad_y g_p(y;x) = \frac{(p + 1)(L_p + L)}{p!} \norm{y -x}^{p - 1} (x -y) =  \omega (\norm{y- x}) (x - y) ~.
	\]
	Further, since Taylor expansion of $\grad g(y)$ yields
\begin{align*}
\|\nabla g(y)+ \omega(\|y - x\|) (y - x) \|
&= \|\grad g(y) - \grad_y g_p(y;x)\| \leq \frac{L_p}{p!} \norm{y - x}^p \\
&= \frac{L_p}{(1 + p)(L_p + L)} \omega(\| y - x\|) \|y - x\|
\end{align*}
	the result follows by observing that $\alpha =  (1 + p)^{-1} (1 + L/L_p)^{-1}$ and $\delta = 0$, as claimed.
\end{proof}

Now, note that for $\omega(d)$ defined in this lemma we have that $\omega'(d) = (p - 2) \omega(s)/s$. Consequently, with respect to Theorem~\ref{thm:main_acceleration} we have that $\gamma = p - 2$ and $\alpha = (1 + p)^{-1} (1 + L/L_p)^{-1}$ for the oracle defined in this lemma. Consequently, by picking $L = \mathcal{O}(L_p \cdot \mathrm{poly}(p))$ this oracle satisfies the necessary conditions of the theorems and therefore (up to logarithmic factors) with $k$ queries to the oracle and a gradient oracle invoking Theorem~\ref{thm:main_acceleration} yields that one can compute a point $y_k$ with 
\[g(y_{k})-g^* \lesssim \frac{\omega(\frac{\|x^*\|}{k^{3/2}})\|x^*\|^2}{k^2} \lesssim \frac{(L_p + L)\cdot (p+1)\cdot\|x^*\|^{p+1}}{p! \cdot k^\frac{3p+1}{2}}\, .\]
This matches the rate of  \citep{Gasnikov18, JWZ18, BJLLS8}  up to polylogarithmic factors. 

Next we show how approximately minimizing a regularization of $g$ yields an approximate $\omega$-proximal step oracle.

\begin{lem}[Approximate Proximal Point]
Suppose that $g$ is $L$-smooth and convex and that $\oracle(x)$ is a point $y_x$ where for $G_x(y) \defeq g(y) + \frac{\kappa}{2} \|y - x\|^2$ we have $G_x(y_x) - G_x^* \leq \rho$ where $G_x^*$ is the minimum value of $G_x$. Then, $\oracleprox$ is a $(0,\rho (L + \kappa))$-approximate $\omega$-proximal step oracle (Definition~\ref{def:prox_oracle}) for $\omega(d) \defeq \kappa$.
\end{lem}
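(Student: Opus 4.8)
The plan is to reduce the claim to the textbook fact that, for a smooth convex function, the gradient norm at a point is controlled by that point's suboptimality — applied not to $g$ but to the regularized objective $G_x(y) \defeq g(y) + \frac{\kappa}{2}\|y-x\|^2$ that the oracle $\oracle$ approximately minimizes.

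First I would record the structure of $G_x$: it is convex (indeed $\kappa$-strongly convex) as the sum of the convex $g$ and a convex quadratic, and it is $(L+\kappa)$-smooth since $g$ is $L$-smooth and $y \mapsto \frac{\kappa}{2}\|y-x\|^2$ is $\kappa$-smooth; in particular $G_x$ attains its minimum, so $G_x^*$ is well defined. Second, I would invoke the standard inequality that for any $M$-smooth convex $h$ with minimum value $h^*$ one has $\|\nabla h(y)\|^2 \leq 2M\,(h(y)-h^*)$ for all $y$ (obtained by applying the smoothness upper bound at the point $y - \frac1M\nabla h(y)$ and using $h^* \leq h(y - \frac1M\nabla h(y))$). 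Applied with $h = G_x$, $M = L+\kappa$, $y = y_x = \oracle(x)$ and the hypothesis $G_x(y_x) - G_x^* \leq \rho$, this yields $\|\nabla G_x(y_x)\| \leq \sqrt{2\rho(L+\kappa)}$. Third, I would expand $\nabla G_x(y_x) = \nabla g(y_x) + \kappa (y_x - x)$ and note that, with $\omega \equiv \kappa$, the left-hand side of the oracle condition \eqref{eq:oracle_prox_condition} evaluated at $y = y_x$ is exactly $\|\nabla g(y_x) + \omega(\|y_x - x\|)(y_x - x)\| = \|\nabla G_x(y_x)\|$, while its right-hand side with $\alpha = 0$ is just the additive error $\delta$. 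Hence $\oracleprox \defeq \oracle$ satisfies Definition~\ref{def:prox_oracle} with $\alpha = 0$ and this value of $\delta$, identifying $\oracleprox$ with $\oracle$.

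There is essentially no substantive obstacle; every step is routine. The only point requiring care is bookkeeping of the additive error: the gradient-versus-gap bound naturally produces $\delta = \sqrt{2\rho(L+\kappa)}$, so matching it to the parameter $\rho(L+\kappa)$ written in the statement — i.e. pinning down in which units the approximation parameter $\rho$ is to be measured — is the one place where the constants/exponents must be tracked carefully; the remainder is a two-line computation.
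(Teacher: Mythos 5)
Your high-level route is the same as the paper's: bound $\|\nabla G_x(y_x)\| = \|\nabla g(y_x) + \kappa(y_x - x)\|$ using the $(L+\kappa)$-smoothness of $G_x$ together with the hypothesis $G_x(y_x) - G_x^* \leq \rho$, and observe that with $\omega \equiv \kappa$ this quantity is exactly the left-hand side of \eqref{eq:oracle_prox_condition}, so the oracle property holds with $\alpha = 0$. The problem is the final parameter, which you flag but do not resolve: the standard inequality $\|\nabla h(y)\|^2 \leq 2M\,(h(y)-h^*)$ for an $M$-smooth convex $h$ yields $\delta = \sqrt{2\rho(L+\kappa)}$, whereas the lemma claims $\delta = \rho(L+\kappa)$. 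These are not interchangeable: writing $t = \rho(L+\kappa)$, one has $\sqrt{2t} \leq t$ only when $t \geq 2$, so the bound you prove is strictly weaker than the stated one precisely in the relevant regime of small $\rho$ (the discussion following the lemma takes $\rho = \mathcal{O}(\epsilon/[\|x^*\|(L+\kappa)])$). Hence ``tracking the constants carefully'' cannot close this step; as written, your argument establishes a $(0, \sqrt{2\rho(L+\kappa)})$-approximate $\omega$-proximal step oracle, not the $(0, \rho(L+\kappa))$-approximate one asserted.

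For comparison, the paper's one-line proof asserts $\rho \geq \frac{1}{L+\kappa}\|\nabla G_x(y_x)\|$ directly from $(L+\kappa)$-smoothness, i.e.\ a linear (non-squared) relation between the gradient norm and the function gap; this is what produces $\delta = \rho(L+\kappa)$, but it is not the standard smoothness inequality either, so the square-root-versus-linear discrepancy you noticed is genuine and traces back to the paper rather than to a slip on your part. To make your write-up airtight you should do one of two things: either (i) state and prove the lemma with $\delta = \sqrt{2\rho(L+\kappa)}$ and note that the subsequent application still goes through with $\rho = \mathcal{O}(\epsilon^2/[(L+\kappa)\|x^*\|^2])$, which costs only logarithmic factors since the $\kappa$-strongly convex subproblem $G_x$ can be minimized at a linear rate; or (ii) strengthen the hypothesis to a direct bound on the residual, e.g.\ $\|\nabla G_x(y_x)\| \leq \rho(L+\kappa)$, after which the conclusion is immediate. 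Your remaining steps (convexity, $\kappa$-strong convexity and $(L+\kappa)$-smoothness of $G_x$, existence of its minimizer, and the identification of $\nabla G_x(y_x)$ with the expression in Definition~\ref{def:prox_oracle}) are all correct.
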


\begin{proof}
Since $G$ is $L+\kappa$-smooth we have that
\[
\rho \geq \frac{1}{L + \kappa} \norm{\grad G_x(y_x)} = \frac{1}{L + \kappa} \|\grad g(y_x) + \kappa(y_x - x)\| ~.
\]	
The result follows by observing that $\alpha = 0$ and $\delta = \rho (L + \kappa)$, as claimed.
\end{proof}

Now, note that for $\omega(d)$ defined in this lemma we have that $\omega'(d) = 0$. Consequently, with respect to Theorem~\ref{thm:main_acceleration} we have that $\gamma =0$ and $\alpha = 0$ for the oracle defined in this lemma. Consequently, this oracle satisfies the necessary conditions of the theorems for some $\epsilon$ so long as $\rho = \mathcal{O}(\epsilon/[\|x^*\| (L + \kappa)])$ and therefore (up to logarithmic factors) with $k$ queries to the oracle and a gradient oracle invoking Theorem~\ref{thm:main_acceleration} yields that one can compute a point $y_k$ with 
\[g(y_{k})-g^* \lesssim \frac{\omega(\frac{\|x^*\|}{k^{3/2}})\|x^*\|^2}{k^2} \lesssim \frac{\kappa \cdot\|x^*\|^{2}}{k^2}\, .\]
This matches the rate of  \citep{FrostigGKS15, LinMH15}  up to polylogarithmic factors with slightly stronger assumptions. We leave it to future work to use this framework to fully generalize this result and develop further applications.

\section{Upper bound}
\label{sec:parallel_upper}

Here we provide the proofs associated with Section~\ref{sec:highly_parallel_optimization} and prove Theorem~\ref{thm:parallel_min_main}. Our proof is split into several parts. In Section~\ref{sec:convolution} we provide basic facts about the convolved function we optimize, 
in Section~\ref{sec:noisy_gradient} we analyze our algorithm for approximating the gradient, in Section~\ref{sec:approx_step_implement} we analyze our algorithm for computing an approximate proximal step, and in Section~\ref{sec:parallel_complexity} we put everything together to prove Theorem~\ref{thm:parallel_min_main}. 

Throughout this section we use $\|\cdot\|_{op}$ to denote the operator norm of a matrix and $D$ as the differential operator.

\subsection{Gaussian convolution for approximation}
\label{sec:convolution}

Here we prove Lemma~\ref{lem:convolution_err} which provides basic facts about $g$, e.g. convexity and continuity, that we use throughout our analysis.

\begin{proof}[Proof of Lemma~\ref{lem:convolution_err}]
Since $g$ is a weighted linear combination of shifted $f$, i.e.
\[g(y) = \int_{\mathbb{R}^d}\gamma_{r}(x)f(y-x)dx\]
and as $f$ is convex, so is $g$. Similarly, we have $g$ is $L$-Lipschitz.
Finally, we note that
\[
|g(y)-f(y)|\leq\int_{\R^d}\gamma_{r}(y-x)|f(x)-f(y)|dx 
\leq L \int_{\mathbb{R}^d}\gamma_{r}(y-x)\|x-y\|_{2}dx
=L \cdot \E_{x\sim\gamma_{r}}\|x\|_{2}\leq L\sqrt{d}\cdot r
\]
where we used $\E_{x\sim\gamma_{r}}\|x\|_{2}\leq\sqrt{\E_{x\sim\gamma_{r}}\|x\|_{2}^{2}}\leq\sqrt{d}\cdot r$.

Next, we note that $\nabla g=\gamma_{r}\ast\nabla f$ and hence $\nabla^2 g = \nabla \gamma_r\ast \nabla f$
\[
v^{\top}\nabla^{2}g(y)v=\int_{\mathbb{R}^d}\gamma_{r}(y-x)\cdot\left\langle -\frac{y-x}{r^{2}},v\right\rangle \cdot\left\langle \nabla f(x),v\right\rangle dy.
\]
So we have for any $\|v\|_2 = 1$, by the fact that $f$ is $L$-Lipschitz that 
\begin{align*}
\left|v^{\top}\nabla^{2}g(y)v\right|  \leq\frac{L}{r}\cdot\int_{\mathbb{R}^d}\gamma_{r}(y-x)\left|\left\langle \frac{y-x}{r},v\right\rangle \right|dy
 =\frac{L}{r}\cdot\E_{\zeta\sim \mathcal{N}(0,1)}|\zeta|
 =\frac{L}{r}\cdot \sqrt{\frac{2}{\pi}}\leq \frac{L}{r}. 
\end{align*}
and therefore $\|\nabla^{2}g(y)\|_{\op}\leq\frac{L}{r}$.
\end{proof}

\subsection{Noisy gradient oracle: sampling}
\label{sec:noisy_gradient}

In this section we prove Lemma~\ref{lem:uniform-approximation} bounding the performance of Algorithm~\ref{alg:apx} for approximating the gradient of $g$. We begin by studying each sampled vector in Algorithm \ref{alg:apx}.

\begin{lem}[Statistics of one sample]
\label{lem:mean_bound}Given a $L$-Lipschitz function
$f$ on $\mathbb{R}^{d}$, a vector $c$, radius $r>0$, and error
parameter $1>\eta>0$. Sample $x$ according to $\gamma{}_{r}(x-c)$.
Define the vector field 
\[
\ell(y)\defeq\frac{\gamma_{r}(y-x)}{\gamma{}_{r}(c-x)}\cdot\nabla f(x)\cdot\chi((x-c)^{\top}(y-c))\cdot1_{\|x-c\|\leq(\sqrt{d}+\frac{1}{\eta})r}.
\]
For any $y$ such that $\|y-c\|\leq\frac{\eta}{4}r$, we have that
\begin{align*}
\|\E\ell(y)-\nabla(\gamma_{r}\ast f)(y)\|_{2} & \leq2L\cdot\exp(-\frac{1}{2\eta^{2}}),\\
\|\ell(y)\|_{2} & \leq3L,\\
\|D\ell(y)\|_{\op} & \leq\frac{20L\sqrt{d}}{r\eta}.
\end{align*}
\end{lem}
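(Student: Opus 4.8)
The plan is to estimate the three quantities in turn, all by the same change-of-variables trick: writing the sample as $x = c + rz$ with $z \sim \mathcal N(0, I_d)$, the density ratio $\gamma_r(y-x)/\gamma_r(c-x)$ becomes $\exp\big(-\frac{1}{2r^2}(\|y-x\|^2 - \|c-x\|^2)\big) = \exp\big(\frac{1}{r}\langle z, y-c\rangle - \frac{\|y-c\|^2}{2r^2}\big)$, a quantity I can control pointwise for $\|y-c\| \le \frac{\eta}{4} r$. First I would handle the bias. The key identity is that, \emph{without} the cutoffs $\chi$ and $1_{\|x-c\|\le(\sqrt d + 1/\eta)r}$, the expectation of $\frac{\gamma_r(y-x)}{\gamma_r(c-x)}\nabla f(x)$ over $x\sim\gamma_r(\cdot - c)$ equals exactly $\nabla(\gamma_r * f)(y)$ — this is just the importance-sampling re-centering of the Gaussian from mean $c$ to mean $y$, using $\nabla g = \gamma_r * \nabla f$ from Lemma~\ref{lem:convolution_err}'s proof. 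Hence $\|\E\ell(y) - \nabla g(y)\|$ is bounded by the expectation of $\frac{\gamma_r(y-x)}{\gamma_r(c-x)}\|\nabla f(x)\|$ over the region where \emph{either} cutoff is active; using $\|\nabla f\| \le L$, bounding the density ratio by $\exp\big(\frac{1}{r}\|z\|\cdot\frac{\eta r}{4} + 0\big)$ on the relevant events, and then a Gaussian tail bound on $\{|z\cdot u|\ge r/\,\text{(something)}\} \cup \{\|z\| \ge \sqrt d + 1/\eta\}$ — the cutoff $\chi$ kills $|(x-c)^\top(y-c)| \ge r^2/2$, i.e. roughly $|z\cdot(y-c)| \ge r/2$, which since $\|y-c\|\le \eta r/4$ forces $\|z\| \gtrsim 1/\eta$ — yields the $\exp(-1/(2\eta^2))$ rate after absorbing constants.

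For the second bound, $\|\ell(y)\| \le 3L$: here I use that $\ell(y)$ is nonzero only when $\chi \ne 0$ and $\|x-c\|\le (\sqrt d + 1/\eta)r$, on which event $\|z\|\le \sqrt d + 1/\eta$ and $|z\cdot(y-c)|\le r^2$ so that the density ratio is at most $\exp\big(\frac{1}{r}\cdot\frac{r^2}{r}\cdot\frac{?}{}\ldots\big)$ — more carefully, $\frac{1}{r}\langle z, y-c\rangle \le \frac{1}{r}\cdot\frac{r^2}{r} \cdot$ — actually the clean route is $\frac1r\langle z,y-c\rangle \le \frac{|(x-c)^\top(y-c)|}{r^2} \le 1$ on $\{\chi\ne 0\}$, so the ratio is at most $e^{1} \le 3$, and then $\|\ell(y)\|\le e\cdot L \le 3L$. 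For the Jacobian bound $\|D\ell(y)\|_{\op} \le \frac{20L\sqrt d}{r\eta}$: differentiating $\ell(y)$ in $y$ hits either the Gaussian ratio — producing a factor $\nabla_y\big(\frac1r\langle z,y-c\rangle - \frac{\|y-c\|^2}{2r^2}\big) = \frac{z}{r} - \frac{y-c}{r^2}$, of norm $\le \frac{\sqrt d + 1/\eta}{r} + \frac{\eta/4}{r} \lesssim \frac{\sqrt d}{r} + \frac{1}{r\eta}$ on the support — or the Lipschitz bump $\chi$, whose derivative is $\frac{2}{r^2}$ in magnitude times $|x-c|\le(\sqrt d + 1/\eta)r$, again giving $\lesssim \frac{\sqrt d}{r} + \frac{1}{r\eta}$; multiplying by the $\le 3L$ size bound and collecting the (crude) constants gives $\frac{20 L\sqrt d}{r\eta}$, where I use $\eta < 1$ to merge $\sqrt d/r$ and $1/(r\eta)$ into a single $\sqrt d/(r\eta)$ term provided $\sqrt d \ge 1$.

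I expect the main obstacle to be the bias estimate: one has to be careful that the truncation events are genuinely rare \emph{relative to the inflated density ratio}, i.e. the Gaussian tail must beat the $\exp(\frac1r\langle z, y-c\rangle)$ growth. Since $\|y-c\|\le \eta r/4$ the exponent grows like $\frac{\eta}{4}\|z\|$ while the probability density decays like $e^{-\|z\|^2/2}$, so the product is integrable and concentrated; the delicate point is matching the constant in the exponent of the tail bound for $\{|z\cdot u| \gtrsim 1/\eta\}$ (which is $\exp(-\Theta(1/\eta^2))$) against the target $\exp(-1/(2\eta^2))$, which forces the precise constants in the definition of $\chi$ (threshold $r^2/2$) and in the radial cutoff $(\sqrt d + 1/\eta)r$. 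The other two bounds are routine once the change of variables is set up, and I would present them tersely. Throughout I would record the elementary facts $\|z\cdot u\| \le \|z\|\|u\|$, the one-dimensional Gaussian tail $\Pr(|\zeta| \ge t) \le 2e^{-t^2/2}$, and $\E\|z\| \le \sqrt d$, and feed them into the three estimates without belaboring the arithmetic.
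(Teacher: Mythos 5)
Your bounds on $\|\ell(y)\|$ and $\|D\ell(y)\|_{\op}$ are essentially the paper's argument: the ratio is at most $e$ on the support of $\chi$ because $\frac1r\langle z,y-c\rangle\le |(x-c)^{\top}(y-c)|/r^2\le 1$, and the Jacobian picks up either the gradient of the log-ratio (norm $\lesssim(\sqrt d+1/\eta)/r$ on the truncated support) or $\chi'$ times $\|x-c\|$, giving $20L\sqrt d/(r\eta)$ after merging terms with $\eta<1$, $\sqrt d\ge1$. Those parts are fine.

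The bias estimate, however, has a genuine gap as you have set it up. After using $\|\nabla f\|\le L$, the error is $L\,\E_{x\sim\gamma_r(\cdot-c)}\bigl[\tfrac{\gamma_r(y-x)}{\gamma_r(x-c)}\,\mathbf{1}_{\mathrm{bad}}\bigr]$, and you propose to bound the ratio pointwise by $\exp(\eta\|z\|/4)$ and then invoke Gaussian tail bounds for the bad events under the $c$-centered measure. This does not yield $2L\exp(-1/(2\eta^2))$: on the radial cutoff event $\|z\|\ge\sqrt d+1/\eta$ (and indeed on typical points of the sphere of radius $\approx\sqrt d$ relevant to the inner-product event) the factor $\exp(\eta\|z\|/4)$ is of size $\exp(\Theta(\eta\sqrt d))$, so integrating ratio times indicator this way produces a bound of order $\exp(\eta\sqrt d/4-1/(2\eta^2))$, which is enormous in the intended regime ($\eta\approx1/\sqrt{\log(1/\epsilon)}$, $d$ large). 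Your closing remark correctly senses a tension between the ratio growth and the Gaussian decay, but locates the delicate point in the constant of the $1/\eta^2$ exponent rather than in this dimension-dependent factor; a Cauchy--Schwarz patch avoids the $\sqrt d$ loss but degrades the exponent to $-1/(4\eta^2)$. The clean fix — and what the paper does — is to never bound the ratio in the bias term at all: the sampling density cancels it exactly, so
\[
\|\E\ell(y)-\nabla(\gamma_r\ast f)(y)\|\le L\int_{\R^d}\gamma_r(y-x)\,\beta(y,x)\,dx\le L\cdot\Prob_{x\sim\gamma_r(y-\cdot)}\bigl[\beta(y,x)>0\bigr],
\]
where $\beta=1-\chi(\cdot)\mathbf{1}_{\|x-c\|\le(\sqrt d+1/\eta)r}\in[0,1]$, and then both bad events are bounded under the Gaussian centered at $y$: the radial event by the chi-square tail, and the event $|(x-c)^{\top}(y-c)|>r^2/2$ by noting that under this measure $(x-c)^{\top}(y-c)\sim\mathcal N(\|y-c\|^2,\|y-c\|^2r^2)$ with $\|y-c\|\le\eta r/4$; each has probability at most $\exp(-1/(2\eta^2))$, giving the stated $2L\exp(-1/(2\eta^2))$ with no dimension dependence. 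You already have this re-centering identity in hand for the untruncated term — you need to keep using it on the truncation error rather than switching to a pointwise ratio bound.
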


\begin{proof} For the bias, we note that 
\begin{align*}
\E\ell(y) & =\int_{\R^{d}}\frac{\gamma_{r}(y-x)}{\gamma{}_{r}(x-c)}\cdot\nabla f(x)\cdot\chi((x-c)^{\top}(y-c))\cdot\gamma{}_{r}(x-c)\cdot1_{\|x-c\|\leq(\sqrt{d}+\frac{1}{\eta})r}\,dx\\
 & =\int_{\R^{d}}\gamma_{r}(y-x)\cdot\nabla f(x)\cdot\chi((x-c)^{\top}(y-c))\cdot1_{\|x-c\|\leq(\sqrt{d}+\frac{1}{\eta})r}\,dx\\
 & =\nabla(\gamma_{r}\ast f)(y)-\int_{\R^{d}}\gamma_{r}(y-x)\cdot\nabla f(x)\cdot\beta(y,x)\,dx
\end{align*}
where
\[
\beta(y,x)=1-\chi((x-c)^{\top}(y-c))\cdot1_{\|x-c\|\leq(\sqrt{d}+\frac{1}{\eta})r}.
\]
Since $1\geq\beta(y,x)\geq0$ for all $x,y$, we have
\begin{align*}
\|\E\ell(y)-\nabla(\gamma_{r}\ast f)(y)\|_{2} & \leq\int_{\R^{d}}\gamma_{r}(y-x)\cdot\|\nabla f(x)\|_{2}\cdot\beta(y,x)\,dx\\
 & \leq L\cdot\int_{\R^{d}}\gamma_{r}(y-x)\cdot\beta(y,x)\,dx\\
 & \leq L\cdot\Prob_{x}\left[\beta(y,x)>0\right].
\end{align*}

Now, we note that $\beta(y,x)>0$ implies either $\|x-c\|>(\sqrt{d}+\frac{1}{\eta})r$
or $|(x-c)^{\top}(y-c)|>\frac{r^{2}}{2}$.

By a tail bound of Chi-square distribution \citep{laurent2000}, we have
\begin{equation}
\Prob_{x}\left(\|x-c\|>\left(\sqrt{d}+\frac{1}{\eta}\right)r\right)\leq\exp\left(-\frac{1}{2\eta^{2}}\right).\label{eq:mean_bound_case_1}
\end{equation}

Next, we note that for any fixed $c$ and $y$, $(x-c)^{\top}(y-c)$
follows the normal distribution $\mathcal{N}(\|y-c\|^{2},\|y-c\|^{2}r^{2})$
when $x$ is sampled from $\gamma_{r}(y-x)$. By the assumption that
$\|y-c\|\leq\frac{\eta}{4}r \leq \frac{r}{4}$, we have that 
\begin{align}
\Prob_{x}\left[|(x-c)^{\top}(y-c)|>\frac{r^{2}}{2}\right] & \leq\Prob_{\zeta\sim\mathcal{N}(0,1)}\Big(|\zeta|\geq\frac{r^{2}}{4\|y-c\|r}\Big)\nonumber  \leq\exp\Big(-\frac{r^{2}}{32\|y-c\|^{2}}\Big)\nonumber \\
 & \leq\exp\left(-\frac{1}{2\eta^{2}} \right).\label{eq:mean_bound_case_2}
\end{align}

Union bound over case \eqref{eq:mean_bound_case_1} and case \eqref{eq:mean_bound_case_2}
gives that $\Prob_{x}\left[\beta(y,x)>0\right]\leq2\exp(-\frac{1}{2\eta^{2}}).$
This gives the bound on $\E\ell(y)$.

For the bound on $\|\ell\|$, we note from the Lipschitz assumption
of $f$ that 
\begin{align*}
\|\ell(y)\|_{2} & \leq L\cdot\frac{\gamma_{r}(y-x)}{\gamma{}_{r}(x-c)}\cdot\chi((x-c)^{\top}(y-c)).\\
 & \leq L\cdot\frac{\gamma_{r}(y-x)}{\gamma{}_{r}(x-c)}\cdot1_{|(x-c)^{\top}(y-c)|\leq r^{2}}
\end{align*}
For any $x$ with $|(x-c)^{\top}(y-c)|\leq r^{2}$, we have that
\begin{align}
\log\frac{\gamma_{r}(y-x)}{\gamma{}_{r}(x-c)} & =-\frac{1}{2r^{2}}\|y-x\|_{2}^{2}+\frac{1}{2r^{2}}\|c-x\|_{2}^{2}\nonumber \\
 & =\frac{1}{2r^{2}}\left(-2(c-x)^{\top}(y-c)-\|y-c\|_{2}^{2}\right)\nonumber \\
 & \leq\frac{|(x-c)^{\top}(y-c)|}{r^{2}}<1.\label{eq:change_N}
\end{align}
Hence, we have $\|\ell(y)\|_{2}\leq3L$.

For the bound of the Jacobian of $\ell$, we note that 
\begin{align*}
D\ell(y)= & \frac{\gamma_{r}(y-x)}{\gamma{}_{r}(x-c)}\cdot\nabla f(x)\cdot\Big(-\frac{y-x}{r^{2}}\Big)^{\top}\cdot\chi((x-c)^{\top}(y-c))\cdot1_{\|x-c\|\leq(\sqrt{d}+\frac{1}{\eta})r}\\
 & +\frac{\gamma_{r}(y-x)}{\gamma{}_{r}(c-x)}\cdot\nabla f(x)\cdot(x-c)^{\top}\chi'((x-c)^{\top}(y-c))\cdot1_{\|x-c\|\leq(\sqrt{d}+\frac{1}{\eta})r}.
\end{align*}
Since the Lipschitz constant of $\chi$ is bounded by $\frac{2}{r^{2}}$
and the Lipschitz assumption of $f$ is bounded by $L$, \eqref{eq:change_N}
and the above equation shows that
\begin{align*}
\|D\ell(y)\|_{\op} & \leq e\cdot L\cdot\frac{\|y-x\|_{2}}{r^{2}}\cdot1_{\|x-c\|\leq(\sqrt{d}+\frac{1}{\eta})r}+e\cdot L\cdot\|x-c\|_{2}\cdot\frac{2}{r^{2}}\cdot1_{\|x-c\|\leq(\sqrt{d}+\frac{1}{\eta})r}\\
&\leq e\cdot L\cdot \frac{(\sqrt{d}+\frac{1}{\eta}+\frac{\eta}{4})r}{r^2}+2e\cdot L\frac{(\sqrt{d}+\frac{1}{\eta})r}{r^2}\\
& \leq\frac{Lr}{r^{2}}+\frac{9L}{r}\cdot \left(\sqrt{d}+\frac{1}{\eta} \right)\leq\frac{20L\sqrt{d}}{r\eta}.
\end{align*}
\end{proof}

By a concentration and $\epsilon$-net argument we use Lemma~\ref{lem:mean_bound} to prove Lemma~\ref{lem:uniform-approximation}.

\begin{proof}[Proof of Lemma~\ref{lem:uniform-approximation}] Fix $y$ such that $\|y-c\|_{2}\leq\frac{\eta}{4}r$ and let $v(y)-\nabla g(y)=\frac{1}{N}\sum_{i=1}^{N}\epsilon^{(i)}$
be the sum of $N$ independent vectors $\epsilon^{(i)}$ corresponding to the sample $\ell$ of Lemma \ref{lem:mean_bound}. Lemma~\ref{lem:mean_bound}
shows that 
\[
\|\E\epsilon^{(i)}\|_{2}\leq2L\cdot\exp\Big(-\frac{1}{2\eta^{2}}\Big)\text{ and }\|\epsilon^{(i)}\|_{2}\leq3L+L=4L.
\]
Pinelis's inequality \citep{pinelis1994} shows that
\[
\Prob\left(\left\|\frac{1}{N}\sum_{i=1}^{N}\epsilon^{(i)}\right\|_{2}\geq2L\cdot\exp \left(-\frac{1}{2\eta^{2}} \right)+4L\cdot t\right)\leq2\exp\left(-\frac{Nt^{2}}{2}\right).
\]

To make this holds for all $y$ with $\|y-c\|_{2}\leq\frac{\eta}{4}r$,
we pick an $\epsilon$-net $\mathcal{N}$ on $\{y:\|y-c\|_{2}\leq\frac{\eta}{4}r\}$
with $\epsilon=\frac{\eta}{4}r\cdot\frac{\exp(-\frac{1}{2\eta^{2}})}{3\sqrt{d}}$.
It is known that $|\mathcal{N}_{\epsilon}(B_d(0,r))|\leq(\frac{3r}{\epsilon})^{d}$, therefore, using $0<\eta\leq1$
\[
|\mathcal{N}|\leq\left(\frac{3\frac{\eta}{4}r}{\frac{\eta}{4}r\cdot\frac{\exp(-\frac{1}{2\eta^{2}})}{3\sqrt{d}}}\right)^{d}=\left(9\sqrt{d}\right)^{d}\exp\left(\frac{d}{2\eta^{2}}\right)\leq\exp\left(\frac{d\log(81d)}{\eta^{2}}\right).
\]
For any $y$ with $\|y-c\|_{2}\leq\frac{\eta}{4}r$, there is $y'\in\mathcal{N}$
with $\|y'-y\|_{2}\leq\epsilon$, therefore by Lemma \ref{lem:convolution_err}
we have 
\[
\|\nabla g(y')-\nabla g(y)\|\leq\frac{L\epsilon}{r}\leq L\cdot\exp \left(-\frac{1}{2\eta^{2}} \right).
\]
Lemma \ref{lem:mean_bound} shows that $\|Dv(y)\|_{\op}\leq\frac{20L\sqrt{d}}{r\eta}$. Hence, we have 
\[
\|v(y')-v(y)\|_{2}\leq\epsilon\cdot\frac{20L\sqrt{d}}{r\eta}\leq2L\exp\left(-\frac{1}{2\eta^{2}}\right).
\]
Taking the union bound on $\mathcal{N}$, we have that
\[
\Prob\left(\max_{y:\|y-c\|_{2}\leq \frac{\eta}{4}r}\|v(y)-\nabla g(y)\|_{2}\geq5L\cdot\exp\left(-\frac{1}{2\eta^{2}}\right)+4L\cdot t\right)\leq2\exp\left(\frac{d\log(81d)}{\eta^{2}}-\frac{Nt^{2}}{2}\right).
\]
Setting $2\exp\left(\frac{d\log(81d)}{\eta^{2}}-\frac{Nt^{2}}{2}\right)=\delta$,
we get
\[
4L\cdot t\leq\frac{4L}{\sqrt{N}}\sqrt{\frac{2d\log(81d)}{\eta^{2}}+2\log\frac{2}{\delta}}\leq\frac{8L}{\sqrt{N}}\sqrt{\frac{d\log(9d)}{\eta^{2}}+\log\frac{1}{\delta}}
\]
on the LHS. 
\end{proof}

\subsection{Approximate proximal step oracle implementation}
\label{sec:approx_step_implement}

Here we prove the following theorem which bounds the performance of Algorithm~\ref{alg:apx-2}.

\begin{theorem} \label{thm:optimization_oracle} Algorithm \ref{alg:apx-2}
	outputs $y$ such that 
	$
	\| \nabla g(y)+\omega(\|y-c\|)\cdot(y-c) \| \leq L\cdot\epsilon
	$
	in $\mathcal{O}(\frac{p\sqrt{d}}{\epsilon^{2}})$ iterations with
	$N=\mathcal{O}([d\log d\log(\frac{1}{\epsilon})+\log(\frac{1}{\delta})]\epsilon^{-2})$
	oracle calls to $f$ in parallel with probability at least $1-\delta$ where $\omega$ is defined by (\ref{eq:Lip_omega}) with $\tilde{r}=\frac{r}{8\sqrt{\log\left(\frac{60}{\epsilon}\right)}}$.
\end{theorem}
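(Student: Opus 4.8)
The plan is to view Algorithm~\ref{alg:apx-2} as inexact gradient descent on the convex function $F(y)\defeq g(y)+\Phi(\|y-c\|)$, whose gradient is $\nabla F(y)=\nabla g(y)+\omega(\|y-c\|)(y-c)$; thus the vector $\delta_y=v(y)+\omega(\|y-c\|)(y-c)$ formed in the loop is exactly $\nabla F(y)$ up to the bias $v(y)-\nabla g(y)$. First I would invoke Lemma~\ref{lem:uniform-approximation} with target accuracy $\epsilon/6$ (which is precisely the call the algorithm makes to Algorithm~\ref{alg:apx}): with probability $\ge 1-\delta$, using $N=\mathcal O([d\log d\log(1/\epsilon)+\log(1/\delta)]\epsilon^{-2})$ parallel samples, the field $v$ satisfies $\|v(y)-\nabla g(y)\|\le L\epsilon/6$ on the ball $B\defeq\{y:\|y-c\|\le\tilde r\}$, and I would condition on this event; everything afterwards is deterministic. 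On $B$, Lemma~\ref{lem:convolution_err} gives $\nabla^2 g\preceq (L/\tilde r)I$, and since $\omega(s)=4Ls^p/\tilde r^{p+1}$ has $\omega'(s)=p\,\omega(s)/s$, the Hessian of $y\mapsto\Phi(\|y-c\|)$ has operator norm at most $(p+1)\,\omega(\tilde r)=4(p+1)L/\tilde r$ there; hence $F$ is $\beta$-smooth on $B$ with $\beta=O(pL/\tilde r)$, and moreover (using $\|Dv\|_{\op}=O(L\sqrt d/\tilde r)$ inherited from Lemma~\ref{lem:mean_bound}) the whole map $y\mapsto\delta_y$ is $O(pL\sqrt d/\tilde r)$-Lipschitz on $B$, so that $\beta h\le \tfrac{3}{16}$ and $h=\tilde r/(48p\sqrt d L)$ is of the order of the reciprocal of that Lipschitz constant.

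\textbf{Invariance.} I would show by induction that all iterates stay in $B$. For the step, write $y_{k+1}-c=(1-h\,\omega(s))(y_k-c)-h\,v(y_k)$ with $s=\|y_k-c\|\le\tilde r$; since $h\,\omega(s)\le h\,\omega(\tilde r)=1/(12p\sqrt d)<1$ and $\|v(y_k)\|\le\|\nabla g\|+L\epsilon/6\le 2L$, this gives $\|y_{k+1}-c\|\le(1-h\,\omega(s))s+2Lh$. A short computation (Bernoulli's inequality $(1-t)^{p+1}\ge1-(p+1)t$ together with the identity $\omega(\tilde r)\tilde r=4L$ and $4Lh(p+1)\le\tilde r$) shows this quantity is at most $\tilde r$; the content is that near $\partial B$ the regularizer $\Phi$ grows fast enough to pull the iterate back inside, and $h$ is small enough not to overshoot.

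\textbf{Progress and termination.} While the stopping rule has not fired, $\|\delta_{y_k}\|>5L\epsilon/6$, so $\|\nabla F(y_k)\|\ge\|\delta_{y_k}\|-L\epsilon/6>2L\epsilon/3$ and the bias obeys $\|v(y_k)-\nabla g(y_k)\|\le\tfrac14\|\nabla F(y_k)\|$; plugging $y_{k+1}-y_k=-h\big(\nabla F(y_k)+(v(y_k)-\nabla g(y_k))\big)$ into the $\beta$-smoothness inequality for $F$ on $B$ (valid since $y_k,y_{k+1}\in B$), and using $\beta h\le\tfrac{3}{16}$, yields the standard inexact-descent bound $F(y_{k+1})\le F(y_k)-\tfrac h2\|\nabla F(y_k)\|^2\le F(y_k)-\tfrac29 hL^2\epsilon^2$. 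Since $F(y_0)=g(c)$ and the minimizer $y^{*}$ of $F$ satisfies $\|y^{*}-c\|<\tilde r$ (because $\omega(\|y^{*}-c\|)\|y^{*}-c\|=\|\nabla g(y^{*})\|\le L=\tfrac14\omega(\tilde r)\tilde r$), the $L$-Lipschitzness of $g$ gives $F(y_0)-\min F\le L\tilde r$; dividing by the per-step decrease shows the loop halts after at most $\tfrac{9\tilde r}{2hL\epsilon^2}=O(p\sqrt d/\epsilon^2)$ iterations. At termination $\|\delta_y\|\le 5L\epsilon/6$ forces $\|\nabla g(y)+\omega(\|y-c\|)(y-c)\|=\|\nabla F(y)\|\le\|\delta_y\|+L\epsilon/6\le L\epsilon$, which is the asserted guarantee; the only oracle access is the single parallel batch of $N$ gradient samples used by Algorithm~\ref{alg:apx}, since the loop itself only re-evaluates the fixed field $v$.

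\textbf{Main obstacle.} The delicate point is the invariance step: Algorithm~\ref{alg:apx-2} calls Algorithm~\ref{alg:apx} with exactly the radius $\tilde r$ on which $v$ is certified to approximate $\nabla g$, leaving no margin, so one must verify that the iterates never leave $B$, which forces one to use both the super-linear growth of $\Phi$ at $\partial B$ (the exact relation $\omega(\tilde r)\tilde r=4L$) and the precise choice of step size. A related subtlety is that $v$ is not itself a gradient field, so the descent analysis must be carried out against the genuine convex function $F$ with $v(y)-\nabla g(y)$ treated as additive gradient error of size $L\epsilon/6$, and one must ensure this error stays dominated throughout — which is exactly the role of the gap between the stopping threshold $5L\epsilon/6$ and the approximation error $L\epsilon/6$.
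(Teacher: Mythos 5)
Your proposal is correct and follows essentially the same route as the paper's proof: condition on the uniform approximation event of Lemma~\ref{lem:uniform-approximation}, show by induction (using $\omega(\tilde r)\tilde r = 4L$ and the step-size choice) that the iterates never leave the ball of radius $\tilde r$, then run an inexact-gradient-descent analysis on $\mathcal{L}(y)=g(y)+\Phi(\|y-c\|)$ with the stopping rule guaranteeing $\|\nabla\mathcal{L}\|\gtrsim L\epsilon$ per step, and bound the iteration count by the range of $\mathcal{L}$. The only differences are cosmetic constants (your smoothness bound $O(pL/\tilde r)$ versus the paper's $6L\sqrt d/\tilde r$, and your initial-gap bound $L\tilde r$ via locating the minimizer versus the paper's Lipschitz bound $10L\tilde r$), which do not change the argument or the stated complexity.
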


\begin{proof}[Proof of Theorem~\ref{thm:optimization_oracle}] First, we need to prove that $y$ stays inside $\|y-c\|_{2}\leq\tilde{r}$.
Given this, the correctness of the output follows from the error bound
on $v$ and the stopping condition.

We prove $\|y-c\|_{2}\leq\tilde{r}$ by induction. Let $y'$ be the
one step from $y$, namely $y'=y-h\cdot\delta_{y}$. Then, we have
\begin{align*}
\|y'-c\|_{2} & \leq\|y-h\cdot\omega(\|y-c\|_{2})\cdot(y-c)-c\|_{2}+h\|v(y)\|_{2}\\
 & =\left|1-h\cdot\omega(\|y-c\|_{2})\right|\|y-c\|_{2}+\frac{4}{3}Lh
\end{align*}
where we used the induction hypothesis $\|y-c\|_{2}\leq\tilde{r}$
and the approximation guarantee to show that $\|v(y)\|_{2}\leq\|v(y)-\nabla g(y)\|_{2}+\|\nabla g(y)\|_{2}\leq\frac{L\epsilon}{6}+L\leq\frac{4}{3}L$.
Next, we note from the assumption on step size that 
\[
h\cdot\omega(\|y-c\|_{2})\leq h\cdot\frac{4L\tilde{r}^{p}}{\tilde{r}^{p+1}}\leq1.
\]
Hence, we have 
\begin{align*}
\|y'-c\|_{2} & \leq\left(1-h\cdot\omega(\|y-c\|_{2})\right)\|y-c\|_{2}+\frac{4}{3}Lh\\
 & =\|y-c\|_{2}-h\omega(\|y-c\|_{2})\|y-c\|_{2}+\frac{4}{3}Lh.
\end{align*}
Note that $\frac{4}{3}Lh\leq\frac{\tilde{r}}{3p}$. Hence, if $\|y-c\|_{2}\leq\left(1-\frac{1}{3p}\right)\tilde{r}$,
we know that $\|y'-c\|_{2}\leq\tilde{r}$. Otherwise if $\|y-c\|_{2}\geq\left(1-\frac{1}{3p}\right)\tilde{r}$,
we know that 
\[
\omega(\|y-c\|_{2})\|y-c\|_{2}\geq\frac{4L}{\tilde{r}^{p+1}}\left(1-\frac{1}{3p}\right)^{p + 1}\tilde{r}^{p+1}\geq\frac{4}{3}L
\]
which implies $\|y'-c\|_{2}\leq\|y-c\|_{2}$. Hence, in both cases,
we have $\|y'-c\|_{2}\leq\tilde{r}$. This completes the induction.

Finally, we need to bound the number of iterations before the algorithm
terminates. Let $\mathcal{L}(y):=g(y)+\Phi(\|y-c\|_{2})$ where $\Phi$
is defined in \eqref{eq:Lip_Phi}. By Lemma \ref{lem:convolution_err},
we have that 
\[
\nabla^{2}\mathcal{L}\preceq\Big(\frac{L}{r}+\frac{5L\sqrt{d}}{\tilde{r}}\Big)\cdot I_{d}\preceq\frac{6L\sqrt{d}}{\tilde{r}}\cdot I_{d} 
\]
Hence, by smoothness we have 
\[
\mathcal{L}(y')\leq\mathcal{L}(y)-h\left\langle \nabla L(y),\delta_{y}\right\rangle +3\frac{L}{\tilde{r}}\sqrt{d}\cdot h^{2}\|\delta_{y}\|^{2}.
\]
Note that $\delta_{y}=\nabla\mathcal{L}(y)+\eta$ for some vector
$\eta$ such that $\|\eta\|_{2}\leq L\cdot\frac{\epsilon}{6}$ by
the approximation guarantee. Therefore 
\begin{align*}
\mathcal{L}(y') & \leq\mathcal{L}(y)-h\|\nabla\mathcal{L}(y)\|^{2}+h\|\nabla\mathcal{L}(y)\|\|\eta\|+3\frac{L}{\tilde{r}}\sqrt{d}h^{2}(2\|\nabla\mathcal{L}(y)\|^{2}+2\|\eta\|^{2})\\
 & \leq\mathcal{L}(y)-\frac{7h}{8}\|\nabla\mathcal{L}(y)\|^{2}+h\|\nabla\mathcal{L}(y)\|\|\eta\|+\frac{h}{8}\|\eta\|^{2}\\
 & \leq\mathcal{L}(y)-\frac{7h}{8}\|\nabla\mathcal{L}(y)\|^{2}+\frac{h}{2}\|\nabla\mathcal{L}(y)\|^{2}+\frac{h}{2}\|\eta\|^{2}+\frac{h}{8}\|\eta\|^{2}\\
 & \leq\mathcal{L}(y)-\frac{7h}{8}\Big(L\cdot\frac{2\epsilon}{3}\Big)^{2}+\frac{5h}{8}\Big(L\cdot\frac{\epsilon}{6}\Big)^{2}\\
 & =\mathcal{L}(y)-\frac{h}{3}L^{2}\epsilon^{2}=\mathcal{L}(y)-\frac{\tilde{r}L\epsilon^{2}}{144p\sqrt{d}}
\end{align*}
where we used that $\|\nabla\mathcal{L}(y)\|\geq\|\delta_{y}\|-\|\eta\|\geq\frac{2\epsilon}{3}L$
from the stopping criteria. This shows that $\mathcal{L}$ decreased
by $\frac{\tilde{r}L\epsilon^{2}}{144p\sqrt{d}}$ every iteration.
Since $\mathcal{L}$ has Lipschitz constant $L+4L=5L$ on $\|y-c\|\leq\tilde{r}$,
\[
\max_{\|y-c\|\leq\tilde{r}}\mathcal{L}(y)-\min_{\|y-c\|\leq\tilde{r}}\mathcal{L}(y)\leq10L\tilde{r}.
\]
Therefore the number of step is at most $\mathcal{O}(\frac{p\sqrt{d}}{\epsilon^{2}})$
and we have 
\[
\left\Vert \nabla g(y)+\omega(\|y-c\|_{2})\cdot(y-c)\right\Vert _{2}\leq\frac{L\epsilon}{6}+\frac{5\epsilon}{6}L\leq L\cdot\epsilon
\]
as claimed. \end{proof}

The above theorem shows that we can implement (1) a noisy gradient
oracle with $\beta=\frac{L\cdot\epsilon}{6}$; and (2) an optimization
oracle with $\alpha=0$ and $\delta=L\cdot\epsilon$. Since by Theorem~\ref{thm:optimization_oracle}
we have $\|y_{k+1}-\tilde{x}_{k}\|\leq\tilde{r}$, i.e., the output
of the optimization oracle is bounded in a ball of radius $\tilde{r}$
from the center, therefore $v_{y_{k+1}}$ as the vector
field formed by sampling satisfies $\|v_{y_{k+1}}-\nabla g(y_{k+1})\|\leq\frac{\delta}{6}$,
justifying its validity as a noisy gradient oracle at $y_{k+1}$.

\subsection{Parallel complexity}
\label{sec:parallel_complexity}

Here we show how to put everything together to prove Theorem~\ref{thm:parallel_min_main}, our main highly-parallel optimization result.

\begin{proof}[Proof of Theorem~\ref{thm:parallel_min_main}]
Invoking the result of Section~\ref{sec:acceleration} and following the discussion in Section~\ref{sec:convolution}, with $r = \frac{\epsilon}{\sqrt{d}L}$, we have $\tilde{r} = \frac{r}{\sqrt{\log(\frac{60}{\epsilon'})}} = \frac{\epsilon}{L\sqrt{d\log(\frac{60}{\epsilon'})}}$ and since 
\[\omega(x) = \frac{4Lx^p}{\tilde{r}^{p+1}} = \frac{4L^{p+2}x^p[d\log(\frac{60}{\epsilon'})]^{\frac{p+1}{2}}}{\epsilon^{p+1}}\, ,\]
from Theorem~\ref{thm:main_acceleration} we have for $\frac{\gamma^2}{c} =\frac{p^2}{c} \leq \frac{1}{64}$, the convergence rate to an $\epsilon$-optimal point as 
\begin{align*}
f(y_{k})-f^{*} 
=\mathcal{O}\Big( \frac{\omega(\frac{\|x^*\|}{k^{3/2}})}{k^2}\|x^*\|^2\Big)
=\mathcal{O}\Big( \frac{ L^{p+2}\|x^*\|^p [d\log(\frac{1}{\epsilon'})]^{\frac{p+1}{2}}}{\epsilon^{p+1}\cdot k^2\cdot k^{\frac{3p}{2}}} \|x^*\|^2\Big)
\end{align*}
with $\mathcal{O}\Big(\frac{d\log d\log(\frac{1}{\epsilon'})+\log(\frac{1}{\rho})}{\epsilon'^{2}}\times \mathcal{K} \Big)$ (sub)gradient queries to $f$ in parallel in each round for $\epsilon' =  \mathcal{O}(\frac{\epsilon}{\|x^*\|\cdot L})$, as required by the accuracy for which the optimization oracle is implemented in Theorem~\ref{thm:main_acceleration} and the number of proximal oracle calls the line search procedure needs where 
\[
\mathcal{K}\defeq \Big(6+\log_2\Big[\frac{1500 \mu^2 R^2 c^2 [(1 + \alpha) c + 1]}{\epsilon}\omega(8c\mu R)\Big]\Big)^2
 = \mathcal{O}\Big(\log^2 \Big[\frac{L^{p+2}\|x^*\|^{p + 2}[d\log(\frac{1}{\epsilon'})]^{\frac{p+1}{2}}}{\epsilon^{p+2}}\Big]\Big) ~.
 \]
Setting the result to the desired accuracy $\epsilon$, we have that it suffices to pick $k = K$ for 
\begin{align*}
K &= \mathcal{O} \Big( \Big[ L^{p+2}\cdot  \|x^*\|^{p+2}\Big]^{\frac{2}{3p+4}}\cdot \Big[\frac{[d\log(\frac{\|x^*\|\cdot L}{\epsilon})]^{\frac{p+1}{2}}}{\epsilon^{p+2}}\Big]^{\frac{2}{3p+4}}\Big)\\
& =\mathcal{O} \Big(  \Big[L^{p+2}\cdot  \|x^*\|^{p+2}\Big]^{\frac{2}{3p+4}}\cdot \Big(\frac{d}{\epsilon^2}\Big)^{\frac{p+1}{3p+4}}\Big(\frac{1}{\epsilon}\Big)^{\frac{2}{3p+4}}\cdot \Big[\log\Big(\frac{\|x^*\|\cdot L}{\epsilon}\Big)\Big]^{\frac{p+1}{3p+4}}\Big)
\end{align*}
Picking $p$ such that $\log(\frac{d}{\epsilon^2}) = 3(3p+4)$, end up with
\[K =\mathcal{O}\Big( \Big(\frac{d}{\epsilon^2}\Big)^{\frac{1}{3}}\cdot\Big(\frac{1}{\epsilon}\Big)^{\frac{1}{\log(d/\epsilon^{2})}} \cdot \log^{\frac{1}{3}}\Big(\frac{1}{\epsilon}\Big) \cdot \Big(\log\Big(\frac{1}{\epsilon}\Big)\Big)^{\frac{1}{\log(d/\epsilon^{2})}}\Big)\]
which is $\tilde{\mathcal{O}}(d^{1/3}\epsilon^{-2/3})$, as claimed. Setting $\rho = \mathcal{O}(\frac{\nu}{K})$ for the algorithm to succeed with probability at least $1-\nu$, denote $\eta\defeq \log(\frac{d}{\epsilon^2})$ the number of parallel (sub)gradient queries is 
\begin{align*}
\mathcal{O}&\Big(\frac{d\log d\log(\frac{1}{\epsilon})+\log(d^{1/3}\epsilon^{-2/3}/\nu)}{\epsilon^{2}}\times \mathcal{K}\Big)\\
&=\mathcal{O}\Big(\frac{d\log d\log(\frac{1}{\epsilon})+\log(d^{1/3}\epsilon^{-2/3}/\nu)}{\epsilon^{2}}\times\log^2 \Big[\frac{[d\log(\frac{1}{\epsilon})]^{\frac{p+1}{2}}}{\epsilon^{p+2}}\Big]\Big)\\
&=\mathcal{O}\Big(\frac{d\log d\log(\frac{1}{\epsilon})+\log(d^{1/3}\epsilon^{-2/3}/\nu)}{\epsilon^{2}}\times\log^2 \Big[\frac{[d\log(\frac{1}{\epsilon})]^{\frac{1}{18}\eta-\frac{1}{6}}}{\epsilon^{\frac{1}{9}\eta+\frac{2}{3}}}\Big]\Big)
\end{align*}

With the choice of $p$, it suffices to pick $c$ large enough such that $\frac{81c}{64} \geq (\log(\frac{d}{\epsilon^2})-12)^2$ for the assumption to hold.
\end{proof}

\global\long\def\Rd{\mathbb{R}^{d}}

\section{Line search implementation}
\label{sec:implementation}

In this section, we assume access to an $(\alpha, \delta)$-approximate $\omega$-proximal step oracle $\oracleprox$ for a convex function $g$. The goal is to use $\oracleprox$ to find a point $y$ that satisfies Lemma~\ref{lem:framework_consistency}, as required by the algorithm framework at each iteration. The main result of this section is to prove the following theorem, originally stated in Appendix~\ref{sec:acceleration}. 

\lsrestate*

We assume $\delta\leq \frac{\epsilon}{\mu\cdot R\cdot 9c((1+\alpha)c+1)}$
to make sure
the oracle gives out information for different $x$ (and therefore we can achieve sufficiently small error). Furthermore, we assume $\delta \leq 8\mu R\cdot\omega(8\mu R)$ to ensure that if both $x$ and $y$ lie in a radius $\mu R$ ball then $\alpha\cdot\omega(\|y-x\|_{2})\cdot\|y-x\|_{2}$ is bounded by $2\mu R\cdot\omega(2\mu R)$. So if $\delta$ is much larger than this, the oracle essentially can always output the same $y$ regardless of $x$.

\subsection{Line search algorithm}

To simplify the notation, for the remainder of this section we make the following definitions. For all $\theta \in [0,1]$ we let $\widetilde{x}_{\theta}\defeq(1-\theta)x^{(1)}+\theta x^{(2)}$, $y_{\theta} \defeq \oracleprox(\widetilde{x}_{\theta})$, and $\lambda_{\theta} \defeq \frac{(1-\theta)^{2}A}{\theta}$. In this notation, our goal is to find $\theta \in [0, 1]$ such that
\begin{equation}
\frac{1}{2}\leq\zeta(\theta)\leq 1 \quad\text{where}\quad\zeta(\theta) \defeq \lambda_{\theta}\cdot\omega(\|y_{\theta}-\widetilde{x}_{\theta}\|_{2}) ~.\label{eq:zeta}
\end{equation}
Further, we let $\epsilon' \defeq \epsilon / ( 9c((1+\alpha)c+1) )$.

First, we show that $\zeta(0)=+\infty$ and $\zeta(1)=0$ or otherwise,
we find an approximate minimizer).
\begin{lem}
$\zeta(1)=0$ and either $\zeta(0)=+\infty$ or $g(x^{(1)})\leq g(x^{*})+\epsilon' \leq g(x^*) + \epsilon$.
\end{lem}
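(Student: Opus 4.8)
The plan is a direct case analysis from the definitions $\widetilde x_\theta=(1-\theta)x^{(1)}+\theta x^{(2)}$, $y_\theta=\oracleprox(\widetilde x_\theta)$, $\lambda_\theta=(1-\theta)^2A/\theta$, and $\zeta(\theta)=\lambda_\theta\,\omega(\|y_\theta-\widetilde x_\theta\|_2)$. For $\theta=1$ I would simply note that $\lambda_1=0$ while $\|y_1-\widetilde x_1\|_2$ is a finite nonnegative number, so $\omega(\|y_1-\widetilde x_1\|_2)$ is finite and $\zeta(1)=0$.

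For $\theta=0$ one has $\widetilde x_0=x^{(1)}$ and $\lambda_0=A/0$; since the hypotheses of Theorem~\ref{thm:main_line_search} give $A\ge \frac1{2\omega(2\mu R)}>0$, this is $+\infty$, and hence $\zeta(0)=+\infty$ unless $\omega(\|y_0-x^{(1)}\|_2)=0$. I would next observe that $\omega$ is strictly positive on $(0,\infty)$ — if $\omega(s)=0$ for some $s>0$ then $\gamma\,\omega(s)/s=0<\omega'(s)$, contradicting the standing assumption on $\omega'$ — so $\omega(\|y_0-x^{(1)}\|_2)=0$ forces $y_0=\oracleprox(x^{(1)})=x^{(1)}$. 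Thus the only way $\zeta(0)$ can fail to equal $+\infty$ is that the proximal oracle is a fixed point at $x^{(1)}$.

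It remains to handle that degenerate case, which carries the actual content. When $\oracleprox(x^{(1)})=x^{(1)}$, the oracle guarantee \eqref{eq:oracle_prox_condition} at $x=y=x^{(1)}$ (so $\|y-x\|_2=0$) collapses to $\|\nabla g(x^{(1)})\|_2\le\delta$; convexity of $g$ then gives $g(x^{(1)})-g(x^*)\le \nabla g(x^{(1)})^\top(x^{(1)}-x^*)\le \delta\,\|x^{(1)}-x^*\|_2\le \delta\mu R$ via the input bound $\|x^{(1)}-x^*\|_2\le\mu R$ of Theorem~\ref{thm:main_line_search}, and the standing assumption $\delta\le \epsilon/(\mu R\cdot 9c((1+\alpha)c+1))$ together with $9c((1+\alpha)c+1)\ge1$ yields $g(x^{(1)})-g(x^*)\le\epsilon'\le\epsilon$. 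The only delicate point is the bookkeeping around the indeterminate product $(+\infty)\cdot 0$ at $\theta=0$: recognizing that $\zeta(0)$ escapes being $+\infty$ precisely when $\oracleprox$ fixes $x^{(1)}$, and that in exactly that situation the oracle's error bound degenerates to the gradient-norm bound certifying near-optimality of $x^{(1)}$.
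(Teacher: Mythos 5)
Your proposal is correct and follows essentially the same route as the paper: $\zeta(1)=0$ from $\lambda_1=0$, and at $\theta=0$ a case split in which either $\zeta(0)=+\infty$ or $y_0=\oracleprox(x^{(1)})=x^{(1)}$, so that the oracle condition degenerates to $\|\nabla g(x^{(1)})\|\le\delta$ and convexity with $\|x^{(1)}-x^*\|\le\mu R$ and $\delta\le\epsilon'/(\mu R)$ gives $g(x^{(1)})\le g(x^*)+\epsilon'\le g(x^*)+\epsilon$. Your extra observation that $\omega(s)>0$ for $s>0$ (forced by $0<\omega'(s)\le\gamma\,\omega(s)/s$) just makes explicit a positivity fact the paper uses implicitly.
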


\begin{proof}
The claim $\zeta(1)=0$ follows immediately from definition. For the next claim, recall that by the definition of the $(\alpha,\delta)$ proximal oracle, we have that $y_0 = \oracleprox(x^{(1)})$ satisfies
\[
\norm{\nabla g(y_0)+\omega(\|y_0-x^{(1)}\|)\cdot(y_0-x^{(1)})} 
\leq\alpha\cdot\omega(\|y_0-x^{(1)}\|)\cdot\|y_0-x^{(1)}\|+\delta.
\]
If $\|y_0-x^{(1)}\|=0$,
we have $y_0=x^{(1)}$ and hence $\|\nabla g(x^{(1)})\|\leq\delta.$
By convexity of $g$, we have that from the assumption on diameter 
\[
g(x^{(1)})\leq g(x^{*})+\delta\|x^{(1)}-x^{*}\|_{2}\leq g(x^{*})+\mu\delta R\leq g(x^*)+\epsilon'.
\]
where we used $\delta \leq \frac{\epsilon'}{\mu\cdot R}$ at the end. Otherwise, we have $\|y_0-x^{(1)}\|>0$ therefore
$\zeta(0)=+\infty$ and $\zeta(1)=0$ from the definition.
\end{proof}

Therefore, to find $\theta$ such that $\zeta(\theta)=\frac{3}{4}$,
we simply perform binary search. In particular, in $\log_{2}(\frac{1}{\tau})$
iterations, we can find $0\leq\ell\leq u\leq1$ with $|\ell-u|\leq\tau$
such that $\zeta(\ell)-\frac{3}{4}$ and $\zeta(u)-\frac{3}{4}$ have
different signs. See Algorithm \ref{alg:binary} for the algorithm details. The key question is how small $\tau$
we need to make sure $\frac{1}{2}\leq\zeta(\frac{\ell+u}{2})\leq1$.

The difficultly here is that $\zeta$ may not be continuous. Therefore,
we cannot bound the Lipschitz constant of $\zeta$ directly. In contrast to previous papers \citep{BJLLS8}, our proof does not depend
on how we implement the proximal oracle $\oracleprox$ and do not directly assume how
$\oracleprox(x)$ changes with respect to $x$. In fact, the oracle
$\oracleprox$ we constructed in Section~\ref{sec:parallel_upper} may not even give
the same output for the same input. Therefore, it is difficult to
bound how far $\oracleprox(x)$ changes under the change of $\lambda$.
To avoid this problem, we first relate the noisy oracle $\oracleprox$
with the ideal oracle with $\alpha=\delta=0$. We note that the ideal
oracle is exactly performing a proximal step as follows:

\begin{lem}[Exact Proximal Map]
\label{lem:exact_prox}
Given $x$, let $y^*:= \mathcal{O}(x) :=\argmin_{y}\, G(y)$ where
\[
G(y) \defeq g(y)+W(\|y-x\|_{2})\quad\text{with}\quad W(s)\defeq\int_{0}^{s}\omega(u)\cdot u\,du
\]
then $\mathcal{O}$ is a $(0,0)$ proximal oracle for $g$. Further,
$\nabla^{2}G(y)\succeq\omega(\|y-x\|_{2})\cdot I$
for any $x$.
\end{lem}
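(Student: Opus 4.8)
The plan is to recognize $G$ as a convex regularization of $g$ whose unique minimizer is characterized by a first-order optimality condition that is \emph{exactly} the $(0,0)$ proximal condition, and then to read off the Hessian bound from a direct chain-rule computation. First I would check that $\mathcal{O}(x)$ is well defined. Write $G = g + \phi$ where $\phi(y)\defeq W(\|y-x\|)$. Since $W'(s)=\omega(s)\,s$ is nonnegative and increasing (because $\omega>0$ is increasing), $W$ is convex and nondecreasing, so $\phi$ is convex and hence $G$ is convex; moreover $W(s)\to\infty$ superlinearly in $s$ while the differentiable convex $g$ is bounded below by an affine function along rays, so $G$ is coercive and attains its minimum, and the minimizer is unique because (as shown below) $\nabla^2 G\succ 0$ off the single point $y=x$. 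Thus $\mathcal{O}$ is a well-defined map.

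Next I would compute $\nabla G$. By the chain rule, $\nabla\phi(y)=W'(\|y-x\|)\cdot\frac{y-x}{\|y-x\|}=\omega(\|y-x\|)\,(y-x)$, so $\nabla G(y)=\nabla g(y)+\omega(\|y-x\|)\,(y-x)$. Evaluating the stationarity condition $\nabla G(y^*)=0$ at $y^*=\mathcal{O}(x)$ gives $\nabla g(y^*)+\omega(\|y^*-x\|)(y^*-x)=0$, which is precisely the defining inequality (\ref{eq:oracle_prox_condition}) of an $(\alpha,\delta)$-approximate $\omega$-proximal step oracle with $\alpha=\delta=0$. This establishes the first claim.

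For the Hessian bound, at any $y\neq x$ set $r=\|y-x\|$ and $u=(y-x)/r$. Differentiating $\nabla\phi(y)=\omega(r)(y-x)$ gives $\nabla^2\phi(y)=\omega(r)\,I+\omega'(r)\,r\,uu^\top$. Since $\omega'(r)>0$ and $uu^\top\succeq 0$, the last term is PSD, so $\nabla^2\phi(y)\succeq\omega(r)\,I$; adding $\nabla^2 g(y)\succeq 0$ (convexity of $g$) yields $\nabla^2 G(y)\succeq\omega(\|y-x\|)\,I$. The only remaining point is $y=x$, where $\|y-x\|$ is non-smooth: here $\phi(x+h)=W(\|h\|)=\tfrac12\omega(0)\|h\|^2+o(\|h\|^2)$, so $\nabla^2 G(x)=\nabla^2 g(x)+\omega(0)\,I\succeq\omega(0)\,I=\omega(\|x-x\|)\,I$ (alternatively this case follows by continuity of $\nabla^2 G$ as $y\to x$), matching the claim.

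I do not anticipate a serious obstacle. The only technical care needed is (i) justifying existence and uniqueness of the $\argmin$ — coercivity of $G$ via the superlinear growth of $W$ against the at-most-linear growth of the convex $g$, together with strict convexity away from the center — and (ii) handling the non-differentiability of $\|y-x\|$ at $y=x$, which is harmless because $W$ vanishes to second order at the origin. Everything else is a one-line chain-rule calculation followed by invoking first-order optimality.
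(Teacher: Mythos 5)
Your proof is correct and follows essentially the same route as the paper's: read off the $(0,0)$ proximal condition from the first-order optimality condition $\nabla G(y^*)=\nabla g(y^*)+\omega(\|y^*-x\|)(y^*-x)=0$, then compute $\nabla^2 G(y)=\nabla^2 g(y)+\omega(\|y-x\|)I+\omega'(\|y-x\|)\frac{(y-x)(y-x)^\top}{\|y-x\|}\succeq\omega(\|y-x\|)I$ using convexity of $g$ and monotonicity of $\omega$. The only difference is that you additionally verify well-definedness of the argmin (coercivity and strict convexity) and treat the non-smooth point $y=x$ explicitly, details the paper leaves implicit; these additions are fine.
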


\begin{proof}
From the optimality condition we have for $y^*=\mathcal{O}(x)$ 
\[
\nabla G(y^*)=\nabla g(y^*)+\omega(\|y^*-x\|_{2})\cdot(y^*-x)=0.
\]
which means $\mathcal{O}$ is a $(0,0)$ proximal oracle according to the definition.
Note that
\begin{align*}
\nabla^{2}G(y) & =\nabla^{2}g(y)+\omega(\|y-x\|_{2})I+\omega'(\|y-x\|_{2})\cdot\frac{(y-x)(y-x)^{\top}}{\|y-x\|_{2}}\\
 & \succeq\omega(\|y-x\|_{2})I
\end{align*}
where we used that $g$ is convex and $\omega$ is increasing.
Further, this
shows that $y^*$ is the unique minimizer of $G$.
\end{proof}

In Section \ref{subsec:exact_inexact_proximal}, we show that $\zeta$
is close to some continuous function $\zeta^{*}$ (except for some
special cases which we can handle separately).

\begin{figure}
\begin{algorithm2e}[H]\label{alg:binary}

\caption{Line Search Algorithm}

\SetKwRepeat{Do}{do}{while}

\SetAlgoLined

\textbf{Input}: $x^{(1)},x^{(2)}\in\Rd$ and $\frac{1}{2\omega(2\mu R)}\leq A\leq\frac{R^2}{\epsilon}$.\\
\textbf{Input:} $\epsilon' \defeq \frac{\epsilon}{9c((1+\alpha)c+1)} \in (0,1]$.\\
\textbf{Input}: an $(\alpha,\delta)$ proximal oracle $\oracleprox$
for a convex twice-differentiable function $g$.

\textbf{Assumption:} $\|x^{(1)}-x^*\|_{2}\leq \mu R$, $\|x^{(2)}-x^*\|_{2}\leq \mu R$, $\|x^{*}\|_{2}\leq R$ for some minimizer $x^{*}$ of $g$, which implies $\|x^{(1)}\|_2\leq 2\mu R$ and $\|x^{(2)}\|_2\leq 2\mu R$.

\textbf{Assumption: }$\delta \leq \min\{\frac{\epsilon'}{\mu\cdot R},8\mu R\cdot\omega(8\mu R)\}$. $0<\omega'(s)\leq\gamma\frac{\omega(s)}{s}$
for all $s>0$. $\frac{1-\sigma}{1-\alpha} = \frac{1}{2}$. $64(\alpha+\frac{1}{c})\gamma^2\leq1$ for some $c \geq 1$. 

Define $\widetilde{x}_{\theta}=(1-\theta)x^{(1)}+\theta x^{(2)}$, $y_{\theta}=\oracleprox(\widetilde{x}_{\theta})$
and $\zeta(\theta)$ according to \eqref{eq:zeta}.

Let $\tau=\min\Big\{\frac{1}{4},\frac{1}{2}\sqrt{\frac{1}{4}\frac{1}{A\cdot\omega(8c\mu R)}},\frac{A\delta}{64\mu R},\frac{c\delta}{360\mu\gamma R\cdot\omega(8c\mu R)},\frac{1}{200\left(1+A\cdot\omega(8c\mu R)+\frac{4\mu R}{A\delta}+\frac{\mu R}{\delta}\cdot\omega(8c\mu R)\right)}\Big\}$.

Set $\ell=0$, $u=1$.

\While{$u\geq\ell+\tau$}{

$m=\frac{\ell+u}{2}$.

\uIf{$\zeta(m)\geq\frac{3}{4}$}{

$\ell\leftarrow m$.

}\Else{

$u\leftarrow m$.

}

}

\uIf{$\omega(\|y_{\ell}-\widetilde{x}_{\ell}\|_{2})\cdot\|y_{\ell}-\widetilde{x}_{\ell}\|_{2}\leq c\cdot\delta$}{

\textbf{Return} $y_{\ell}$ as an approximate minimizer.

}\uElseIf{$\omega(\|y_{u}-\widetilde{x}_{u}\|_{2})\cdot\|y_{u}-\widetilde{x}_{u}\|_{2}\leq c\cdot\delta$}{

\textbf{Return} $y_{u}$ as an approximate minimizer.

}\Else{

\textbf{Return} $y_{\ell}$ as an approximate solution for the line
search.

}

\end{algorithm2e}
\end{figure}

\subsection{Line search regime: relation between exact and inexact proximal map}
\label{subsec:exact_inexact_proximal}

The goal of this section is to relate  
\[\zeta(\theta) \defeq \frac{(1-\theta)^{2}A}{\theta}\omega(\|y_{\theta}-\widetilde{x}_{\theta}\|_{2})\]
where $y_\theta = \oracleprox(\widetilde{x}_\theta)$ is output of an $(\alpha,\delta)$ proximal oracle to 
\[
\zeta^{*}(\theta) \defeq \frac{(1-\theta)^{2}A}{\theta}\omega(\|y_{\theta}^{*}-\widetilde{x}_{\theta}\|_{2})
\]
where $y_{\theta}^{*}=\argmin_{y}\, G_{\theta}(y)$ with
\begin{equation}
G_{\theta}(y) 
\defeq	
g(y)+W(\|y-\widetilde{x}_{\theta}\|_{2}) \, ,\label{eq:F_binary_search}
\end{equation}
the exact proximal map. In particular, we will show in Lemma \ref{lem:zeta_approximation} that
$\zeta(\theta)$ is an constant approximation of $\zeta^{*}(\theta)$.
Therefore, one can study the binary search of $\zeta$ via $\zeta^{*}$.

First we give a lemma that relates $\|y_{\theta}-\widetilde{x}_{\theta}\|_{2}$ and $\|y_{\theta}^{*}-\widetilde{x}_{\theta}\|_{2}$.
\begin{lem}
\label{lem:z_approximation} If $8(\alpha+\frac{1}{c})\gamma\leq1$ and
$\omega(\|y_{\theta}-\widetilde{x}_{\theta}\|_{2})\cdot\|y_{\theta}-\widetilde{x}_{\theta}\|_{2}\geq c\cdot \delta$,
then
\[
\left(1-8\left(\alpha+\frac{1}{c}\right)\gamma\right)\|y_{\theta}-\widetilde{x}_{\theta}\|_{2}\leq\|y_{\theta}^{*}-\widetilde{x}_{\theta}\|_{2}
\leq \left(1+8\left(\alpha+\frac{1}{c}\right)\gamma\right) \|y_{\theta}-\widetilde{x}_{\theta}\|_{2}.
\]
\end{lem}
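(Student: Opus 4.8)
The plan is to sidestep any local Hessian or Taylor bookkeeping and instead compare \emph{both} the noisy prox point $y_{\theta}=\oracleprox(\widetilde{x}_{\theta})$ and the exact prox point $y_{\theta}^{*}$ against one common auxiliary family: the ordinary \emph{quadratically} regularized proximal maps $z(\mu)\defeq\argmin_{w}\{g(w)+\tfrac{\mu}{2}\|w-\widetilde{x}_{\theta}\|^{2}\}$, each of which is the minimizer of a globally $\mu$-strongly convex function. Fix $\theta$, write $x=\widetilde{x}_{\theta}$, $s=\|y_{\theta}-x\|$, $s^{*}=\|y_{\theta}^{*}-x\|$, and $\kappa=\alpha+1/c$ (so $\kappa\le 1/(8\gamma)\le 1/8<1$ under the hypothesis $8(\alpha+1/c)\gamma\le1$). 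I would first dispose of the degenerate case $s=0$: there the radius hypothesis forces $\delta=0$, hence $\nabla g(x)=0$, hence $y_{\theta}^{*}=x$ and both sides of the claim vanish. So from now on assume $s>0$.

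The first substantive step is to show the noisy point lies within $\kappa s$ of $z(\omega(s))$. The oracle inequality \eqref{eq:oracle_prox_condition} gives $\|\nabla g(y_{\theta})+\omega(s)(y_{\theta}-x)\|\le\alpha\,\omega(s)\,s+\delta$, and the hypothesis $\omega(s)\,s\ge c\,\delta$ upgrades this to $\le\kappa\,\omega(s)\,s$. The left-hand side is exactly the gradient of $w\mapsto g(w)+\tfrac{\omega(s)}{2}\|w-x\|^{2}$ evaluated at $y_{\theta}$, and that function is $\omega(s)$-strongly convex with minimizer $z(\omega(s))$, so $\|y_{\theta}-z(\omega(s))\|\le\kappa\,\omega(s)\,s/\omega(s)=\kappa s$. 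Writing $\phi(r)\defeq\|z(\omega(r))-x\|$, the triangle inequality then yields
\[
(1-\kappa)\,s\ \le\ \phi(s)\ \le\ (1+\kappa)\,s .
\]

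The second step uses two structural facts. (i) By Lemma~\ref{lem:exact_prox} the exact point satisfies $\nabla g(y_{\theta}^{*})+\omega(s^{*})(y_{\theta}^{*}-x)=0$, so $y_{\theta}^{*}=z(\omega(s^{*}))$ and therefore $s^{*}=\phi(s^{*})$ is a fixed point of $\phi$. (ii) The function $\phi$ is non-increasing: $\mu\mapsto\|z(\mu)-x\|$ is non-increasing by the standard two-optimality-conditions argument (add the two inequalities defining $z(\mu_{1})$ and $z(\mu_{2})$, rearrange to get $(\mu_{1}-\mu_{2})(\|z(\mu_{1})-x\|^{2}-\|z(\mu_{2})-x\|^{2})\le0$), and $\omega$ is increasing. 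Given these, a short case split finishes the proof: if $s\ge s^{*}$ then monotonicity gives $\phi(s)\le\phi(s^{*})=s^{*}$, which combined with $\phi(s)\ge(1-\kappa)s$ forces $s^{*}\ge(1-\kappa)s$; if $s<s^{*}$ then $\phi(s)\ge\phi(s^{*})=s^{*}$, which with $\phi(s)\le(1+\kappa)s$ forces $s^{*}\le(1+\kappa)s$; either way $(1-\kappa)s\le s^{*}\le(1+\kappa)s$. Since $\gamma\ge1$ implies $\kappa\le 8\gamma\kappa$, this is in fact stronger than — and hence implies — the stated bound with $8(\alpha+1/c)\gamma$.

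I do not expect a genuinely hard step here; once one commits to routing through the $z(\mu)$ family the argument is elementary. The one place to be careful is the invocation of monotonicity of the proximal radius — ensuring $z(\mu)$ is well defined and unique for $\mu>0$ (immediate from strong convexity), that monotonicity in $\mu$ transfers to $\phi$ via $\omega$ increasing, and that the degenerate boundary behavior when $\omega(0)=0$ is covered (it is subsumed by the $s=0$ discussion, since $s^{*}=0$ together with $\phi(s)\ge(1-\kappa)s>0$ is contradictory whenever $s>0$). I would also flag as a remark that this route delivers the sharper constant $\alpha+1/c$ rather than $8(\alpha+1/c)\gamma$, which is all the downstream line-search analysis actually consumes.
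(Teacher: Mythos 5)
Your proof is correct, but it takes a genuinely different route from the paper's. The paper controls $\|y_{\theta}-y_{\theta}^{*}\|$ directly: it writes $\nabla G_{\theta}(y_{\theta}^{*})-\nabla G_{\theta}(y_{\theta})$ as an integral of $\nabla^{2}G_{\theta}$ along the segment joining the two points, lower bounds the Hessian by $\omega(\|y_{\theta}^{(t)}-\widetilde{x}_{\theta}\|_{2})\cdot I$ (Lemma~\ref{lem:exact_prox}), and handles the dependence of this bound on the moving point via a case split on whether $\|y_{\theta}-y_{\theta}^{*}\|_{2}\geq 4\|y_{\theta}-\widetilde{x}_{\theta}\|_{2}$ together with the growth estimate $\omega(\eta\beta)\leq\eta^{\gamma}\omega(\beta)$; this is where the factor $4(\gamma+1)\leq 8\gamma$ enters. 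You instead freeze the radius and compare both points to the quadratically regularized proximal points $z(\mu)$: the oracle condition plus $\omega(s)s\geq c\delta$ gives $\|y_{\theta}-z(\omega(s))\|\leq(\alpha+\tfrac1c)s$ by $\omega(s)$-strong convexity of $g(\cdot)+\tfrac{\omega(s)}{2}\|\cdot-\widetilde{x}_{\theta}\|^{2}$, the exact point is identified as the fixed point $y_{\theta}^{*}=z(\omega(s^{*}))$ via the first-order condition of Lemma~\ref{lem:exact_prox}, and the sandwich follows from the standard monotonicity of the prox radius in $\mu$ combined with monotonicity of $\omega$. This avoids the Hessian-integration and Gr\"onwall-type machinery entirely, and it yields the sharper multiplicative error $\alpha+\tfrac1c$ in place of $8(\alpha+\tfrac1c)\gamma$, which implies the stated bound since $\gamma\geq1$ (and hence everything downstream, e.g.\ Lemma~\ref{lem:zeta_approximation}, goes through unchanged). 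The one place your write-up is terse is the boundary case $s>0$, $s^{*}=0$: the fixed-point identity needs $\omega(s^{*})>0$, and when $\omega(0)=0$ you must additionally note that $y_{\theta}^{*}=\widetilde{x}_{\theta}$ forces $\nabla g(\widetilde{x}_{\theta})=0$, hence $z(\mu)=\widetilde{x}_{\theta}$ for all $\mu>0$ and $\phi(s)=0$, contradicting $\phi(s)\geq(1-\kappa)s>0$; you flag exactly this and the completion is a one-line addition, so I count it as a presentational gap rather than a mathematical one.
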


\begin{proof}
We define $y_{\theta}^{(t)}=(1-t)y_{\theta}+ty_{\theta}^{*}$. Then,
we have that 
\begin{equation}
\nabla G_{\theta}(y_{\theta}^{*})-\nabla G_{\theta}(y_{\theta})=\int_{0}^{1}\nabla^{2}G_{\theta}(y_{\theta}^{(t)})\cdot(y_{\theta}^*-y_{\theta})dt.\label{eq:zeta_apx_1}
\end{equation}
Lemma \ref{lem:exact_prox} shows that
\begin{equation}
\nabla^{2}G_{\theta}(y_{\theta}^{(t)})\succeq\omega(\|y_{\theta}^{(t)}-\widetilde{x}_{\theta}\|_{2})\cdot I.\label{eq:zeta_apx_2}
\end{equation}

To lower bound $\|y_{\theta}^{(t)}-\widetilde{x}_{\theta}\|_{2}$,
we split the proof into two cases:

Case 1: $\|y_{\theta}-y_{\theta}^{*}\|_{2}\geq4\|y_{\theta}-\widetilde{x}_{\theta}\|_{2}$.
Since $y_{\theta}^{(t)}=(1-t)y_{\theta}+ty_{\theta}^{*}$, then for
$t\geq\frac{1}{2}$,
\begin{align*}
\|y_{\theta}^{(t)}-\widetilde{x}_{\theta}\|_{2}  =\|y_{\theta}-\widetilde{x}_{\theta}+t(y_{\theta}^{*}-y_{\theta})\|_{2}
 \geq t\|y_{\theta}^{*}-y_{\theta}\|_{2}-\|y_{\theta}-\widetilde{x}_{\theta}\|_{2}
  \geq\|y_{\theta}-\widetilde{x}_{\theta}\|_{2}.
\end{align*}
Since $\omega$ is increasing  $\omega(\|y_{\theta}^{(t)}-\widetilde{x}_{\theta}\|_{2})\geq\omega(\|y_{\theta}-\widetilde{x}_{\theta}\|_{2})$.
Combining with \eqref{eq:zeta_apx_1} and \eqref{eq:zeta_apx_2} this yields 
\begin{align*}
\|\nabla G_{\theta}(y_{\theta})-\nabla G_{\theta}(y_{\theta}^{*})\|_{2}  \geq\int_{1/2}^{1}\omega(\|y_{\theta}-\widetilde{x}_{\theta}\|_{2})dt\cdot\|y_{\theta}-y_{\theta}^{*}\|_{2}
  =\frac{1}{2}\omega(\|y_{\theta}-\widetilde{x}_{\theta}\|_{2})\cdot\|y_{\theta}-y_{\theta}^{*}\|_{2}.
\end{align*}

Case 2: $\|y_{\theta}-y_{\theta}^{*}\|_{2}\leq4\|y_{\theta}-\widetilde{x}_{\theta}\|_{2}$.
Since $y_{\theta}^{(t)}=(1-t)y_{\theta}+ty_{\theta}^{*}$, we have
\[
\|y_{\theta}^{(t)}-\widetilde{x}_{\theta}\|_{2}\geq\|y_{\theta}-\widetilde{x}_{\theta}\|_{2}-t\|y_{\theta}^{*}-y_{\theta}\|_{2}\geq(1-4t)\|y_{\theta}-\widetilde{x}_{\theta}\|_{2}.
\]
Using this and $\omega(\eta\cdot\beta)\leq\eta^{\gamma}\omega(\beta)$
(which is implied by $\omega'(s)\leq\gamma\frac{\omega(s)}{s}$ from Gr\"onwall's inequality),
for $0\leq t\leq\frac{1}{4}$, we have that
\[
\omega(\|y_{\theta}^{(t)}-\widetilde{x}_{\theta}\|_{2})\geq(1-4t)^{\gamma}\omega(\|y_{\theta}-\widetilde{x}_{\theta}\|_{2}).
\]
Together with \eqref{eq:zeta_apx_1} and \eqref{eq:zeta_apx_2}, we
have that
\begin{align}
\|\nabla G_{\theta}(y_{\theta})-\nabla G_{\theta}(y_{\theta}^{*})\|_{2} & \geq\int_{0}^{1/4}(1-4t)^{\gamma}dt\cdot\omega(\|y_{\theta}-\widetilde{x}_{\theta}\|_{2})\cdot\|y_{\theta}-y_{\theta}^{*}\|_{2}\nonumber \\
 & =\frac{1}{4(\gamma+1)}\cdot\omega(\|y_{\theta}-\widetilde{x}_{\theta}\|_{2})\cdot\|y_{\theta}-y_{\theta}^{*}\|_{2}.\label{eq:zeta_apx_3}
\end{align}
In both cases, we have \eqref{eq:zeta_apx_3} as $\gamma \geq 1$. 

On the other hand, the assumption on $y_{\theta}$ shows that 
\begin{align}
\|\nabla G_{\theta}(y_{\theta})-\nabla G_{\theta}(y_{\theta}^{*})\|_{2}=\|\nabla G_{\theta}(y_{\theta})\|_{2} & \leq\alpha\cdot\omega(\|y_{\theta}-\widetilde{x}_{\theta}\|_{2})\cdot\|y_{\theta}-\widetilde{x}_{\theta}\|_{2}+\delta\nonumber \\
 & \leq \left(\alpha+\frac{1}{c}\right)\cdot\omega(\|y_{\theta}-\widetilde{x}_{\theta}\|_{2})\cdot\|y_{\theta}-\widetilde{x}_{\theta}\|_{2}\label{eq:zeta_eqv_2}
\end{align}
where we used the assumption on $\omega(\|y_{\theta}-\widetilde{x}_{\theta}\|_{2})\cdot\|y_{\theta}-\widetilde{x}_{\theta}\|_{2}$.

Combining \eqref{eq:zeta_apx_3} and \eqref{eq:zeta_eqv_2}, we have
that
\[
\|y_{\theta}-y_{\theta}^{*}\|_{2}\leq4 \left(\alpha+\frac{1}{c} \right)(\gamma+1)\cdot\|y_{\theta}-\widetilde{x}_{\theta}\|_{2}
\leq
8\left(\alpha+\frac{1}{c}\right)\gamma\cdot\|y_{\theta}-\widetilde{x}_{\theta}\|_{2}
\]
where we used that $\gamma\geq1$. The claim now follows from triangle inequality.
\end{proof}

Since $\zeta = \lambda_\theta \cdot \omega(\|y_{\theta}-\widetilde{x}_{\theta}\|_{2})$ and $\lambda_\theta$ is just a closed form function $\lambda$ 
we have the following main result of this section:
\begin{lem}
\label{lem:zeta_approximation}If $64(\alpha+\frac{1}{c})\gamma^2\leq1$ and $\omega(\|y_{\theta}-\widetilde{x}_{\theta}\|_{2})\cdot\|y_{\theta}-\widetilde{x}_{\theta}\|_{2}\geq c\cdot \delta$,
then $\frac{7}{8}\zeta(\theta)\leq\zeta^{*}(\theta)\leq\frac{5}{4}\zeta(\theta)$.
\end{lem}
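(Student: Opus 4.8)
The plan is to exploit the fact that $\zeta(\theta)$ and $\zeta^{*}(\theta)$ differ only in the argument of $\omega$: by definition both carry the same closed-form factor $\lambda_{\theta}=\tfrac{(1-\theta)^{2}A}{\theta}$, namely $\zeta(\theta)=\lambda_{\theta}\,\omega(\|y_{\theta}-\widetilde{x}_{\theta}\|_{2})$ and $\zeta^{*}(\theta)=\lambda_{\theta}\,\omega(\|y^{*}_{\theta}-\widetilde{x}_{\theta}\|_{2})$, so that
\[
\frac{\zeta^{*}(\theta)}{\zeta(\theta)}=\frac{\omega(\|y^{*}_{\theta}-\widetilde{x}_{\theta}\|_{2})}{\omega(\|y_{\theta}-\widetilde{x}_{\theta}\|_{2})}.
\]
Thus it suffices to sandwich this last ratio between $\tfrac78$ and $\tfrac54$, and the whole statement reduces to controlling the multiplicative distortion between $\|y^{*}_{\theta}-\widetilde{x}_{\theta}\|_{2}$ and $\|y_{\theta}-\widetilde{x}_{\theta}\|_{2}$, which is precisely what Lemma~\ref{lem:z_approximation} provides.

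First I would check that the hypotheses of Lemma~\ref{lem:z_approximation} are met. Writing $\beta\defeq 8(\alpha+\tfrac1c)\gamma$, the assumption $64(\alpha+\tfrac1c)\gamma^{2}\le1$ together with $\gamma\ge1$ gives $\beta = 8(\alpha+\tfrac1c)\gamma^{2}/\gamma\le \tfrac1{8\gamma}\le\tfrac18$; in particular $8(\alpha+\tfrac1c)\gamma\le1$, and moreover $\beta\gamma\le\tfrac18$. Since we are also assuming $\omega(\|y_{\theta}-\widetilde{x}_{\theta}\|_{2})\cdot\|y_{\theta}-\widetilde{x}_{\theta}\|_{2}\ge c\delta$, Lemma~\ref{lem:z_approximation} applies and yields
\[
(1-\beta)\,\|y_{\theta}-\widetilde{x}_{\theta}\|_{2}\ \le\ \|y^{*}_{\theta}-\widetilde{x}_{\theta}\|_{2}\ \le\ (1+\beta)\,\|y_{\theta}-\widetilde{x}_{\theta}\|_{2}.
\]
Next I would feed these into the scaling estimate for $\omega$: from $\omega'(s)\le\gamma\,\omega(s)/s$, Gr\"onwall's inequality gives $\omega(\eta s)\le\eta^{\gamma}\omega(s)$ for $\eta\ge1$ and $\omega(\eta s)\ge\eta^{\gamma}\omega(s)$ for $\eta\le1$, so applying these with $\eta=1\pm\beta$ and $s=\|y_{\theta}-\widetilde{x}_{\theta}\|_{2}$ gives $(1-\beta)^{\gamma}\le \zeta^{*}(\theta)/\zeta(\theta)\le(1+\beta)^{\gamma}$. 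Finally I would close the numerics: $(1+\beta)^{\gamma}\le e^{\beta\gamma}\le e^{1/8}<\tfrac54$, and Bernoulli's inequality (valid for exponent $\gamma\ge1$) gives $(1-\beta)^{\gamma}\ge 1-\gamma\beta\ge 1-\tfrac18=\tfrac78$, so $\tfrac78\zeta(\theta)\le\zeta^{*}(\theta)\le\tfrac54\zeta(\theta)$.

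In this argument there is essentially no real obstacle: the substantive estimate, relating the inexact proximal point $y_{\theta}$ to the exact one $y^{*}_{\theta}$, has already been carried out in Lemma~\ref{lem:z_approximation} via the strong convexity of $G_{\theta}$ along the segment $[y_{\theta},y^{*}_{\theta}]$. The only thing that requires attention here is that the constants actually line up — that the distortion $\beta$, hence $(1+\beta)^{\gamma}$ and $(1-\beta)^{\gamma}$, stays inside $[\tfrac78,\tfrac54]$ — and this is exactly what the (otherwise unremarkable) constant $64$ in the hypothesis $64(\alpha+\tfrac1c)\gamma^{2}\le1$ is chosen to guarantee.
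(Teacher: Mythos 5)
Your proposal is correct and follows essentially the same route as the paper: invoke Lemma~\ref{lem:z_approximation} to sandwich $\|y_{\theta}^{*}-\widetilde{x}_{\theta}\|_{2}$ between $(1\pm 8(\alpha+\tfrac1c)\gamma)\|y_{\theta}-\widetilde{x}_{\theta}\|_{2}$, then use monotonicity of $\omega$ together with $\omega(\eta s)\leq\eta^{\gamma}\omega(s)$ and the assumption $64(\alpha+\tfrac1c)\gamma^{2}\leq1$ to control the ratio of the $\omega$-values. Your explicit numerical closing via $e^{1/8}<\tfrac54$ and Bernoulli's inequality just fills in the constant-checking step the paper leaves implicit.
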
 

\begin{proof}
Lemma \ref{lem:z_approximation} shows that 
\[
\left( 1-8 \left(\alpha+\frac{1}{c} \right)\gamma \right)\|y_{\theta}-\widetilde{x}_{\theta}\|_{2}\leq\|y_{\theta}^{*}-\widetilde{x}_{\theta}\|_{2}\leq
\left(1+8\left(\alpha+\frac{1}{c}\right)\gamma \right)\|y_{\theta}-\widetilde{x}_{\theta}\|_{2}.
\]
Using $\omega$ is non-decreasing and $\omega(\eta\cdot\beta)\leq\eta^{\gamma}\omega(\beta)$,
we have
\[
\left(1-8\left(\alpha+\frac{1}{c} \right)\gamma\right)^{\gamma}\omega(\|y_{\theta}-\widetilde{x}_{\theta}\|_{2})\leq\omega(\|y_{\theta}^*-\widetilde{x}_{\theta}\|_{2})
\leq
\left(1+8\left(\alpha+\frac{1}{c} \right)\gamma\right)^{\gamma}\omega(\|y_{\theta}-\widetilde{x}_{\theta}\|_{2}).
\]
The result now follows from the assumption $64(\alpha+\frac{1}{c})\gamma^2\leq1$. 
\end{proof}

\subsection{Approximate minimization regime\texorpdfstring{: when $y_{\theta}$ is close to $\widetilde{x}_{\theta}$}{}}

In Section \ref{subsec:exact_inexact_proximal}, we show that if $\|y_{\theta}-\widetilde{x}_{\theta}\|_{2}$
is large, $\zeta$ approximates $\zeta^{*}$ up to constant factor. In this section, we
handle the other case. We show that if $\|y_{\theta}-\widetilde{x}_{\theta}\|_{2}$
is small, then we can find a $y$ with small function value $g(y)$.
First, we show that $\|y_{\theta}-\widetilde{x}_{\theta}\|_{2}$ cannot
be too large.
\begin{lem}
\label{lem:upperbound_z} If $16(\alpha+\frac{1}{c})\gamma\leq1$ then $
\|y_{\theta}-\widetilde{x}_{\theta}\|_{2}\leq 8c\mu R
$
for all $\theta\in[0,1]$.
\end{lem}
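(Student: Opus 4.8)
The plan is to split on the size of $d \defeq \|y_{\theta}-\widetilde{x}_{\theta}\|_{2}$ relative to $c\cdot\delta$, i.e.\ on whether $\omega(d)\cdot d \geq c\cdot\delta$ or $\omega(d)\cdot d < c\cdot\delta$, since the two regimes call for different tools. In both cases I would use the elementary bound $\|\widetilde{x}_{\theta}-x^{*}\|_{2}\leq(1-\theta)\|x^{(1)}-x^{*}\|_{2}+\theta\|x^{(2)}-x^{*}\|_{2}\leq\mu R$ coming from the diameter assumptions on the inputs, together with the hypothesis $16(\alpha+\tfrac{1}{c})\gamma\leq1$, which in particular gives $8(\alpha+\tfrac{1}{c})\gamma\leq\tfrac{1}{2}$.

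\emph{Regime $\omega(d)\cdot d\geq c\cdot\delta$.} Here Lemma~\ref{lem:z_approximation} applies (its hypothesis $8(\alpha+\tfrac{1}{c})\gamma\leq1$ holds) and gives $\bigl(1-8(\alpha+\tfrac{1}{c})\gamma\bigr)d\leq\|y_{\theta}^{*}-\widetilde{x}_{\theta}\|_{2}$, where $y_{\theta}^{*}=\argmin_{y}\,g(y)+W(\|y-\widetilde{x}_{\theta}\|_{2})$ is the exact proximal point of Lemma~\ref{lem:exact_prox}. To bound the right-hand side I would show that an exact proximal step cannot move away from $\widetilde{x}_{\theta}$ by more than $\|\widetilde{x}_{\theta}-x^{*}\|_{2}$: the optimality condition of Lemma~\ref{lem:exact_prox} reads $\nabla g(y_{\theta}^{*})=-\omega(\|y_{\theta}^{*}-\widetilde{x}_{\theta}\|_{2})(y_{\theta}^{*}-\widetilde{x}_{\theta})$, so pairing with $y_{\theta}^{*}-x^{*}$ and using convexity, $0\leq g(y_{\theta}^{*})-g(x^{*})\leq\langle\nabla g(y_{\theta}^{*}),y_{\theta}^{*}-x^{*}\rangle=-\omega(\cdot)\langle y_{\theta}^{*}-\widetilde{x}_{\theta},\,y_{\theta}^{*}-x^{*}\rangle$; hence $\langle y_{\theta}^{*}-\widetilde{x}_{\theta},\,y_{\theta}^{*}-x^{*}\rangle\leq0$, and expanding $y_{\theta}^{*}-x^{*}=(y_{\theta}^{*}-\widetilde{x}_{\theta})+(\widetilde{x}_{\theta}-x^{*})$ together with Cauchy--Schwarz yields $\|y_{\theta}^{*}-\widetilde{x}_{\theta}\|_{2}\leq\|\widetilde{x}_{\theta}-x^{*}\|_{2}\leq\mu R$. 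Since $1-8(\alpha+\tfrac{1}{c})\gamma\geq\tfrac{1}{2}$, this gives $d\leq2\mu R\leq8c\mu R$, using $c\geq1$.

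\emph{Regime $\omega(d)\cdot d<c\cdot\delta$.} Here I only need monotonicity and the standing assumption $\delta\leq8\mu R\cdot\omega(8\mu R)$: since $\omega$ is increasing and $c\geq1$, $\omega(d)\cdot d < c\cdot\delta\leq 8c\mu R\cdot\omega(8\mu R)\leq (8c\mu R)\cdot\omega(8c\mu R)$. The map $s\mapsto s\cdot\omega(s)$ is strictly increasing on $(0,\infty)$ (product of the positive increasing function $\omega$ and the increasing identity), so this forces $d<8c\mu R$.

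Combining the two regimes gives $d\leq8c\mu R$ for all $\theta\in[0,1]$, as claimed. I do not expect a real obstacle; the only mildly delicate ingredient is the ``a proximal step stays in the $\mu R$-ball about $x^{*}$'' estimate in the first regime, but that is a standard consequence of first-order optimality and convexity of $g$, and the constant bookkeeping — $16(\alpha+\tfrac{1}{c})\gamma\leq1$ is exactly enough both to invoke Lemma~\ref{lem:z_approximation} and to keep $1-8(\alpha+\tfrac{1}{c})\gamma\geq\tfrac{1}{2}$ — is routine.
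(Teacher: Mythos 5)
Your proof is correct and takes essentially the same route as the paper: the same case split on whether $\omega(\|y_{\theta}-\widetilde{x}_{\theta}\|_{2})\cdot\|y_{\theta}-\widetilde{x}_{\theta}\|_{2}\geq c\cdot\delta$, with Lemma~\ref{lem:z_approximation} handling the first case and the standing assumption $\delta\leq 8\mu R\cdot\omega(8\mu R)$ handling the second. The only cosmetic difference is how you show $\|y_{\theta}^{*}-\widetilde{x}_{\theta}\|_{2}\leq\mu R$ — you use the first-order optimality condition of Lemma~\ref{lem:exact_prox} plus convexity of $g$, while the paper compares $G_{\theta}(x^{*})\geq G_{\theta}(y_{\theta}^{*})$ and uses monotonicity of $W$; both are immediate consequences of the same lemma.
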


\begin{proof}
Case 1: $\omega(\|y_{\theta}-\widetilde{x}_{\theta}\|_{2})\cdot\|y_{\theta}-\widetilde{x}_{\theta}\|_{2}\geq c\cdot \delta$.
Using this and $16(\alpha+\frac{1}{c})\gamma\leq1$, Lemma \ref{lem:z_approximation}
shows that
\begin{equation}
\|y_{\theta}-\widetilde{x}_{\theta}\|_{2}\leq 2\|y_{\theta}^{*}-\widetilde{x}_{\theta}\|_{2}.\label{eq:upperbound_z_3}
\end{equation}
To upper bound $\|y_{\theta}^{*}-\widetilde{x}_{\theta}\|_{2}$, we
use the fact that $y_{\theta}^{*}$ is the minimizer of $G_{\theta}$
and get
\[
g(x^{*})+W(\|x^{*}-\widetilde{x}_{\theta}\|_{2})=G_{\theta}(x^{*})\geq G_{\theta}(y_{\theta}^{*})\geq g(x^{*})+W(\|y_{\theta}^{*}-\widetilde{x}_{\theta}\|_{2}).
\]
Since $W$ is increasing, we have $\|y_{\theta}^{*}-\widetilde{x}_{\theta}\|_{2}\leq\|x^{*}-\widetilde{x}_{\theta}\|_{2}\leq  \mu R$
where we used the assumption on $x^{(1)}$ and $x^{(2)}$. 
Putting these into \eqref{eq:upperbound_z_3} yields the result.

Case 2: $\omega(\|y_{\theta}-\widetilde{x}_{\theta}\|_{2})\cdot\|y_{\theta}-\widetilde{x}_{\theta}\|_{2}\leq c\cdot \delta$.
Since $\delta\leq 8\mu R\cdot\omega(8\mu R)$ and $\omega$ is increasing,
we have 
\[
\|y_{\theta}-\widetilde{x}_{\theta}\|_{2}\leq8c\mu R.
\]
Therefore in both cases we have $\|y_{\theta}-\widetilde{x}_{\theta}\|_{2}\leq8c\mu R$ as $c \geq 1$.
\end{proof}

Now, we show that small $\|y_{\theta}-\widetilde{x}_{\theta}\|_{2}$
implies small $g(y_{\theta})$.
\begin{lem}
\label{lem:z_lower} If $\omega(\|y_{\theta}-\widetilde{x}_{\theta}\|_{2})\cdot\|y_{\theta}-\widetilde{x}_{\theta}\|_{2}\leq c\cdot \delta$, we have that
$
g(y_{\theta})\leq g(x^{*})+\epsilon
$.
\end{lem}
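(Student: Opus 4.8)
The plan is to combine the defining inequality of the approximate proximal step oracle with convexity of $g$ and the diameter bound of Lemma~\ref{lem:upperbound_z}. Write $d_\theta \defeq \|y_\theta - \widetilde{x}_\theta\|_2$. Since $y_\theta = \oracleprox(\widetilde{x}_\theta)$, Definition~\ref{def:prox_oracle} together with the triangle inequality gives
\[
\|\nabla g(y_\theta)\|_2 \;\leq\; \omega(d_\theta)\, d_\theta + \alpha\,\omega(d_\theta)\, d_\theta + \delta \;=\; (1+\alpha)\,\omega(d_\theta)\, d_\theta + \delta ~,
\]
and the hypothesis $\omega(d_\theta)\, d_\theta \leq c\,\delta$ turns this into $\|\nabla g(y_\theta)\|_2 \leq [(1+\alpha)c + 1]\,\delta$.

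Next I would bound $\|y_\theta - x^*\|_2$. The standing assumption $64(\alpha + \tfrac1c)\gamma^2 \leq 1$ and $\gamma \geq 1$ imply $16(\alpha + \tfrac1c)\gamma \leq 1$, so Lemma~\ref{lem:upperbound_z} applies and gives $d_\theta \leq 8c\mu R$. Since $\widetilde{x}_\theta$ is a convex combination of $x^{(1)}$ and $x^{(2)}$, each within $\mu R$ of $x^*$, we have $\|\widetilde{x}_\theta - x^*\|_2 \leq \mu R$, whence $\|y_\theta - x^*\|_2 \leq 8c\mu R + \mu R \leq 9c\mu R$ using $c \geq 1$.

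Finally, convexity of $g$ yields $g(y_\theta) - g(x^*) \leq \nabla g(y_\theta)^\top(y_\theta - x^*) \leq \|\nabla g(y_\theta)\|_2\,\|y_\theta - x^*\|_2 \leq 9c\mu R\,[(1+\alpha)c+1]\,\delta$, and the standing assumption $\delta \leq \epsilon'/(\mu R) = \epsilon/\big(9c\mu R\,[(1+\alpha)c+1]\big)$ makes the right-hand side at most $\epsilon$, giving $g(y_\theta) \leq g(x^*) + \epsilon$ as claimed. There is no genuine obstacle in this argument; the only point requiring care is to track the constants so that the product $9c\mu R\,[(1+\alpha)c+1]$ matches exactly the denominator in the assumed bound on $\delta$, and to observe that the hypothesis of Lemma~\ref{lem:upperbound_z} follows from the algorithm's standing assumptions.
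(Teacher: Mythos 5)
Your proposal is correct and follows essentially the same route as the paper's proof: bound $\|\nabla g(y_\theta)\|$ by $[(1+\alpha)c+1]\delta$ via the oracle guarantee and the hypothesis, bound $\|y_\theta - x^*\| \leq 9c\mu R$ via Lemma~\ref{lem:upperbound_z} and the diameter assumption, then conclude by convexity and the standing bound $\delta \leq \epsilon'/(\mu R)$. Your explicit check that the hypothesis of Lemma~\ref{lem:upperbound_z} follows from $64(\alpha+\tfrac1c)\gamma^2\leq 1$ and $\gamma\geq 1$ is a small but welcome addition that the paper leaves implicit.
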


\begin{proof}
By the definition of $y_{\theta}$ and the assumption, we have
\begin{align}
\|\nabla g(y_{\theta})\|_{2} & \leq(1+\alpha)\omega(\|y_{\theta}-\widetilde{x}_{\theta}\|_{2})\cdot\|y_{\theta}-\widetilde{x}_{\theta}\|_{2}+\delta\leq ((1+\alpha)c+1)\delta\label{eq:z_lower_v}
\end{align}
Hence, convexity of $g$ shows that
\begin{align*}
g(y_{\theta})-g(x^{*}) & \leq\left\langle \nabla g(y_{\theta}),y_{\theta}-x^{*}\right\rangle \leq ((1+\alpha)c+1)\delta\|y_{\theta}-x^{*}\|_{2}.
\end{align*}
To bound $\|y_{\theta}-x^{*}\|_{2}$, we note that
\[
\|y_{\theta}-x^{*}\|_{2}\leq \|\widetilde{x}_{\theta}-x^*\|_{2} +\|y_{\theta}-\widetilde{x}_{\theta}\|_{2}\leq\mu R + 8c\mu R\leq9c\mu R
\]
where we used Lemma \ref{lem:upperbound_z} and the assumption on diameter.
Hence, convexity of $g$ shows that
\begin{align*}
g(y_{\theta})-g(x^{*}) & \leq\left\langle \nabla g(y_{\theta}),y_{\theta}-x^{*}\right\rangle \leq ((1+\alpha)c+1)\delta\cdot9c\mu R \leq 9c((1+\alpha)c+1)\epsilon' \, .
\end{align*}
where we used $\delta\leq \frac{\epsilon'}{\mu\cdot R}$.
\end{proof}

\subsection{Bounding Lipschitz constant\texorpdfstring{ of $\zeta^{*}(\theta)$}{}}

To derive the stopping criteria $\tau$ (and therefore the iteration complexity), we need to bound the Lipschitz constant of $\zeta^*(\theta)$. We first give
an upper bound on $\|\frac{d}{d\theta}(y_{\theta}^{*}-\widetilde{x}_{\theta})\|$. 
\begin{lem}
\label{lem:z_speed}We have:
\[
\left\Vert \frac{d}{d\theta}(y_{\theta}^{*}-\widetilde{x}_{\theta})\right\Vert \leq12\mu\gamma R.
\]
\end{lem}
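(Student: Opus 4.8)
The plan is to apply the implicit function theorem to the stationarity condition defining $y_\theta^*$, and to bound the resulting $\theta$-derivative using the second-order lower bound on $G_\theta$ from Lemma~\ref{lem:exact_prox} together with the growth bound $\omega'(s)\le\gamma\,\omega(s)/s$. First, $\widetilde x_\theta=(1-\theta)x^{(1)}+\theta x^{(2)}$ is affine in $\theta$, so $w\defeq\frac{d}{d\theta}\widetilde x_\theta=x^{(2)}-x^{(1)}$ is constant and $\|w\|\le\|x^{(1)}-x^*\|+\|x^{(2)}-x^*\|\le 2\mu R$. Recall that $y_\theta^*=\argmin_y G_\theta(y)$ is characterized by
\[
\nabla G_\theta(y_\theta^*)=\nabla g(y_\theta^*)+\omega\big(\|y_\theta^*-\widetilde x_\theta\|\big)\,(y_\theta^*-\widetilde x_\theta)=0,
\]
and that Lemma~\ref{lem:exact_prox} gives $\nabla^2 G_\theta(y)\succeq\omega(\|y-\widetilde x_\theta\|)\,I$; hence, at any $\theta$ with $s_\theta\defeq\|y_\theta^*-\widetilde x_\theta\|>0$ the Hessian $\nabla^2 G_\theta(y_\theta^*)$ is invertible with $\|(\nabla^2 G_\theta(y_\theta^*))^{-1}\|_{\op}\le 1/\omega(s_\theta)$, so $\theta\mapsto y_\theta^*$ is continuously differentiable there.

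Differentiating the stationarity condition in $\theta$ (with $y_\theta^*$ varying) yields $\nabla^2 G_\theta(y_\theta^*)\,\frac{d}{d\theta}y_\theta^*=-\,\partial_\theta[\nabla G_\theta(y)]\big|_{y=y_\theta^*}$, and a direct computation — writing $\phi\defeq y-\widetilde x_\theta$, so $\partial_\theta\phi=-w$ — gives
\[
\partial_\theta\big[\nabla G_\theta(y)\big]=-\,\omega'(\|\phi\|)\,\frac{\phi^\top w}{\|\phi\|}\,\phi\;-\;\omega(\|\phi\|)\,w .
\]
By Cauchy--Schwarz ($|\phi^\top w|\le\|\phi\|\,\|w\|$) and $\omega'(\|\phi\|)\,\|\phi\|\le\gamma\,\omega(\|\phi\|)$, the norm of the right-hand side is at most $(\gamma+1)\,\omega(\|\phi\|)\,\|w\|$; evaluating at $y=y_\theta^*$ it is $\le(\gamma+1)\,\omega(s_\theta)\,\|w\|$. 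Combining with the Hessian lower bound,
\[
\Big\|\tfrac{d}{d\theta}y_\theta^*\Big\|\le\big\|(\nabla^2 G_\theta(y_\theta^*))^{-1}\big\|_{\op}\,(\gamma+1)\,\omega(s_\theta)\,\|w\|\le(\gamma+1)\,\|w\|,
\]
so $\big\|\frac{d}{d\theta}(y_\theta^*-\widetilde x_\theta)\big\|\le\big\|\frac{d}{d\theta}y_\theta^*\big\|+\|w\|\le(\gamma+2)\,\|w\|\le 2(\gamma+2)\,\mu R\le 12\,\mu\gamma R$, the last step using $\gamma\ge 1$.

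The only point requiring care is the degenerate case $y_\theta^*=\widetilde x_\theta$, where $\omega(s_\theta)=\omega(0)$ may vanish and the implicit function theorem need not apply directly; but $y_\theta^*=\widetilde x_\theta$ forces $\nabla g(\widetilde x_\theta)=0$, i.e. $\widetilde x_\theta$ is a global minimizer of $g$, and such $\theta$ either do not occur in the line-search regime in which this bound is invoked or can be handled by a limiting argument. Apart from this, the main (modest) obstacle is simply carrying out the $\partial_\theta$ computation above carefully and verifying the regularity needed for the implicit function theorem; there is no deeper difficulty.
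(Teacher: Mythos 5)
Your proof is correct and follows essentially the same route as the paper: differentiate the optimality condition $\nabla G_\theta(y_\theta^*)=0$ in $\theta$, compute $\partial_\theta \nabla G_\theta$ explicitly, and bound $\frac{d}{d\theta}y_\theta^*$ using the Hessian lower bound $\nabla^2 G_\theta \succeq \omega(\|y-\widetilde{x}_\theta\|) I$ from Lemma~\ref{lem:exact_prox} together with $\omega'(s)\leq\gamma\,\omega(s)/s$, yielding the same $(\gamma+2)\|x^{(2)}-x^{(1)}\|$ bound (your explicit attention to the degenerate case $y_\theta^*=\widetilde{x}_\theta$ is a small extra care the paper omits).
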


\begin{proof}
To compute the derivative of $y_{\theta}$, we note by optimality
condition that 
\[
\nabla G_{\theta}(y_{\theta}^{*})=0.
\]
Taking derivatives with respect to $\theta$ on both sides gives 
\[
\frac{d}{d\theta}\nabla G_{\theta}(y_{\theta}^{*})+\nabla^{2}G_{\theta}(y_{\theta}^{*})\cdot\frac{d}{d\theta}y_{\theta}^{*}=0.
\]
Hence, we have 
\begin{equation}
\frac{d}{d\theta}y_{\theta}^{*}=-\left(\nabla^{2}G_{\theta}(y_{\theta}^{*})\right)^{-1}\left(
\left( \frac{d}{d\theta}\nabla G_{\theta} \right) 
(y_{\theta}^{*})\right).
\label{eq:dz_dtheta}
\end{equation}
To bound $\frac{d}{d\theta}y_{\theta}^{*}$, we first compute $\frac{d}{d\theta}\nabla G_{\theta}(y)$
and $\nabla^{2}G_{\theta}(y)$. For $\frac{d}{d\theta}\nabla G_{\theta}(y)$,
we have
\begin{align*}
\frac{d}{d\theta}\nabla G_{\theta}(y) & =\frac{d}{d\theta}\left[ \omega(\|y-\widetilde{x}_{\theta}\|_{2})\cdot(y-\widetilde{x}_{\theta})\right]\\
 & =-\omega'(\|y-\widetilde{x}_{\theta}\|_{2})\cdot\frac{(y-\widetilde{x}_{\theta})(y-\widetilde{x}_{\theta})^{\top}}{\|y-\widetilde{x}_{\theta}\|_{2}}(x^{(2)}-x^{(1)})-\omega(\|y-\widetilde{x}_{\theta}\|_{2})\cdot(x^{(2)}-x^{(1)}).
\end{align*}
For $\nabla^{2}G_{\theta}(y)$, Lemma \ref{lem:exact_prox} shows
that 
\[
\nabla^{2}G_{\theta}(y)\succeq\omega(\|y-\widetilde{x}_{\theta}\|_{2})\cdot I.
\]

Now, \eqref{eq:dz_dtheta} shows 
\begin{align*}
\left\|\frac{d}{d\theta}y_{\theta}^{*}\right\| & \leq\left[\frac{\omega'(\|y_{\theta}^{*}-\widetilde{x}_{\theta}\|_{2})}{\omega(\|y_{\theta}-\widetilde{x}_{\theta}\|_{2})}\cdot\left|(y_{\theta}^{*}-\widetilde{x}_{\theta})^{\top}(x^{(2)}-x^{(1)})\right|+\|x^{(2)}-x^{(1)}\|_{2}\right]\\
 & \leq\frac{\omega'(\|y_{\theta}^{*}-\widetilde{x}_{\theta}\|_{2})}{\omega(\|y_{\theta}^{*}-\widetilde{x}_{\theta}\|_{2})}\cdot\|y_{\theta}^{*}-\widetilde{x}_{\theta}\|\cdot\|x^{(2)}-x^{(1)}\|+\|x^{(2)}-x^{(1)}\|_{2}\\
 & \leq(1+\gamma)\cdot\|x^{(2)}-x^{(1)}\|_{2}
\end{align*}
where we used that $\omega'(s)\leq\gamma\cdot\frac{\omega(s)}{s}$
at the end. Hence, we have
\[
\left\|\frac{d}{d\theta}(y_{\theta}^{*}-\widetilde{x}_{\theta})\right\|
\leq
\left\|\frac{d}{d\theta}y^*_{\theta}\right\| + \|x^{(2)}-x^{(1)}\|\leq(2+\gamma)\|x^{(2)}-x^{(1)}\|_{2}.
\]
The result follows from $\gamma\geq1$ and $\|x^{(2)}-x^{(1)}\|_{2}\leq4\mu R$.
\end{proof}

 We now give a bound on the Lipschitz constant $\zeta^{*}(\theta)$.
\begin{lem}
\label{lem:lipschitz}We have
\[
\left|\frac{d}{d\theta}\log\zeta^{*}(\theta)\right|\leq\frac{2}{1-\theta}+\frac{1}{\theta}+\frac{12\mu\gamma^2 R}{\|y_{\theta}^{*}-\widetilde{x}_{\theta}\|_{2}}.
\]
\end{lem}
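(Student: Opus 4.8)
The plan is to take logarithms and split $\log\zeta^{*}(\theta)$ into the two pieces whose behaviour we already control. Recall $\zeta^{*}(\theta)=\lambda_{\theta}\cdot\omega(\|y_{\theta}^{*}-\widetilde{x}_{\theta}\|_{2})$ with $\lambda_{\theta}=\frac{(1-\theta)^{2}A}{\theta}$, so that
\[
\log\zeta^{*}(\theta)=\log\lambda_{\theta}+\log\omega(\|y_{\theta}^{*}-\widetilde{x}_{\theta}\|_{2}),
\]
and I would bound $\left|\frac{d}{d\theta}\log\zeta^{*}(\theta)\right|$ by the sum of the absolute values of the derivatives of the two summands via the triangle inequality.

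For the first summand, since $A$ is a constant we have $\log\lambda_{\theta}=\log A+2\log(1-\theta)-\log\theta$, hence $\frac{d}{d\theta}\log\lambda_{\theta}=-\frac{2}{1-\theta}-\frac{1}{\theta}$ and $\left|\frac{d}{d\theta}\log\lambda_{\theta}\right|\leq\frac{2}{1-\theta}+\frac{1}{\theta}$, accounting for the first two terms of the bound. For the second summand, I would apply the chain rule: $\frac{d}{d\theta}\log\omega(\|y_{\theta}^{*}-\widetilde{x}_{\theta}\|_{2})=\frac{\omega'(\|y_{\theta}^{*}-\widetilde{x}_{\theta}\|_{2})}{\omega(\|y_{\theta}^{*}-\widetilde{x}_{\theta}\|_{2})}\cdot\frac{d}{d\theta}\|y_{\theta}^{*}-\widetilde{x}_{\theta}\|_{2}$. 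Bound the first factor using the standing hypothesis $0<\omega'(s)\leq\gamma\,\omega(s)/s$ (at $s=\|y_{\theta}^{*}-\widetilde{x}_{\theta}\|_{2}$), giving $\frac{\omega'}{\omega}\leq\frac{\gamma}{\|y_{\theta}^{*}-\widetilde{x}_{\theta}\|_{2}}$; bound the magnitude of the second factor by $\bigl|\frac{d}{d\theta}\|y_{\theta}^{*}-\widetilde{x}_{\theta}\|_{2}\bigr|\leq\bigl\|\frac{d}{d\theta}(y_{\theta}^{*}-\widetilde{x}_{\theta})\bigr\|\leq12\mu\gamma R$ using the elementary inequality $|\tfrac{d}{dt}\|v(t)\||\le\|v'(t)\|$ together with Lemma~\ref{lem:z_speed}. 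Multiplying the two gives $\left|\frac{d}{d\theta}\log\omega(\|y_{\theta}^{*}-\widetilde{x}_{\theta}\|_{2})\right|\leq\frac{12\mu\gamma^{2}R}{\|y_{\theta}^{*}-\widetilde{x}_{\theta}\|_{2}}$, which is exactly the third term. Summing the two bounds yields the claim.

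There is essentially no hard step here; the only subtlety is that the expression $\frac{d}{d\theta}(y_{\theta}^{*}-\widetilde{x}_{\theta})$ must exist, i.e.\ $\theta\mapsto y_{\theta}^{*}$ must be differentiable. This is already handled inside the proof of Lemma~\ref{lem:z_speed}: differentiating the optimality condition $\nabla G_{\theta}(y_{\theta}^{*})=0$ and using that $\nabla^{2}G_{\theta}(y_{\theta}^{*})\succeq\omega(\|y_{\theta}^{*}-\widetilde{x}_{\theta}\|_{2})I\succ0$ (Lemma~\ref{lem:exact_prox}) lets the implicit function theorem apply, so I would simply invoke Lemma~\ref{lem:z_speed} as a black box and the rest is a two-line computation.
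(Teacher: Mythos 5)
Your proposal is correct and follows essentially the same route as the paper's proof: split $\log\zeta^{*}(\theta)$ into $\log\lambda_{\theta}$ plus $\log\omega(\|y_{\theta}^{*}-\widetilde{x}_{\theta}\|_{2})$, differentiate, bound $\omega'/\omega$ by $\gamma/\|y_{\theta}^{*}-\widetilde{x}_{\theta}\|_{2}$, and control the derivative of the norm via Lemma~\ref{lem:z_speed} (the paper writes $\frac{d}{d\theta}\|y_{\theta}^{*}-\widetilde{x}_{\theta}\|_{2}$ explicitly as an inner product and applies Cauchy--Schwarz, which is exactly your inequality $|\frac{d}{d\theta}\|v\||\le\|v'\|$). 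The remark about differentiability of $\theta\mapsto y_{\theta}^{*}$ via the strict convexity of $G_{\theta}$ is a reasonable observation that the paper leaves implicit.
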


\begin{proof}
Note that 
\[
\frac{d}{d\theta}\log\zeta^{*}(\theta)=-\frac{2}{1-\theta}-\frac{1}{\theta}+\frac{\omega'(\|y_{\theta}^{*}-\widetilde{x}_{\theta}\|_{2})}{\omega(\|y_{\theta}^{*}-\widetilde{x}_{\theta}\|_{2})}\frac{(y_{\theta}^{*}-\widetilde{x}_{\theta})^{\top}\frac{d}{d\theta}(y_{\theta}^{*}-\widetilde{x}_{\theta})}{\|y_{\theta}^{*}-\widetilde{x}_{\theta}\|_{2}}.
\]
Using $\omega'(s)\leq\gamma\cdot\frac{\omega(s)}{s}$, we have
\begin{align*}
\left|\frac{d}{d\theta}\log\zeta^{*}(\theta)\right| & \leq\frac{2}{1-\theta}+\frac{1}{\theta}+\gamma\frac{\left|(y_{\theta}^{*}-\widetilde{x}_{\theta})^{\top}\frac{d}{d\theta}(y_{\theta}^{*}-\widetilde{x}_{\theta})\right|}{\|y_{\theta}^{*}-\widetilde{x}_{\theta}\|_{2}^{2}}\\
 & \leq\frac{2}{1-\theta}+\frac{1}{\theta}+\gamma\frac{\|\frac{d}{d\theta}(y_{\theta}^{*}-\widetilde{x}_{\theta})\|_{2}}{\|y_{\theta}^{*}-\widetilde{x}_{\theta}\|_{2}}
 \leq\frac{2}{1-\theta}+\frac{1}{\theta}+\frac{12\mu\gamma^2 R}{\|y_{\theta}^{*}-\widetilde{x}_{\theta}\|_{2}}
\end{align*}
from Lemma \ref{lem:z_speed}.
\end{proof}

Since the Lipschitz constant of $\zeta^{*}$ depends on the term $\frac{1}{1-\theta}$
and $\frac{1}{\theta}$, we need to show that $\theta$ cannot be
too close to $0$ and $1$. First, we give an upper bound $\theta$.
\begin{lem}[Upper bound on $\theta$]
\label{lem:upper_bound_theta}Assume that $16(\alpha+\frac{1}{c})\gamma\leq1$.
For any $\theta \in [0,1]$ with $\frac{1}{2}\leq\zeta(\theta)$, we have
\[
\theta
\leq
\max
\left\{
\frac{1}{2}
~,~
1-\sqrt{\frac{1}{4 A\cdot\omega(8c\mu R)}}
\right\}
\]
In particular, we have $u\leq\max\{\frac{3}{4},1-\frac{1}{2}\sqrt{\frac{1}{4A\cdot\omega(8c\mu R)}}\}$ when Algorithm~\ref{alg:binary} terminates.
\end{lem}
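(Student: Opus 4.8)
The plan is to prove the two assertions in turn, the first being the real content and the second a short corollary of it together with the bookkeeping built into the definition of $\tau$.

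For the first claim, the key input is Lemma~\ref{lem:upperbound_z}: under the standing hypothesis $16(\alpha+\tfrac1c)\gamma\le1$ it gives $\|y_\theta-\widetilde{x}_\theta\|_2\le 8c\mu R$ for every $\theta\in[0,1]$. Since $\omega$ is non-decreasing, plugging this into $\zeta(\theta)=\frac{(1-\theta)^2A}{\theta}\,\omega(\|y_\theta-\widetilde{x}_\theta\|_2)$ turns the assumption $\zeta(\theta)\ge\tfrac12$ into $\tfrac12\le\frac{(1-\theta)^2A}{\theta}\,\omega(8c\mu R)$, i.e. $(1-\theta)^2\ge\frac{\theta}{2A\,\omega(8c\mu R)}$. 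I then split on the size of $\theta$: if $\theta\le\tfrac12$ the claimed bound holds trivially; if $\theta>\tfrac12$ then $(1-\theta)^2\ge\frac{1}{4A\,\omega(8c\mu R)}$, and taking the nonnegative square root gives $1-\theta\ge\sqrt{\frac{1}{4A\,\omega(8c\mu R)}}$, i.e. $\theta\le 1-\sqrt{\frac{1}{4A\,\omega(8c\mu R)}}$. Combining the two cases yields $\theta\le\max\{\tfrac12,\,1-\sqrt{\tfrac{1}{4A\omega(8c\mu R)}}\}$. (One should note in passing that $A\ge\frac{1}{2\omega(2\mu R)}>0$ and $\omega>0$, so the square roots make sense.)

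For the ``in particular'' statement, first record the loop invariant of Algorithm~\ref{alg:binary}: at all times $\ell=0$ or $\zeta(\ell)\ge\tfrac34$. This holds initially and is preserved since $\ell$ is only ever reassigned to a midpoint $m$ for which $\zeta(m)\ge\tfrac34$. In either case $\ell\le\max\{\tfrac12,\,1-s\}$ with $s:=\sqrt{\frac{1}{4A\omega(8c\mu R)}}$ — trivially when $\ell=0$, and by the first claim (applied to $\theta=m\in[0,1]$, legitimate since the first claim is stated for all such $\theta$) when $\zeta(\ell)\ge\tfrac34\ge\tfrac12$. At termination $u<\ell+\tau$, and the definition of $\tau$ gives both $\tau\le\tfrac14$ and $\tau\le\tfrac12\sqrt{\tfrac14\cdot\tfrac{1}{A\omega(8c\mu R)}}=\tfrac12 s$, hence $\tau\le\min\{\tfrac14,\tfrac12 s\}$. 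Therefore $u<\max\{\tfrac12,1-s\}+\min\{\tfrac14,\tfrac12 s\}$, and a one-line case check ($s\le\tfrac12$ vs.\ $s>\tfrac12$) shows the right-hand side equals $\max\{\tfrac34,\,1-\tfrac12 s\}$, which is the asserted bound.

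This lemma is not genuinely hard; the one point to get right is that the naive estimate $u<\ell+\tau\le(1-s)+\tfrac12 s=1-\tfrac12 s$ is only useful (and only $\ge\tfrac34$) when $s\le\tfrac12$, so the $\tfrac14$ entry in the definition of $\tau$ must be invoked in the complementary regime: when $s>\tfrac12$ one instead uses $\ell\le\tfrac12$ together with $\tau\le\tfrac14$ to get $u<\tfrac34$. Making the two halves of the case analysis line up with $\max\{\tfrac34,1-\tfrac12 s\}$ is the only step requiring any care.
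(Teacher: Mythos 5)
Your proof is correct and follows essentially the same route as the paper: bound $\|y_\theta-\widetilde{x}_\theta\|$ by $8c\mu R$ via Lemma~\ref{lem:upperbound_z}, use monotonicity of $\omega$ and the case split at $\theta=\tfrac12$ to get the first bound, then combine with the stopping criterion $\tau\leq\min\{\tfrac14,\tfrac12\sqrt{1/(4A\omega(8c\mu R))}\}$ for the bound on $u$. You simply spell out the termination bookkeeping (the loop invariant for $\ell$ and the two-case check against $\max\{\tfrac34,1-\tfrac12 s\}$) that the paper leaves implicit in one sentence.
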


\begin{proof}
Suppose that $\theta\geq\frac{1}{2}$, then we have
\[
\frac{1}{2}
\leq \zeta(\theta)
= \frac{(1-\theta)^{2}A}{\theta} \cdot \omega(\|y_{\theta}-\widetilde{x}_{\theta}\|_{2})
\leq 2(1-\theta)^{2}A \cdot \omega(\|y_{\theta}-\widetilde{x}_{\theta}\|_{2}).
\]
The bound on $\theta$ now follows from Lemma \ref{lem:upperbound_z}.
Since we stop the binary search when $|u-\ell|$ less than
$\frac{1}{2}\min \{\frac{1}{2},\sqrt{ \frac{1}{4A\cdot\omega(8c\mu R)}}\}$,
we have the upper bound on $u$.
\end{proof}
Next, we give a lower bound on $\theta$.

\begin{lem}[Lower bound on $\theta$]
\label{lem:lower_bound_theta}
Assume $16(\alpha+\frac{1}{c})\gamma\leq1$. For any $\theta\in[0,1]$ with $\zeta(\theta)\leq 1$
and $\omega(\|y_{\theta}-\widetilde{x}_{\theta}\|_{2})\cdot\|y_{\theta}-\widetilde{x}_{\theta}\|_{2}\geq c \cdot \delta$,
we have 
\[
\theta
\geq
\min \left\{
\frac{1}{2} , \frac{A\delta}{32\mu R}
\right\} ~.
\]
In particular, we have $\ell \geq \min \{\frac{1}{4} , \frac{A\delta}{64\mu R} \}$
or $\omega(\|y_{\theta}-\widetilde{x}_{\theta}\|_{2})\cdot\|y_{\theta}-\widetilde{x}_{\theta}\|_{2}\leq c\cdot \delta$ when Algorithm~\ref{alg:binary} terminates.
\end{lem}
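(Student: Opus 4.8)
The plan is to prove first the displayed inequality for an arbitrary $\theta$ meeting the two hypotheses, and then to transfer it to $\ell$ (the left endpoint produced by the binary search) using the stopping rule of Algorithm~\ref{alg:binary}. For the main bound: if $\theta \ge \tfrac12$ there is nothing to prove, so assume $\theta < \tfrac12$. First I would unfold $\zeta(\theta) \le 1$: since $\zeta(\theta) = \lambda_\theta\,\omega(\|y_\theta - \widetilde{x}_\theta\|_2)$ with $\lambda_\theta = \frac{(1-\theta)^2 A}{\theta}$, this says $\omega(\|y_\theta - \widetilde{x}_\theta\|_2) \le \frac{\theta}{(1-\theta)^2 A} \le \frac{4\theta}{A}$, where the last step uses $\theta < \tfrac12$. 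Next I would multiply through by $\|y_\theta - \widetilde{x}_\theta\|_2$, bound the left-hand side below by the hypothesis $\omega(\|y_\theta - \widetilde{x}_\theta\|_2)\,\|y_\theta - \widetilde{x}_\theta\|_2 \ge c\delta$, and bound the right-hand side above using the diameter estimate $\|y_\theta - \widetilde{x}_\theta\|_2 \le 8c\mu R$ of Lemma~\ref{lem:upperbound_z} (whose hypothesis $16(\alpha+\tfrac1c)\gamma \le 1$ is exactly the one assumed here). This gives $c\delta \le \frac{4\theta}{A}\cdot 8c\mu R = \frac{32c\mu R}{A}\,\theta$, i.e.\ $\theta \ge \frac{A\delta}{32\mu R}$; combining with the trivial case yields $\theta \ge \min\{\tfrac12, \frac{A\delta}{32\mu R}\}$.

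For the \emph{in particular} claim I would use two invariants of the while loop: since $u$ starts at $1$ (where $\zeta(1)=0$) and is reassigned only to midpoints $m$ with $\zeta(m)<\tfrac34$, at termination $\zeta(u)<\tfrac34\le1$; and the loop exits only once $u-\ell<\tau$. If $\omega(\|y_u - \widetilde{x}_u\|_2)\,\|y_u - \widetilde{x}_u\|_2 \le c\delta$, the second alternative of the claim holds (with $\theta = u$). Otherwise the hypotheses of the main bound are met at $\theta=u$, so $u \ge \min\{\tfrac12, \frac{A\delta}{32\mu R}\}$, and hence $\ell > u-\tau \ge \min\{\tfrac12, \frac{A\delta}{32\mu R}\}-\tau$. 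Since the choice of $\tau$ in Algorithm~\ref{alg:binary} includes both $\tfrac14$ and $\frac{A\delta}{64\mu R}$ among the terms of the minimum, we have $\tau \le \tfrac14$ and $\tau \le \frac{A\delta}{64\mu R}$; plugging these in gives $\ell > \tfrac14$ in the first branch of the $\min$ and $\ell > \frac{A\delta}{64\mu R}$ in the second, so in all cases $\ell \ge \min\{\tfrac14, \frac{A\delta}{64\mu R}\}$.

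Everything here is elementary, so I do not anticipate a genuine obstacle; the only points that require care are (i) recognizing that the hypothesis $\omega(\|y_\theta - \widetilde{x}_\theta\|_2)\,\|y_\theta - \widetilde{x}_\theta\|_2 \ge c\delta$ is exactly what lets $\zeta(\theta)\le1$ be turned into a bound on $\theta$ itself (via the diameter bound of Lemma~\ref{lem:upperbound_z}) rather than merely a bound on $\omega(\|y_\theta - \widetilde{x}_\theta\|_2)$, and (ii) checking that the factor-of-two gap between the bound $\min\{\tfrac12,\frac{A\delta}{32\mu R}\}$ on $u$ and the target bound $\min\{\tfrac14,\frac{A\delta}{64\mu R}\}$ on $\ell$ is precisely absorbed by the two $\tau$-terms $\tfrac14$ and $\frac{A\delta}{64\mu R}$ built into the definition of $\tau$. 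I would also note in passing that the degenerate situation $\zeta(0)\ne+\infty$ is already handled by the earlier lemma (which then forces $g(x^{(1)})\le g(x^*)+\epsilon'$), so it does not interfere with the binary-search argument above.
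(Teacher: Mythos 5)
Your proof is correct and follows essentially the same route as the paper: the same chain $1 \geq \zeta(\theta) \geq \tfrac14 \cdot \tfrac{A}{\theta}\cdot \tfrac{c\delta}{8c\mu R}$ (combining $\zeta(\theta)\leq 1$, the hypothesis $\omega(\|y_\theta-\widetilde{x}_\theta\|_2)\|y_\theta-\widetilde{x}_\theta\|_2 \geq c\delta$, and the diameter bound of Lemma~\ref{lem:upperbound_z}), just rearranged. Your treatment of the \emph{in particular} part merely makes explicit what the paper leaves terse, namely that the bound is applied at $u$ (where $\zeta(u)\leq \tfrac34 \leq 1$ by the binary-search invariant, or the $c\delta$ alternative holds) and then transferred to $\ell$ using $u-\ell<\tau\leq\min\{\tfrac14,\tfrac{A\delta}{64\mu R}\}$.
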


\begin{proof}
Suppose that $\theta\leq\frac{1}{2}$, then we have from the assumption
\begin{align*}
1\geq\zeta(\theta) 
	& =\frac{(1-\theta)^{2}A}{\theta} \cdot \omega(\|y_{\theta}-\widetilde{x}_{\theta}\|_{2})
  	\geq\frac{1}{4}\cdot\frac{A}{\theta} \cdot \omega(\|y_{\theta}-\widetilde{x}_{\theta}\|_{2})\\
 	& \geq\frac{1}{4}\cdot\frac{A}{\theta}\frac{c\delta}{\|y_{\theta}-\widetilde{x}_{\theta}\|_{2}}
 	\geq\frac{1}{4}\cdot\frac{A}{\theta}\frac{c\delta}{8c\mu R}
\end{align*}
where we used Lemma \ref{lem:upperbound_z}. This gives the lower bound on $\theta$.
Since we stop the binary search when $|u-\ell|$ less than
$\frac{1}{2}\min \{  \frac{1}{2},\frac{A\delta}{32\mu R} \}$,
we have the lower bound on $\ell$.
\end{proof}

Now, we are ready to show the correctness of Algorithm \ref{alg:binary} with the assumed $\tau$.
\begin{theorem}[Correctness of Algorithm] \label{thm:correctness}
Assume $64(\alpha+\frac{1}{c})\gamma^2\leq1$. Algorithm \ref{alg:binary} outputs
either $y$ such that 
$g(y)\leq g^*+\epsilon$
or $y=y_{\theta}$ such that $\zeta(\theta) \in [1/2, 1]$
with
\[
\left\Vert \nabla g(y_{\theta})+\omega(\|y_{\theta}-\widetilde{x}_{\theta}\|_{2})\cdot(y_{\theta}-\widetilde{x}_{\theta})\right\Vert \leq\alpha\cdot\omega(\|y_{\theta}-\widetilde{x}_{\theta}\|_{2})\cdot\|y_{\theta}-\widetilde{x}_{\theta}\|_{2}+\delta
\]
where $\delta \leq \frac{1}{c}\omega(\|y_{\theta}-\widetilde{x}_{\theta}\|_{2})\cdot\|y_{\theta}-\widetilde{x}_{\theta}\|_{2}$.
\end{theorem}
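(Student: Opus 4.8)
The plan is to argue by cases on which of the three \textbf{return} statements of Algorithm~\ref{alg:binary} fires. If the first or the second one fires, the returned point $y_\theta$ (with $\theta=\ell$ or $\theta=u$) satisfies $\omega(\|y_\theta-\widetilde x_\theta\|_2)\cdot\|y_\theta-\widetilde x_\theta\|_2\le c\delta$ by the guard of that branch, and Lemma~\ref{lem:z_lower} immediately gives $g(y_\theta)\le g(x^*)+\epsilon=g^*+\epsilon$, which is the first alternative of the statement. So the real work is the final \textbf{else} branch, where the algorithm returns $y=y_\ell$ and, since neither guard fired, both $\omega(\|y_\ell-\widetilde x_\ell\|_2)\|y_\ell-\widetilde x_\ell\|_2>c\delta$ and $\omega(\|y_u-\widetilde x_u\|_2)\|y_u-\widetilde x_u\|_2>c\delta$. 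In this branch two of the three required conclusions are essentially free: the inequality $\delta\le\frac1c\,\omega(\|y_\ell-\widetilde x_\ell\|)\|y_\ell-\widetilde x_\ell\|$ is exactly the branch condition at $\ell$, and the proximal inequality is just Definition~\ref{def:prox_oracle} applied to $y_\ell=\oracleprox(\widetilde x_\ell)$. The only substantive claim left is $\zeta(\ell)\in[\tfrac12,1]$.

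For the lower bound I would first note that in this branch $\ell>0$: invoking Lemma~\ref{lem:lower_bound_theta} at $\theta=u$ — legitimate since $\zeta(u)<\tfrac34\le1$ and the branch condition holds at $u$ — forces $u\ge\min\{\tfrac12,\tfrac{A\delta}{32\mu R}\}$, and since $u-\ell<\tau$ with $\tau\le\tfrac14$ and $\tau\le\tfrac{A\delta}{64\mu R}$ this yields $\ell\ge\min\{\tfrac14,\tfrac{A\delta}{64\mu R}\}>0$. (If $\ell=0$, so $\zeta(\ell)=+\infty$, then the contrapositive of Lemma~\ref{lem:lower_bound_theta} forces $\omega(\|y_0-\widetilde x_0\|)\|y_0-\widetilde x_0\|\le c\delta$, so the first return branch would have fired and we would be in the first case.) Because $\ell>0$, the variable $\ell$ was updated at some iteration, so the binary-search invariant gives $\zeta(\ell)\ge\tfrac34\ge\tfrac12$.

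The heart of the proof is the upper bound $\zeta(\ell)\le1$, obtained by passing to the continuous surrogate $\zeta^*(\theta)=\lambda_\theta\,\omega(\|y_\theta^*-\widetilde x_\theta\|_2)$ from Section~\ref{subsec:exact_inexact_proximal}. From $64(\alpha+\tfrac1c)\gamma^2\le1$ one gets $\beta:=8(\alpha+\tfrac1c)\gamma\le\tfrac18$ and $\beta\gamma\le\tfrac18$, and then Lemma~\ref{lem:z_approximation} together with $\omega(\eta s)\le\eta^\gamma\omega(s)$ gives, at any point where $\omega(\cdot)\|\cdot\|>c\delta$, the comparisons $\zeta(\theta)\le(1-\beta)^{-\gamma}\zeta^*(\theta)$ and $\zeta^*(\theta)\le(1+\beta)^\gamma\zeta(\theta)$; applying these at $\ell$ and at $u$ respectively yields $\zeta(\ell)\le(1-\beta)^{-\gamma}\zeta^*(\ell)$ and $\zeta^*(u)\le(1+\beta)^\gamma\zeta(u)<\tfrac34(1+\beta)^\gamma$. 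Next I would bound the total variation of $\log\zeta^*$ on $[\ell,u]$ using Lemma~\ref{lem:lipschitz}. Lemmas~\ref{lem:upper_bound_theta} and \ref{lem:lower_bound_theta} confine $\ell,u$ to an interval on which $\tfrac1\theta$ and $\tfrac1{1-\theta}$ are controlled by the quantities appearing in $\tau$; for the $\tfrac{12\mu\gamma^2R}{\|y_\theta^*-\widetilde x_\theta\|}$ term I would combine the endpoint lower bound $\|y_\theta^*-\widetilde x_\theta\|\gtrsim c\delta/\omega(8c\mu R)$ (from Lemma~\ref{lem:z_approximation}, the branch condition, and $\|y_\theta-\widetilde x_\theta\|\le 8c\mu R$ of Lemma~\ref{lem:upperbound_z}) with the Lipschitz estimate $\|\tfrac{d}{d\theta}(y_\theta^*-\widetilde x_\theta)\|\le12\mu\gamma R$ of Lemma~\ref{lem:z_speed}: since $\tau\le\tfrac{c\delta}{360\mu\gamma R\,\omega(8c\mu R)}$, the displacement of $y_\theta^*-\widetilde x_\theta$ across $[\ell,u]$ is at most half its endpoint norm, so the same lower bound holds for every $\theta\in[\ell,u]$. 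Integrating Lemma~\ref{lem:lipschitz} over $[\ell,u]$ and using the last, catch-all term of $\tau$ then gives $|\log\zeta^*(\ell)-\log\zeta^*(u)|\le L\tau$ with $L\tau$ below a small absolute constant, so $\zeta^*(\ell)\le\zeta^*(u)\,e^{L\tau}$ and, chaining,
\[
\zeta(\ell)\;\le\;(1-\beta)^{-\gamma}\zeta^*(\ell)\;\le\;(1-\beta)^{-\gamma}\zeta^*(u)\,e^{L\tau}\;<\;\tfrac34\Big(\tfrac{1+\beta}{1-\beta}\Big)^{\gamma}e^{L\tau}\;\le\;1,
\]
using $\beta\gamma\le\tfrac18$ (so $(\tfrac{1+\beta}{1-\beta})^\gamma\le e^{1/4}$) and the bound on $L\tau$; equivalently one may run this as a contradiction: $\zeta(\ell)>1$ would give $\zeta^*(\ell)>(1-\beta)^\gamma>\tfrac78$, contradicting $\zeta^*(\ell)\le\zeta^*(u)e^{L\tau}<\tfrac34(1+\beta)^\gamma e^{L\tau}$. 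Together with $\zeta(\ell)\ge\tfrac34$ this gives $\zeta(\ell)\in[\tfrac12,1]$, completing the else branch and the theorem.

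The main obstacle is precisely this inequality $\zeta(\ell)\le1$. One must make the possibly-discontinuous $\zeta$ communicate with the continuous $\zeta^*$ through multiplicative losses $(1-\beta)^{\pm\gamma}$ that have to compound favorably with the $\tfrac34$ binary-search threshold — this is why the sharp bound $\beta\gamma\le\tfrac18$ is used rather than the rounded constants $\tfrac78,\tfrac54$ of Lemma~\ref{lem:zeta_approximation} — and one must propagate the ``not an approximate minimizer'' lower bound on $\|y_\theta^*-\widetilde x_\theta\|$ from the two endpoints to the whole short interval $[\ell,u]$. Both are possible only because $u-\ell<\tau$ and because $\tau$ is defined term by term to annihilate each dangerous contribution to the Lipschitz constant (the $\tfrac1\theta$, the $\tfrac1{1-\theta}$, and the $1/\|y_\theta^*-\widetilde x_\theta\|$ terms) as well as the drift of $y_\theta^*-\widetilde x_\theta$; carrying the constants through all of these estimates simultaneously is the bookkeeping-heavy part of the argument.
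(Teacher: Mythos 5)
Your proposal is correct and follows essentially the same route as the paper's own proof of Theorem~\ref{thm:correctness}: the same case split on the three return branches, Lemma~\ref{lem:z_lower} for the approximate-minimizer branches, and in the final branch the same chain through the continuous surrogate $\zeta^*$ via Lemmas~\ref{lem:zeta_approximation}, \ref{lem:lipschitz}, \ref{lem:z_speed}, \ref{lem:upper_bound_theta} and \ref{lem:lower_bound_theta}, with the lower bound on $\|y_\theta^*-\widetilde{x}_\theta\|$ propagated from the endpoint $u$ across the short interval $[\ell,u]$ exactly as in the paper. The only caveat is that your final numerical chaining inherits the same slack as the paper's own (the paper's product $\frac{8}{7}\,e^{1/10}\,\frac{5}{4}\,\frac{3}{4}\approx 1.18$ is not literally below $1$, and your sharper $(1\pm\beta)^{\pm\gamma}$ constants with $L\tau\le\frac{1}{10}$ still land slightly above $1$), so either write-up needs a mild tightening of the constant in $\tau$; this is a shared cosmetic issue, not a gap specific to your argument.
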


\begin{proof}
For the case $\omega(\|y_{\ell}-\widetilde{x}_{\ell}\|_{2})\cdot\|y_{\ell}-\widetilde{x}_{\ell}\|_{2}\leq c\cdot \delta$
and $\omega(\|y_{u}-\widetilde{x}_{u}\|_{2})\cdot\|y_{u}-\widetilde{x}_{u}\|_{2}\leq c\cdot \delta$,
Lemma \ref{lem:z_lower} shows that $g(y)\leq g^*+\epsilon$.

Otherwise, Lemma \ref{lem:upper_bound_theta} and Lemma \ref{lem:lower_bound_theta}
show that 
\begin{equation}
\ell\geq\min\left\{\frac{1}{4},\frac{A\delta}{64\mu R}\right\}\label{eq:ell_lower}
\end{equation}
and 
\begin{equation}
u\leq\max\left\{\frac{3}{4},1-\frac{1}{2}\sqrt{\frac{1}{4}\frac{1}{A\cdot\omega(8c\mu R)}}\right\}\, . \label{eq:u_upper}
\end{equation}
Therefore, together with Lemma \ref{lem:lipschitz} we have
\begin{align}
\left|\frac{d}{d\theta}\log\zeta^{*}(\theta)\right| & \leq\frac{2}{1-\theta}+\frac{1}{\theta}+\frac{12\mu\gamma^2 R}{\|y_{\theta}^{*}-\widetilde{x}_{\theta}\|_{2}}\nonumber \\
 & \leq12+4\sqrt{4A\cdot\omega(8c\mu R)}+\frac{64\mu R}{A\delta}+\frac{12\mu\gamma^2 R}{\|y_{\theta}^{*}-\widetilde{x}_{\theta}\|_{2}}\label{eq:zetaLip}
\end{align}
for all $\ell\leq\theta\leq u$.
To bound the term $\|y_{\theta}^{*}-\widetilde{x}_{\theta}\|_{2}$, note from Lemma \ref{lem:zeta_approximation} we have
\begin{equation}
\frac{8}{7}\|y_{u}^{*}-\widetilde{x}_{u}\|_{2} \geq \|y_{u}-\widetilde{x}_{u}\|_{2}\geq\frac{c\delta}{\omega(\|y_{u}-\widetilde{x}_{u}\|_{2})}.\label{eq:zu_lower}
\end{equation}
Using $\frac{3}{4}\geq\zeta(u)$ (due to binary search),
we have
\[
\frac{3}{4}\geq\zeta(u)=\frac{(1-u)^{2}A}{u}\omega(\|y_{u}-\widetilde{x}_{u}\|_{2})\geq(1-u)^{2}A\omega(\|y_{u}-\widetilde{x}_{u}\|_{2}).
\]
Putting it into \eqref{eq:zu_lower} gives
\[
\|y_{u}^{*}-\widetilde{x}_{u}\|_{2}\geq \frac{28c\delta (1-u)^2A}{24} \geq \frac{7c\delta A}{6}\frac{1}{16 A\cdot \omega(8c\mu R)}\geq \frac{c\delta}{15 \cdot \omega(8c\mu R)}
\]
where we used \eqref{eq:u_upper} for the last inequality. Lemma \ref{lem:z_speed}
shows that 
\[
\left\Vert \frac{d}{d\theta}(y_{\theta}^{*}-\widetilde{x}_{\theta})\right\Vert \leq12 \mu\gamma R.
\]
Since we have from the stopping criteria $\tau = |u-\ell|\leq\frac{c\delta}{360\mu\gamma R\cdot\omega(8c\mu R)}$,
for all $\ell\leq\theta\leq u$, this gives
\[
\|y_{\theta}^{*}-\widetilde{x}_{\theta}\|_{2}\geq\|y_{u}^{*}-\widetilde{x}_{u}\|_{2}-12\mu\gamma R\cdot\tau\geq\frac{c\delta}{30\omega(8c\mu R)}.
\]

Put together with \eqref{eq:zetaLip} we have
\begin{align*}
\left|\frac{d}{d\theta}\log\zeta^{*}(\theta)\right| & \leq12+8\sqrt{A\cdot\omega(8c\mu R)}+\frac{64\mu R}{A\delta}+\frac{360\mu\gamma^{2}R\omega(8c\mu R)}{c\delta}\\
 & \leq20+20A\cdot\omega(8c\mu R)+\frac{64\mu R}{A\delta}+\frac{6\mu R\omega(8c\mu R)}{\delta}\\
 & \leq20\left(1+A\cdot\omega(8c\mu R)+\frac{4\mu R}{A\delta}+\frac{\mu R}{\delta}\cdot\omega(8c\mu R)\right)
\end{align*}
where we used $64(\alpha+\frac{1}{c})\gamma^2\leq1$ and $\alpha\leq1$. Due to
the choice of $\tau \leq \frac{1}{200\left(1+A\cdot\omega(8c\mu R)+\frac{4\mu R}{A\delta}+\frac{\mu R}{\delta}\cdot\omega(8c\mu R)\right)}$, this shows that $\zeta^{*}(\ell)\leq e^{\frac{1}{10}}\zeta^{*}(u)$.
Now, using Lemma \ref{lem:zeta_approximation}, we have
\[
\zeta(\ell)\leq \frac{8}{7}\zeta^{*}(\ell)\leq \frac{8}{7}e^{\frac{1}{10}}\zeta^{*}(u)\leq \frac{8}{7}e^{\frac{1}{10}} \frac{5}{4}\cdot \zeta(u)\leq \frac{8}{7}e^{\frac{1}{10}}\frac{5}{4}\frac{3}{4}\leq 1.
\]
Moreover, by the definition of binary search, we know $\zeta(\ell)\geq\frac{3}{4}$.
This completes the proof that we have found a point satisfying $\frac{1}{2}\leq\zeta(\theta)\leq 1.$
\end{proof}

\subsection{Bounding the number of steps}

To bound the number of steps, we need to have a lower and upper bound
on $A$. We note that when we apply the line search procedure, we have $A=A_{k}$ at iteration $k$.
Furthermore, we assume $k\geq1$ because no line search is needed for $k=0$. Under the assumption, we have $\frac{1}{2\omega(2\mu R)}\leq A\leq\frac{R^2}{\epsilon}$. Below we give the proof of the main theorem for the line search implementation. 

\begin{proof}[Proof of Theorem~\ref{thm:main_line_search}]
Recall from the algorithm description, we set
\begin{align*}
\frac{1}{\tau} & \leq4+2\sqrt{4A\cdot\omega(8c\mu R)}+\frac{64\mu  R}{A\delta}+\frac{360\mu\gamma R\cdot\omega(8c\mu R)}{c\delta}\\
&\quad \quad +200\left(1+A\cdot\omega(8c\mu R)+\frac{4\mu R}{A\delta}+\frac{\mu R}{\delta}\cdot\omega(8c\mu R)\right)\\
 & \leq300\left(1+A\cdot\omega(8c\mu R)+\frac{4\mu R}{A\delta}+\frac{\mu R}{\delta}\cdot\omega(8c\mu R)\right)
\end{align*}
where we used $16(\alpha+\frac{1}{c})\gamma\leq1$. Now using $\frac{1}{2\omega(2\mu R)}\leq A\leq\frac{R^2}{\epsilon}$ from the assumption we get
\begin{align*}
\frac{1}{\tau}
\leq 300\left(1+\Big(\frac{R^2}{\epsilon}+\frac{9\mu R}{\delta}\Big)\cdot\omega(8c\mu R)\right)
\leq 40\Big[\frac{160\mu R c}{\delta}+\frac{9R^2}{\epsilon}\Big]\cdot \omega(8c\mu R) 
\end{align*}
where we used $\delta\leq8\mu R\cdot\omega(8\mu R)$ at the end. Putting together with Theorem~\ref{thm:correctness} yields the result.
\end{proof}

\end{document}